   \theoremstyle{plain}
   \newtheorem{thm}{Theorem}[section]
   \newtheorem{prop}[thm]{Proposition}
   \newtheorem{lemma}[thm]{Lemma}  
   \newtheorem{cor}[thm]{Corollary}
   \theoremstyle{defn}
    \newtheorem{defn}[thm]{Definition}
   \theoremstyle{remark}
   \newtheorem{obs}[thm]{Observation}
   \newtheorem{remark}[thm]{Remark}
   \newtheorem{example}[thm]{Example}
   \numberwithin{equation}{section}
\author{Johannes Christensen}
\email{johannes@math.au.dk}
\address{Institut for Matematik, Aarhus University, Ny Munkegade, 8000 Aarhus C, Denmark}
\title{Symmetries of the KMS simplex}
\begin{document}
\begin{titlepage} 
	\newcommand{\HRule}{\rule{\linewidth}{0.5mm}} 
	
	\center 
	
	
	\textsc{\LARGE Symmetries of the KMS simplex}\\[1.5cm] 
	
	\textsc{\Large Author: \\ 
Johannes Christensen}\\[0.5cm] 
	
	\textsc{\large Affiliation: \\
Institut for Matematik, Aarhus University}\\[0.5cm] 

\textsc{ Address: \\
Ny Munkegade, 8000 Aarhus C, Denmark}\\[0.5cm] 

\textsc{ Mail: \\
johannes@math.au.dk}\\[0.5cm] 

\textsc{ Telephone number: \\
+4587155723}\\[0.5cm]

	\HRule\\[1.5cm]

	{\large \bfseries Abstract}\\[0.4cm] 
	
A continuous groupoid homomorphism $c$ on a locally compact second countable Hausdorff \'etale groupoid $\mathcal{G}$ gives rise to a $C^{*}$-dynamical system in which every $\beta$-KMS state can be associated to a $e^{-\beta c}$-quasi-invariant measure $\mu$ on $\mathcal{G}^{(0)}$. Letting $\Delta_{\mu}$ denote the set of KMS states associated to such a $\mu$, we will prove that $\Delta_{\mu}$ is a simplex for a large class of groupoids, and we will show that there is an abelian group that acts transitively and freely on the extremal points of $\Delta_{\mu}$. This abelian group can be described using the support of $\mu$, so our theory can be used to obtain a description of all KMS states by describing the $e^{-\beta c}$-quasi-invariant measures. To illustrate this we will describe the KMS states for the Cuntz-Krieger algebras of all finite higher rank graphs without sources and a large class of continuous one-parameter groups.

	\HRule\\[1.5cm]
	
	\vfill\vfill\vfill 
	
	{\large\today} 
	
	
	 
	
	\vfill 
	
\end{titlepage}

\maketitle

\section{Introduction}
In recent years there has been a great deal of interest in describing KMS states for $C^{*}$-dynamical systems and many articles have been written about the subject. Often the $C^{*}$-dynamical systems investigated are given as a pair consisting of a groupoid $C^{*}$-algebra and a continuous one-parameter group arising from a continuous groupoid homomorphism. This is also the case for the articles about KMS states on $C^{*}$-algebras of higher rank graphs that have appeared the last several years, e.g. \cite{aHKR}, \cite{aHKR2}, \cite{aHLRS1} and \cite{aHLRS}. In \cite{aHLRS} the authors come to the conclusion that the simplex of KMS states for the $C^{*}$-dynamical systems they consider is "highly symmetric" in the sense that there is an abelian group that acts transitively and freely on the extremal points of the simplex. Inspired by this, the main purpose of this article is to investigate such symmetries using the groupoid picture of these $C^{*}$-algebras. We will do this by proving that the simplex of KMS states is symmetric for a large class of groupoid $C^{*}$-algebras and one-parameter groups given by continuous groupoid homomorphisms.

We will consider locally compact second countable Hausdorff \'etale groupoids $\mathcal{G}$ equipped with continuous homomorphisms $\Phi: \mathcal{G} \to A$ taking values in discrete abelian groups such that $\ker(\Phi) \cap \mathcal{G}_{x}^{x}=\{x\}$ for all $x \in \mathcal{G}^{(0)}$. Building on work of Renault, Neshveyev has described a bijection between the $\beta$-KMS states for one-parameter groups arising from a continuous groupoid homomorphism $c: \mathcal{G} \to \mathbb{R}$, and pairs consisting of a $e^{-\beta c}$-quasi-invariant probability measure $\mu$ on $\mathcal{G}^{(0)}$ and a specific kind of $\mu$-measurable field. Our main theorem describes how each $e^{-\beta c}$-quasi-invariant probability measure $\mu$ gives rise to a simplex $\Delta_{\mu}$ of KMS states associated to $\mu$, and how there for each $\mu$ is a subgroup $B$ of $A$ with the dual $\hat{B}$ of $B$ acting transitively and freely on the extremal points of $\Delta_{\mu}$. When there is only one $e^{-\beta c}$-quasi-invariant probability measure $\mu$ on $\mathcal{G}^{(0)}$ then $\Delta_{\mu}$ is the set of KMS states, and then our theorem implies that $\hat{B}$ acts transitively and freely on the extremal points of the simplex of KMS states.

The subgroup $B$ whose dual acts on the extreme points of $\Delta_{\mu}$ has a very concrete description involving the support of the measure $\mu$. This opens up the possibility of using these symmetries to describe the simplex of KMS states in cases where our methods so far have fallen short. The description we obtain has a precursor in Corollary 2.4 in \cite{N} and in the description in section 12 of \cite{aHLRS}, and with the theory developed it becomes possible to determine all the extremal KMS states by determining the $e^{-\beta c}$-quasi-invariant probability measures. We believe that this makes our main theorem a useful tool for giving concrete descriptions of KMS states on the groupoids under consideration. To illustrate this point, we will use the theorem to describe the KMS states for all Cuntz-Krieger algebras of finite higher rank $k$-graphs without sources and all continuous one-parameter groups obtained by taking an $r \in \mathbb{R}^{k}$ and mapping $\mathbb{R}$ into $\mathbb{T}^{k}$ by $t \to (e^{itr_{1}}, \dots, e^{itr_{k}})$ and composing with the gauge-action. This generalises Theorem 7.1 in \cite{aHLRS} where the KMS simplex for such actions is described for all strongly connected higher rank graphs.

\section{Notation and setting}
\subsection{$C^{*}$-dynamical systems.}
A $C^{*}$-dynamical system is a triple $(\mathcal{A}, \alpha, G)$ where $\mathcal{A}$ is a $C^{*}$-algebra, $G$ is a locally compact group and $\alpha$ is a strongly continuous representation of $G$ in $\text{Aut}(\mathcal{A})$. To ease notation we will denote the systems where $G=\mathbb{R}$ as $(\mathcal{A}, \alpha)$, in which case we call $\alpha=\{\alpha_{t}\}_{t \in \mathbb{R}}$ a continuous one-parameter group. For a $C^{*}$-dynamical system $(\mathcal{A}, \{\alpha_{t}\}_{t \in \mathbb{R}})$ and a $\beta \in \mathbb{R}$, a \emph{$\beta$-KMS state for $\alpha$} or a \emph{$\alpha$-KMS$_{\beta}$ state} is a state $\omega$ on $\mathcal{A}$ satisfying:
$$
\omega( x y) = \omega (y \alpha_{i \beta}(x)) 
$$
for all elements $x,y$ in a norm dense, $\alpha$-invariant $*$-algebra of entire analytic elements of $\alpha$, c.f. Definition 5.3.1 in \cite{BR}. The definition is independent of choice of norm dense, $\alpha$-invariant $*$-algebra of entire analytic elements. When there can be no confusion as to which $C^{*}$-dynamical system and $\beta \in \mathbb{R}$ we work with, we will denote the set of KMS states by $\Delta$. This is a simplex for unital $C^{*}$-algebras, and hence we can consider extremal KMS states, the set of which we will denote by $\partial \Delta$. In general when dealing with a compact and convex set $C$ in a locally convex topological vector space, we will use $\partial C$ to denote the extremal points of $C$.

\subsection{Groupoid $C^{*}$-algebras.}
Let $\mathcal{G}$ be a locally compact second countable Hausdorff \'etale groupoid with unit space $\mathcal{G}^{(0)}$ and range and source maps $r,s: \mathcal{G} \to \mathcal{G}^{(0)}$. Since $\mathcal{G}$ is \'etale $r$ and $s$ are local homeomorphisms, and we call an open set $W \subseteq \mathcal{G}$ a bisection when $r(W)$ and $s(W)$ are open and the maps $r|_{W}: W \to r(W)$ and $s|_{W}: W \to s(W)$ are homeomorphisms. For $x \in \mathcal{G}^{(0)}$ we set $\mathcal{G}_{x}:=s^{-1}(x)$ and $\mathcal{G}^{x}:=r^{-1}(x)$. The isotropy group at $x$ is then the set $\mathcal{G}_{x} \cap \mathcal{G}^{x}$ which we denote by $\mathcal{G}_{x}^{x}$. Let $C_{c}(\mathcal{G})$ denote the space of compactly supported continuous functions on $\mathcal{G}$. We can make this space into a $*$-algebra by defining a product:
$$
(f_{1}* f_{2}) (g)= \sum_{h \in \mathcal{G}^{r(g)}} f_{1}(h) f_{2}(h^{-1} g) \qquad \forall g \in \mathcal{G}
$$
and an involution by $f^{*}(g)=\overline{f(g^{-1})}$ for all $g \in \mathcal{G}$. When completing $C_{c}(\mathcal{G})$ in the full norm, see Definition 1.12 in chapter II of \cite{Re}, we obtain the full groupoid $C^{*}$-algebra $C^{*}(\mathcal{G})$. Since $\mathcal{G}$ is second countable it follows that $C^{*}(\mathcal{G})$ is separable. The full norm has the property that the map $C_{c}(\mathcal{G}) \to C_{c}(\mathcal{G}^{(0)})$ which restricts functions to $\mathcal{G}^{(0)}$ extends to a conditional expectation $P : C^{*}(\mathcal{G}) \to C_{0}(\mathcal{G}^{(0)})$.

Taking a continuous groupoid homomorphism $c:\mathcal{G}\to \mathbb{R}$, i.e. a continuous function $c: \mathcal{G} \to \mathbb{R}$ with $c(gh)=c(g)+c(h)$ when $s(g)=r(h)$, then for each $t\in \mathbb{R}$ we can define an automorphism $\alpha^{c}_{t}$ of $C_{c}(\mathcal{G})$ by setting:
\begin{equation*} 
\alpha_{t}^{c}(f)(g)=e^{it c(g)}f(g) \qquad \forall g \in \mathcal{G} .
\end{equation*}
The map $\alpha^{c}_{t}$ then extends to an automorphism of $C^{*}(\mathcal{G})$, and $\{ \alpha_{t}^{c}\}_{t \in \mathbb{R}}$ becomes a continuous one-parameter group. For the $C^{*}$-dynamical system $(C^{*}(\mathcal{G}), \{ \alpha_{t}^{c}\}_{t \in \mathbb{R}})$ the $*$-algebra $C_{c}(\mathcal{G})$ is norm-dense, $\alpha^{c}$-invariant and consists of entire analytic elements for $\alpha^{c}$, so it is sufficient to check the KMS condition on elements in $C_{c}(\mathcal{G})$. 

\subsection{Neshveyevs Theorem.}
In Theorem 1.3 in \cite{N} Neshveyev provides a useful description of KMS states which we will outline in the following. For a continuous groupoid homomorphism $c: \mathcal{G} \to \mathbb{R}$ on a locally compact second countable Hausdorff \'etale groupoid $\mathcal{G}$, we say that a finite Borel measure $\mu$ on $\mathcal{G}^{(0)}$ is \emph{$e^{-\beta c}$-quasi-invariant} for some $\beta \in \mathbb{R}\setminus\{0\}$, if for every open bisection $W$ of $\mathcal{G}$ we have:
$$
\mu(s(W))=\int_{r(W)} e^{\beta c\left(r_{W}^{-1}(x)\right)}\ d \mu (x)
$$
where $r_{W}^{-1}$ is the inverse of $r_{W}:W\to r(W)$. In the terminology used in \cite{N} these measures are called quasi-invariant with Radon-Nikodym cocycle $e^{-\beta c}$. We will need the following observation about these measures: If $\mu$ is an $e^{-\beta c}$-quasi-invariant measure on $\mathcal{G}^{(0)}$ and $E \subseteq \mathcal{G}^{(0)}$ is an invariant Borel set, i.e. $r(s^{-1}(E))=E=s(r^{-1}(E))$, then the Borel measure $\mu_{E}$ given by $\mu_{E}(\mathcal{B}):=\mu(E \cap \mathcal{B})$ is a $e^{-\beta c}$-quasi-invariant measure. For a proof of this we refer the reader to the proof of Lemma 2.2 in \cite{Th2}. 

Let $\mu$ be a $e^{-\beta c}$-quasi-invariant measure. We say that a collection $\{\varphi_{x}\}_{x \in \mathcal{G}^{(0)}}$ consisting for each $x \in \mathcal{G}^{(0)}$ of a state $\varphi_{x}$ on $C^{*}(\mathcal{G}_{x}^{x})$ is a $\mu$-measurable field if for each $f \in C_{c}(\mathcal{G})$ the function:
$$
\mathcal{G}^{(0)} \ni x \to \sum_{g \in \mathcal{G}_{x}^{x}} f(g) \varphi_{x}(u_{g})
$$
is $\mu$-measurable, where $u_{g}$, $g \in \mathcal{G}_{x}^{x}$, denotes the canonical unitary generators of $C^{*}(\mathcal{G}_{x}^{x})$. We do not distinguish between $\mu$-measurable fields which agree for $\mu$-a.e. $x \in \mathcal{G}^{(0)}$. For any $\beta \in \mathbb{R}\setminus \{0\}$ Neshveyevs Theorem establishes a bijection between the $\beta$-KMS states for $\alpha^{c}$ on $C^{*}(\mathcal{G})$ and the pairs $(\mu, \{\varphi_{x}\}_{x \in \mathcal{G}^{(0)}})$ consisting of a $e^{-\beta c}$-quasi-invariant Borel probability measure $\mu$ on $\mathcal{G}^{(0)}$ and a $\mu$-measurable field of states $\{\varphi_{x}\}_{x \in \mathcal{G}^{(0)}}$ satisfying:
\begin{equation}\label{htn}
\varphi_{x}(u_{g})=\varphi_{r(h)} (u_{hgh^{-1}}) \quad \text{ for } \mu \text{-a.e } x \in \mathcal{G}^{(0)} \text{ and all } g \in \mathcal{G}_{x}^{x} \text{ and }  h \in \mathcal{G}_{x} .
\end{equation}
The KMS state $\omega$ corresponding to $(\mu, \{\varphi_{x}\}_{x \in \mathcal{G}^{(0)}})$ satisfies:
$$
\omega(f)= \int_{\mathcal{G}^{(0)}} \sum_{g \in \mathcal{G}_{x}^{x}} f(g) \varphi_{x}(u_{g}) \ d\mu(x) \qquad \forall f \in C_{c}(\mathcal{G}) .
$$

\subsection{Duality of abelian groups.}
For any locally compact abelian group $A$ we let $\widehat{A}$ denote the dual of $A$, which is the set of continuous characters $\xi :A \to \mathbb{T}$. Setting $(\xi_{1}\xi_{2})(a)=\xi_{1}(a)\xi_{2}(a)$ and $\xi^{-1}(a)=\overline{\xi(a)}$ for $a \in A$ defines a composition and inversion on $\hat{A}$ making it a group with the constant function $1$ as the unit. Using the compact-open topology $\hat{A}$ becomes a locally compact abelian group. In this article we will consider abelian groups $A$ that are discrete and countable, and then the compact-open topology on $\hat{A}$ is the topology of pointwise convergence, and $\hat{A}$ is compact with this topology. For any locally compact abelian group $A$ we have the identification $\widehat{(\widehat{A})}\simeq A$, and for any closed subgroup $H$ of $A$, defining the \emph{annihilator} $H^{\perp}$ as:
$$
H^{\perp}=\{ \xi \in \widehat{A}\ | \ \xi(h)=1 \text{ for all } h \in H\}
$$
we also have that $(H^{\perp})^{\perp}=H$, c.f. Lemma 2.1.3 in \cite{Ru}. When there can be no confusion about which group $A$ we work with, we denote its unit by $e_{0}$.

\subsection{Groupoids admitting an abelian valued homomorphism}
We will throughout this paper consider the following groupoids. 
\begin{defn} \label{d2}
We say that a groupoid $\mathcal{G}$ admits an abelian valued homomorphism $\Phi: \mathcal{G} \to A$, if $\mathcal{G}$ is a locally compact second countable Hausdroff \'etale groupoid with compact unit space $\mathcal{G}^{(0)}$, $A$ is some countable discrete abelian group and $\Phi:\mathcal{G} \to A$ is a continuous homomorphism such that $\ker(\Phi) \cap \mathcal{G}_{x}^{x} = \{x\}$ for all $x \in \mathcal{G}^{(0)}$. 
\end{defn}

\begin{remark}
For the rest of this paper all groupoids $\mathcal{G}$ will satisfy Definition \ref{d2} for some discrete countable abelian group $A$ and continuous homomorphism $\Phi: \mathcal{G} \to A$.
\end{remark}

It follows from Proposition 5.1 in chapter II in \cite{Re} that for a groupoid $\mathcal{G}$ that satisfies Definition \ref{d2}, there is an automorphism $\Psi_{\xi} \in \text{Aut}(C^{*}(\mathcal{G}))$ for every $\xi \in \hat{A}$ satisfying:
\begin{equation}\label{e2}
\Psi_{\xi}(f)(g)=\xi(\Phi(g))f(g) \quad \forall g\in \mathcal{G}
\end{equation}
whenever $f \in C_{c}(\mathcal{G} )$. Letting $\Psi: \xi\to \Psi_{\xi}$ then $(C^{*}(\mathcal{G}), \Psi , \hat{A})$ is a $C^{*}$-dynamical system. When there can be no doubt about which group $A$ we consider, we will often denote this action as the \emph{gauge-action}.

\begin{example}\label{ex22}
Let $(\Lambda , d)$ be a compactly aligned topological $k$-graph for some $k \in \mathbb{N}$, see e.g. \cite{Yeend}. Using $(\Lambda ,d)$ one can define a space of paths $X_{\Lambda}$ and for each $m \in \mathbb{N}^{k}$ a map $\sigma^{m}$ on $\{x\in X_{\Lambda} \ | \ d(x)\geq m\}$ and thereby obtain a groupoid:
\begin{align*}
G_{\Lambda}=\{(x,m,y)\in X_{\Lambda} \times \mathbb{Z}^{k} \times X_{\Lambda} \ | \ &  \exists p,q \in \mathbb{N}^{k} \text{ with } p \leq d(x),  \\
&q\leq d(y), \  p-q=m \text{ and } \sigma^{p}(x)=\sigma^{q}(y)  \}
\end{align*}
with composition $(x,m,y)(y,n,z)=(x,m+n,z)$, c.f. Definition 3.4 in \cite{Yeend}. Using Proposition 3.6 and Theorem 3.16 in \cite{Yeend} we can equip $G_{\Lambda}$ with a topology such that the homomorphism $G_{\Lambda}\ni (x,m,y) \to m \in \mathbb{Z}^{k}$ becomes continuous and $G_{\Lambda}$ satisfies Definition \ref{d2} when $X_{\Lambda}$ is compact. So the groupoid for the Toeplitz algebra of a compactly aligned topological $k$-graph with compact unit space satisfies Definition \ref{d2}. Since the groupoid for the Cuntz-Krieger algebra for a compactly aligned topological $k$-graph is a reduction of $G_{\Lambda}$, it also satisfies Definiton \ref{d2} when it has a compact unit space. This provides us with a lot of examples, see e.g. the ones listed in Example 7.1 in \cite{Yeend} where the unit space is compact. Most importantly for the content of this article, it implies that all groupoids of Cuntz-Krieger algebras of finite higher rank graphs without sources satisfy the criterion.
\end{example}

\begin{example}
Let $X$ be a compact second countable Hausdorff space and $A$ a countable abelian group, and denote by $\text{End}(X)$ the semigroup of surjective local homeomorphisms from $X$ to $X$. Let $P$ be a subsemigroup of $A$ cointaining the unit $e_{0}$ of $A$ with $PP^{-1}=P^{-1}P=A$, and let $\theta$ be a right action of $P$ on $X$ in the sense that $\theta:P \to \text{End}(X)$ satisfies $\theta_{e_{0}}=\text{id}_{X}$ and $\theta_{nm}=\theta_{m}\theta_{n}(=\theta_{n}\theta_{m})$ for all $n, m \in P$. Proposition 3.1 in \cite{ER} then informs us that:
$$
\mathcal{G}=\left\{ (x,g,y) \in X \times A \times X \ | \ \exists n,m \in P, \ g=nm^{-1}, \ \theta_{n}(x)=\theta_{m}(y)  \right\}
$$
is a groupoid with composition $(x,a,y)(y,b,z)=(x,ab,z)$. In Proposition 3.2 in \cite{ER} the authors define a topology that makes $\mathcal{G}$ a locally compact \'etale groupoid which is second countable and Hausdorff since $X$ is. The topology furthermore makes the homomorphism $\mathcal{G} \ni (x,a,y) \to a \in A$ continuous, so since $\mathcal{G}^{(0)} \simeq X$ is compact $\mathcal{G}$ satisfies Definition \ref{d2}. 
\end{example}

\section{The gauge-action and KMS states}
For this section we fix a groupoid $\mathcal{G}$ with an abelian valued homomorphism $\Phi: \mathcal{G} \to A$ as in Definition \ref{d2}. To prove our main theorem about symmetries in the KMS simplex, we first need a tool to control the size of the simplex. The purpose of this section is to show how there is an interplay between the gauge-action and the KMS states, and then use this interplay to gain some control over the size of the set of extremal KMS states

\begin{lemma} \label{l320}
Let $c :\mathcal{G} \to \mathbb{R}$ be a continuous groupoid homomorphism and $\beta \in \mathbb{R}$. Assume that $\omega$ is a $\alpha^{c}$-KMS$_{\beta}$ state on $C^{*}(\mathcal{G})$ satisfying $\omega \circ \Psi_{\xi}=\omega$ for all $\xi \in \hat{A}$. Then $\omega=\omega \circ P$.
\end{lemma}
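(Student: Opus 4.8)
The plan is to feed $\omega$ into Neshveyev's description of KMS states, translate the gauge invariance into a statement about the measure attached to $\omega$, and then average over $\widehat{A}$ using orthogonality of characters; the defining property $\ker(\Phi)\cap\mathcal{G}_x^x=\{x\}$ is exactly what makes the average collapse onto the unit space. Assume first $\beta\neq 0$. By Neshveyev's theorem there is an $e^{-\beta c}$-quasi-invariant probability measure $\mu$ on $\mathcal{G}^{(0)}$ and a $\mu$-measurable field of states $\{\varphi_x\}_{x\in\mathcal{G}^{(0)}}$ with
\[
\omega(f)=\int_{\mathcal{G}^{(0)}}\sum_{g\in\mathcal{G}_x^x}f(g)\,\varphi_x(u_g)\ d\mu(x)\qquad\forall f\in C_c(\mathcal{G}).
\]
Since $P(f)$ is the function on $\mathcal{G}$ which agrees with $f$ on $\mathcal{G}^{(0)}$ and vanishes elsewhere, and $x$ is the only element of $\mathcal{G}_x^x$ lying in $\mathcal{G}^{(0)}$ (with $u_x$ the unit of $C^*(\mathcal{G}_x^x)$, so $\varphi_x(u_x)=1$), the same formula gives $\omega(P(f))=\int_{\mathcal{G}^{(0)}}f(x)\,d\mu(x)$. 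So it suffices to show $\omega(f)=\int_{\mathcal{G}^{(0)}}f(x)\,d\mu(x)$ for every $f\in C_c(\mathcal{G})$.

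Fix such an $f$. For every $\xi\in\widehat{A}$ the function $\Psi_\xi(f)$ again lies in $C_c(\mathcal{G})$, so Neshveyev's formula applies to it and, using the hypothesis $\omega\circ\Psi_\xi=\omega$ together with \eqref{e2}, gives
\[
\omega(f)=\omega(\Psi_\xi(f))=\int_{\mathcal{G}^{(0)}}\sum_{g\in\mathcal{G}_x^x}\xi(\Phi(g))\,f(g)\,\varphi_x(u_g)\ d\mu(x).
\]
Now I would integrate both sides against the normalized Haar measure of the compact group $\widehat{A}$; the left-hand side is unchanged. On the right-hand side I would apply Fubini's theorem: for each $x$ the sum over $\mathcal{G}_x^x$ is finite (as $\mathcal{G}_x^x$ is discrete and $\operatorname{supp}(f)$ is compact), and the integrand is dominated by $\sup_{x\in\mathcal{G}^{(0)}}\sum_{g\in\mathcal{G}^x}|f(g)|<\infty$, so one may pull the $\xi$-integral inside the sum and obtain the factor $\int_{\widehat{A}}\xi(\Phi(g))\,d\xi$, which by orthogonality of characters equals $1$ if $\Phi(g)=e_0$ and $0$ otherwise. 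Hence only the terms with $g\in\ker(\Phi)\cap\mathcal{G}_x^x$ survive; but $\ker(\Phi)\cap\mathcal{G}_x^x=\{x\}$ by Definition \ref{d2}, so the inner sum collapses to $f(x)\varphi_x(u_x)=f(x)$ and we are left with $\omega(f)=\int_{\mathcal{G}^{(0)}}f(x)\,d\mu(x)$. Combined with the previous paragraph this yields $\omega(f)=\omega(P(f))$ for all $f\in C_c(\mathcal{G})$, and since $C_c(\mathcal{G})$ is norm dense in $C^*(\mathcal{G})$ and both $\omega$ and $\omega\circ P$ are norm continuous, $\omega=\omega\circ P$. The case $\beta=0$ is handled the same way, replacing Neshveyev's theorem by the analogous description of tracial states in terms of an invariant probability measure and a measurable field (alternatively, one averages $\omega$ against $\Psi$ to reduce to the clopen principal subgroupoid $\ker(\Phi)$, on which every KMS state, traces included, is integration against its quasi-invariant measure).

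The only genuine obstacle is the measure-theoretic bookkeeping in the averaging step: one must check that $x\mapsto\sum_{g\in\mathcal{G}_x^x}\xi(\Phi(g))f(g)\varphi_x(u_g)$ is a legitimate input for Neshveyev's formula — i.e. that $\Psi_\xi(f)\in C_c(\mathcal{G})$, so that this really is a $\mu$-measurable function of $x$ — and that as a function of the pair $(x,\xi)$ it is jointly measurable and bounded, so that Fubini on $\mathcal{G}^{(0)}\times\widehat{A}$ and the interchange of the $\xi$-integral with the finite sum over $\mathcal{G}_x^x$ are both justified. Everything else is routine manipulation of the conditional expectation $P$ and of the orthogonality relations $\int_{\widehat{A}}\xi(a)\,d\xi=\delta_{a,e_0}$.
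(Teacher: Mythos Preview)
Your argument is correct, but it proceeds quite differently from the paper. The paper does not invoke Neshveyev's theorem at all for this lemma; instead it partitions $\mathcal{G}$ into the clopen pieces $\Phi^{-1}(\{a\})$ for $a\neq e_0$, the unit space $\mathcal{G}^{(0)}$, and the remainder $\Phi^{-1}(\{e_0\})\setminus\mathcal{G}^{(0)}$, and treats functions supported on each piece separately. For $a\neq e_0$ the paper uses the same Haar-averaging and orthogonality of characters that you do. For the piece $\Phi^{-1}(\{e_0\})\setminus\mathcal{G}^{(0)}$, the condition $\ker(\Phi)\cap\mathcal{G}_x^x=\{x\}$ forces $r(g)\neq s(g)$, so one may localize $f$ to an open set with $\overline{r(U)}\cap\overline{s(U)}=\emptyset$, choose $h\in C_c(\mathcal{G}^{(0)})$ with $hf=f$ and $fh=0$, and then the KMS relation (with $h$ fixed by $\alpha^c$) gives $\omega(f)=\omega(hf)=\omega(fh)=0$ directly.

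The trade-offs: the paper's argument is self-contained, uses the KMS condition only through the elementary commutation $\omega(hf)=\omega(fh)$, and works uniformly for every $\beta$ including $\beta=0$ without any case split. Your route is shorter once Neshveyev's theorem is granted, since the structure result already packages the vanishing on $\Phi^{-1}(\{e_0\})\setminus\mathcal{G}^{(0)}$ into the formula's sum over isotropy; the Haar average then dispatches the whole inner sum at once. The cost is the dependence on an external structure theorem and the slightly awkward handling of $\beta=0$, which in the paper's argument simply never arises as a special case.
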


\begin{proof}
Since $\Phi^{-1}(\{a\})$ is open for each $a \in A$ we can partition $\mathcal{G}$ into the open sets $\Phi^{-1}(\{a\})$, $a \in A \setminus \{e_{0}\}$, $\mathcal{G}^{(0)}$ and $\Phi^{-1}(\{e_{0}\})\setminus \mathcal{G}^{(0)}$. Using a partition of unity and linearity and continuity of $\omega$ and $\omega \circ P$ it follows that it is enough to prove that $\omega(f)=\omega(P(f))$ for $f \in C_{c}(\mathcal{G})$ supported in any of the above three kinds of sets. Suppose first that $\text{supp}(f) \subseteq \Phi^{-1}(\{a\})$ for a $a \neq e_{0}$. It follows that $P(f)=0$. Let $m$ denote the normalised Haar-measure on $\hat{A}$, the invariance of $\omega$ under $\Psi$ implies that:
$$
\omega(f)=\int_{\hat{A}} \omega(\Psi_{\xi}(f)) \ dm(\xi)= \int_{\hat{A}} \omega(f) \xi(a) \ dm(\xi) =\omega(f) \int_{\hat{A}}  \xi(a) \ dm(\xi) =0 .
$$
If $\text{supp}(f) \subseteq \mathcal{G}^{(0)}$ then $P(f)=f$ and $\omega(f)=\omega(P(f))$. For the last case notice that if $g \in\Phi^{-1}(\{e_{0}\})\setminus \mathcal{G}^{(0)}$ then $r(g)=s(g)$ would imply that $g \in \ker(\Phi) \cap \mathcal{G}_{r(g)}^{r(g)}$, contradicting that $g \notin \mathcal{G}^{(0)}$. Since $\mathcal{G}$ is \'etale it follows by linearity that we can assume $\text{supp}(f)$ is contained in an open set $U$ with $\overline{r(U)}\cap \overline{s(U)} = \emptyset$. Since $\mathcal{G}^{(0)}$ is compact we can pick $h \in C_{c}(\mathcal{G}^{(0)})$ with $h=1$ on $\overline{r(U)}$ and $\text{supp}(h) \subseteq \overline {s(U)}^{C}$. It follows using the definition of the product in $C_{c}(\mathcal{G})$ that $f=hf$ and $fh=0$, so using that $\omega$ is a $\alpha^{c}$-KMS$_{\beta}$  state and $h$ is fixed by $\alpha^{c}$ we get:
$$
\omega(f)=\omega(hf)=\omega(fh)=0
$$
which proves the Lemma.
\end{proof}

We can now use the gauge-action to control the size of the set of extremal KMS states.

\begin{thm}\label{opt}
Let $c: \mathcal{G} \to \mathbb{R}$ be a continuous groupoid homomorphism, $\beta \in \mathbb{R}$ and $\omega$ be an extremal $\beta$-KMS state for $\alpha^{c}$ on $C^{*}(\mathcal{G})$. Then for any extremal $\alpha^{c}$-KMS$_{\beta}$ state $\psi$ satisfying that $\psi \circ P=\omega \circ P$ there is a $\xi \in \hat{A}$ with $\psi=\omega \circ \Psi_{\xi}$.
\end{thm}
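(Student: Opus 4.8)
The plan is to attach to every extremal $\alpha^{c}$-KMS$_{\beta}$ state a boundary-supported probability measure on the KMS simplex, built from the gauge-action, whose barycenter is the $P$-restriction of the state; the conclusion will then follow from uniqueness of representing boundary measures in a simplex.

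First I would record two elementary facts about the gauge automorphisms $\Psi_{\xi}$, $\xi\in\widehat{A}$. Since $\Phi$ is a homomorphism, $\Phi(x)=\Phi(xx)=\Phi(x)^{2}$, hence $\Phi(x)=e_{0}$, for every $x\in\mathcal{G}^{(0)}$; thus $\Psi_{\xi}$ restricts to the identity on $C_{0}(\mathcal{G}^{(0)})$, so $P\circ\Psi_{\xi}=P$ and therefore $(\phi\circ\Psi_{\xi})\circ P=\phi\circ P$ for every state $\phi$ on $C^{*}(\mathcal{G})$ and every $\xi$. Secondly, $\Psi_{\xi}$ and $\alpha^{c}_{t}$ both act on $C_{c}(\mathcal{G})$ by multiplying the value at $g$ by a scalar, so they commute; hence $\phi\mapsto\phi\circ\Psi_{\xi}$ maps the (weak-$*$ compact, convex) set $\Delta$ of $\alpha^{c}$-KMS$_{\beta}$ states onto itself and, being induced by a $*$-automorphism, preserves $\partial\Delta$.

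Next, for $\phi\in\Delta$ I would consider the weak-$*$ continuous map $\Theta_{\phi}\colon\widehat{A}\to\Delta$, $\Theta_{\phi}(\xi)=\phi\circ\Psi_{\xi}$, and the Radon probability measure $\mu_{\phi}:=(\Theta_{\phi})_{*}m$ on $\Delta$, where $m$ is normalised Haar measure on the compact group $\widehat{A}$. Its barycenter $b(\mu_{\phi})=\int_{\widehat{A}}\phi\circ\Psi_{\xi}\,dm(\xi)$ lies in $\Delta$ (barycenters of probability measures on a compact convex set lie in the set), and is invariant under the gauge-action because $\Psi_{\xi}\Psi_{\eta}=\Psi_{\xi\eta}$ and $m$ is translation invariant; so Lemma~\ref{l320} gives $b(\mu_{\phi})=b(\mu_{\phi})\circ P$, which, since $\Psi_{\xi}$ fixes the range of $P$, equals $\phi\circ P$. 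Applying this with $\phi=\omega$ and with $\phi=\psi$ and invoking the hypothesis $\omega\circ P=\psi\circ P$ shows that $\mu_{\omega}$ and $\mu_{\psi}$ have the same barycenter.

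To conclude, note that $C^{*}(\mathcal{G})$ is unital (as $\mathcal{G}^{(0)}$ is compact) and separable, so $\Delta$ is a metrizable Choquet simplex. Since $\omega$ and $\psi$ are extremal, $\Theta_{\omega}$ and $\Theta_{\psi}$ take values in $\partial\Delta$, so $\mu_{\omega}$ and $\mu_{\psi}$ are supported on $\partial\Delta$, hence maximal; representing the same point of the simplex $\Delta$, they must coincide by the uniqueness part of the Choquet--Meyer theorem. Finally, because $m$ has full support and $\Theta_{\phi}$ is continuous, $\text{supp}(\mu_{\phi})=\Theta_{\phi}(\widehat{A})$, so $\Theta_{\omega}(\widehat{A})=\Theta_{\psi}(\widehat{A})$; since $\psi$ itself lies in $\Theta_{\psi}(\widehat{A})$ (take $\xi$ the trivial character), there is $\xi\in\widehat{A}$ with $\psi=\Theta_{\omega}(\xi)=\omega\circ\Psi_{\xi}$. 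The step I expect to be the real obstacle is the Choquet-theoretic core --- that $\Delta$ is genuinely a simplex, so that maximal representing measures are unique, and that boundary-supported measures on a metrizable simplex are maximal --- together with the bookkeeping around barycenters, push-forwards and supports; everything else is immediate from the formula for $\Psi_{\xi}$ and from Lemma~\ref{l320}.
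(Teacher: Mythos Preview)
Your proof is correct and follows essentially the same approach as the paper: push Haar measure forward along the gauge-orbit map to get a boundary-supported measure on $\Delta$, identify its barycenter as $\phi\circ P$ via Lemma~\ref{l320}, and invoke uniqueness of maximal representing measures in the metrizable simplex $\Delta$. The only cosmetic difference is that the paper argues by contradiction (assuming the orbits are disjoint forces $\mu_{\omega}\neq\mu_{\psi}$ while the barycenters agree), whereas you argue directly (equal barycenters force $\mu_{\omega}=\mu_{\psi}$, hence equal supports, hence equal orbits); the ingredients are identical.
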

\begin{proof}
First we will argue that if some $\psi$ is a $\beta$-KMS state for $\alpha^{c}$ then $\psi \circ \Psi_{\xi}$ is also a $\beta$-KMS state for $\alpha^{c}$ for all $\xi \in \hat{A}$.
Equation \eqref{e2} implies that $\Psi_{\xi}(C_{c}(\mathcal{G})) \subseteq C_{c}(\mathcal{G})$ and that $\alpha^{c}_{t} \circ \Psi_{\xi} =  \Psi_{\xi} \circ \alpha^{c}_{t}$ for any $t\in \mathbb{R}$ and $\xi \in \hat{A}$. So for $f,g\in C_{c}(\mathcal{G})$ we get:
\begin{align*}
&\psi \circ \Psi_{\xi}(fg) = \psi (\Psi_{\xi}(f)\Psi_{\xi}(g)) = \psi (\Psi_{\xi}(g) \alpha_{i\beta}^{c} (\Psi_{\xi}(f))) \\
&= \psi (\Psi_{\xi}(g) \Psi_{\xi}( \alpha_{i\beta}^{c} (f))) = \psi \circ \Psi_{\xi}(g  \alpha_{i\beta}^{c} (f))
\end{align*}
and since $\psi \circ \Psi_{\xi}$ is clearly a state it is a $\beta$-KMS state for $\alpha^{c}$. That $\psi \circ \Psi_{\xi}$ is an extremal $\alpha^{c}$-KMS$_{\beta}$ state for any extremal $\beta$-KMS state $\psi$ and $\xi \in \hat{A}$ is straightforward to check using that $\Psi_{\xi}$ has inverse $\Psi_{\xi^{-1}}$. Now assume for contradiction that there is an extremal $\beta$-KMS state for $\alpha^{c}$, say $\psi$, with $\psi \circ P=\omega \circ P$ which is not on the form $\omega \circ \Psi_{\xi}$ for any $\xi \in \hat{A}$. It follows first that:
$$
\{\psi \circ \Psi_{\xi} \ | \ \xi \in \hat{A} \}
$$
is a set of extremal $\beta$-KMS states for $\alpha^{c}$, and then that:
$$
\{\psi \circ \Psi_{\xi} \ | \ \xi \in \hat{A} \} \cap \{\omega \circ \Psi_{\eta} \ | \ \eta \in \hat{A}\} = \emptyset
$$
since if $\psi \circ \Psi_{\xi} = \omega \circ  \Psi_{\eta}$ for some $\xi, \eta \in \hat{A}$, then $\psi = \omega \circ \Psi_{\eta \xi^{-1}}$, contradicting our choice of $\psi$. Denoting the $\beta$-KMS states for $\alpha^{c}$ by $\Delta$, we can define two functions from $\hat{A}$ to $ \Delta$:
$$
F_{1}(\xi)=\omega\circ \Psi_{\xi} \qquad F_{2}(\xi)=\psi \circ \Psi_{\xi} .
$$
Since $\Psi$ is strongly continuous, $F_{1}$ and $F_{2}$ are continuous when $\Delta$ has the weak$^{*}$-topology, so since $\hat{A}$ is compact $F_{1}(\hat{A}) \subseteq \partial \Delta$ and $F_{2}(\hat{A}) \subseteq \partial \Delta$ are two disjoint compact sets. Define two measures:
$$
\nu_{1}=m \circ F_{1}^{-1} \qquad , \qquad \nu_{2}=m \circ F_{2}^{-1}
$$
where $m$ is the normalised Haar-measure on $\hat{A}$, then $\nu_{1}$ and $\nu_{2}$ become Borel probability measures on $ \Delta$ supported on disjoint sets, and hence $\nu_{1} \neq \nu_{2}$. Since $\Delta$ is metrizable Choquet theory informs us, c.f. Theorem 4.1.11 in \cite{BR}, that  since $\nu_{1}(\partial \Delta)=1=\nu_{2}(\partial \Delta)$ both measures are maximal. So since $\Delta$ is a simplex they have two different barycenters $\omega_{1}\neq \omega_{2} \in \Delta$. For all $x \in C^{*}(\mathcal{G})_{+}$:
$$
\omega_{1}(x)=\int_{ \Delta} ev_{x}(\gamma) d\nu_{1}(\gamma) = \int_{\hat{A}} ev_{x}(\omega \circ \Psi_{\xi}) d m(\xi)=\int_{\hat{A}} \omega \circ \Psi_{\xi}(x) d m(\xi) .
$$
Notice that setting $\omega'(y):=\int_{\hat{A}} \omega \circ \Psi_{\xi}(y) d m(\xi)$ for $y \in C^{*}(\mathcal{G})$ defines a $\alpha^{c}$-KMS$_{\beta}$ state that is invariant under $\Psi$, and hence $\omega'(y)=\omega'(P(y))$. However $\Psi$ fixes $C(\mathcal{G}^{(0)})$ pointwise and hence $\omega'(y)=\omega'(P(y))=\omega (P(y))$. So $\omega_{1}(x)=\omega\circ P(x)$, and likewise $\omega_{2} (x)=\psi \circ P(x)$, contradicting that $\omega \circ P=\psi \circ P$ but $\omega_{1} \neq \omega_{2}$.
\end{proof}

\section{Extremal KMS states}

In this section we again let $\mathcal{G}$ be a groupoid with an abelian valued homomorphism $\Phi:\mathcal{G} \to A$ as in Definition \ref{d2}. To use Theorem \ref{opt} we need to obtain some extremal KMS state. The purpose of this section is to use Neshveyevs Theorem to obtain one extremal KMS state, and then use Theorem \ref{opt} to obtain the rest. To ease notation we will identify regular finite Borel measures on $\mathcal{G}^{(0)}$ with positive continuous linear functionals on $C(\mathcal{G}^{(0)})$.

\begin{lemma} \label{l24}
Fix a continuous groupoid homomorphism $c:\mathcal{G} \to \mathbb{R}$ and a $\beta \in \mathbb{R}\setminus \{0\}$. Let $\tilde{\Delta}$ be the set of $e^{-\beta c}$-quasi-invariant probability measures on $\mathcal{G}^{(0)}$, and for any $\mu \in \tilde{\Delta}$ let $\Delta_{\mu}$ be the set of $\alpha^{c}$-KMS$_{\beta}$ states $\omega$ on $C^{*}(\mathcal{G})$ with $\omega |_{C(\mathcal{G}^{(0)})}=\mu$. Then:
\begin{enumerate}
\item $\tilde{\Delta}$ is a compact convex set.
\item $\Delta_{\mu}$ is a compact convex set for any $\mu \in \tilde{\Delta}$.
\item A $\beta$-KMS state $\omega$ for $\alpha^{c}$ is extremal in the simplex of $\alpha^{c}$-KMS$_{\beta}$ states $\Delta$ if and only if $\mu:=\omega |_{C(\mathcal{G}^{(0)})} \in \partial \tilde{\Delta}$ and $\omega \in \partial \Delta_{\mu}$.
\end{enumerate}
\end{lemma}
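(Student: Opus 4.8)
The plan is to treat the three items in increasing order of difficulty; (1) and (2) are essentially formal, whereas (3) will rest on Neshveyev's Theorem. Convexity in (1) and (2) is immediate from affineness of the defining conditions: the bisection identity defining $e^{-\beta c}$-quasi-invariance is linear in $\mu$, and both ``$\omega$ is an $\alpha^{c}$-KMS$_{\beta}$ state'' and ``$\omega|_{C(\mathcal{G}^{(0)})}=\mu$'' are stable under convex combinations. For compactness of $\Delta_{\mu}$, note that $C^{*}(\mathcal{G})$ is unital because $\mathcal{G}^{(0)}$ is compact, so the set $\Delta$ of $\alpha^{c}$-KMS$_{\beta}$ states is weak$^{*}$-compact; the restriction map $\omega\mapsto\omega|_{C(\mathcal{G}^{(0)})}$ is weak$^{*}$-continuous, and $\Delta_{\mu}$ is the preimage of the single point $\mu$, hence weak$^{*}$-closed in $\Delta$. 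For compactness of $\tilde{\Delta}$ the one thing to verify is that quasi-invariance is a weak$^{*}$-closed condition, and since the bisection definition involves characteristic functions I would first pass to its integrated form: for $f\in C_{c}(\mathcal{G})$ the functions $(\mathcal{R}f)(x)=\sum_{g\in\mathcal{G}^{x}}f(g)$ and $(\mathcal{S}f)(x)=\sum_{g\in\mathcal{G}_{x}}e^{-\beta c(g)}f(g)$ lie in $C(\mathcal{G}^{(0)})$ (cover $\mathrm{supp}(f)$ by finitely many bisections), and a partition of unity into bisections together with an approximation of each indicator $1_{s(W)}$ by functions in $C_{c}(s(W))$ shows that $\mu\in\tilde{\Delta}$ if and only if $\mu(\mathcal{R}f)=\mu(\mathcal{S}f)$ for every $f\in C_{c}(\mathcal{G})$. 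Each such equality defines a weak$^{*}$-closed set of measures, so $\tilde{\Delta}$ is a weak$^{*}$-closed subset of the weak$^{*}$-compact set of probability measures on $\mathcal{G}^{(0)}$.

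For (3), two of the implications are formal. By Neshveyev's Theorem every $\alpha^{c}$-KMS$_{\beta}$ state restricts on $C(\mathcal{G}^{(0)})$ to a member of $\tilde{\Delta}$, so $\Delta=\bigcup_{\mu\in\tilde{\Delta}}\Delta_{\mu}$; hence if $\omega\in\partial\Delta$ then $\omega$ is automatically extremal in the convex subset $\Delta_{\mu}$ containing it. Conversely, assume $\mu\in\partial\tilde{\Delta}$ and $\omega\in\partial\Delta_{\mu}$ and write $\omega=t\omega_{1}+(1-t)\omega_{2}$ with $\omega_{1},\omega_{2}\in\Delta$ and $t\in(0,1)$; restricting to $C(\mathcal{G}^{(0)})$ gives $\mu=t\mu_{1}+(1-t)\mu_{2}$ with $\mu_{1},\mu_{2}\in\tilde{\Delta}$, so extremality of $\mu$ forces $\mu_{1}=\mu_{2}=\mu$, whence $\omega_{1},\omega_{2}\in\Delta_{\mu}$, and then extremality of $\omega$ in $\Delta_{\mu}$ gives $\omega_{1}=\omega_{2}=\omega$.

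The substantive point is the remaining implication: $\omega\in\partial\Delta$ forces $\mu:=\omega|_{C(\mathcal{G}^{(0)})}\in\partial\tilde{\Delta}$. I would prove this by lifting a decomposition of $\mu$ to one of $\omega$. Let $(\mu,\{\varphi_{x}\}_{x\in\mathcal{G}^{(0)}})$ be the pair assigned to $\omega$ by Neshveyev's Theorem and suppose $\mu=t\mu_{1}+(1-t)\mu_{2}$ with $\mu_{1},\mu_{2}\in\tilde{\Delta}$ and $t\in(0,1)$. The key observation is that $\mu_{i}\leq t^{-1}\mu$ (respectively $(1-t)^{-1}\mu$), so $\mu_{i}\ll\mu$; consequently every $\mu$-null set is $\mu_{i}$-null, so the same field $\{\varphi_{x}\}$ is $\mu_{i}$-measurable and still satisfies \eqref{htn} for $\mu_{i}$-a.e.\ $x$, and hence $(\mu_{i},\{\varphi_{x}\})$ is again an admissible Neshveyev pair defining an $\alpha^{c}$-KMS$_{\beta}$ state $\omega_{i}$. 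Since the integrand $x\mapsto\sum_{g\in\mathcal{G}_{x}^{x}}f(g)\varphi_{x}(u_{g})$ is the same for $\omega$, $\omega_{1}$ and $\omega_{2}$, the formula for the KMS state yields $\omega(f)=t\omega_{1}(f)+(1-t)\omega_{2}(f)$ for all $f\in C_{c}(\mathcal{G})$, hence $\omega=t\omega_{1}+(1-t)\omega_{2}$; as $\omega\in\partial\Delta$ and $t\in(0,1)$ we conclude $\omega_{1}=\omega_{2}=\omega$, and restricting to $C(\mathcal{G}^{(0)})$ gives $\mu_{1}=\mu_{2}=\mu$. I expect this lifting to be the main obstacle — in particular the realisation that $\mu_{i}\ll\mu$, which is what allows the single Neshveyev field to be recycled; everything else is routine.
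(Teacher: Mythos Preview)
Your proof is correct and, for the substantive part (3), identical to the paper's: you lift a convex decomposition of $\mu$ to one of $\omega$ by reusing the Neshveyev field $\{\varphi_{x}\}$, and you are in fact more explicit than the paper about why this is legitimate (you note $\mu_{i}\ll\mu$, which the paper leaves implicit). The only difference is in the compactness argument for $\tilde{\Delta}$ in (1): rather than your integrated-form characterisation of quasi-invariance, the paper takes $\mu_{n}\to\mu$, observes that $\omega_{n}:=\mu_{n}\circ P$ are $\beta$-KMS states converging weak$^{*}$ to some $\omega\in\Delta$, and reads off $\mu=\omega|_{C(\mathcal{G}^{(0)})}\in\tilde{\Delta}$; your route is more self-contained since it does not presuppose that $\mu\circ P$ is a KMS state.
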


\begin{proof}
That $\tilde{\Delta}$ is convex is straightforward to see. To see that it is closed, let $\{\mu_{n}\}_{n\in \mathbb{N}} \subseteq \tilde{\Delta}$ be a sequence such that $\mu_{n} \to \mu$ in the weak$^{*}$ topology. Then $\omega_{n}(x) :=\int_{\mathcal{G}^{(0)}} P(x)d\mu_{n}$ defines a sequence of $\beta$-KMS states that converges in the weak$^{*}$ topology, i.e. $\omega_{n} \to \omega$ for some $\beta$-KMS state $\omega$. It then follows that $\omega |_{C(\mathcal{G}^{(0)})} = \mu$, so that $\mu \in \tilde{\Delta}$. We leave the verification of $(2)$ to the reader.

For $(3)$, assume that $\omega \in \partial \Delta$ and let $\mu=\omega|_{C(\mathcal{G}^{(0)})}$. By Theorem 1.3 in \cite{N}, $\omega$ is given by a pair $(\mu, \{\varphi_{x}\}_{x \in \mathcal{G}^{(0)}})$ where $\{\varphi_{x}\}_{x \in \mathcal{G}^{(0)}}$ is a $\mu$-measurable field of states satisfying \eqref{htn}. Assume $\mu=\lambda \mu_{1}+(1-\lambda)\mu_{2}$ for some $\mu_{1}, \mu_{2} \in \tilde{\Delta}$ and $\lambda \in ]0,1[$. Then $\{\varphi_{x}\}_{x \in \mathcal{G}^{(0)}}$ is also $\mu_{1}$- and $\mu_{2}$-measurable and satisfies \eqref{htn} for $\mu_{1}$ and $\mu_{2}$, and then $(\mu_{1}, \{\varphi_{x}\}_{x \in \mathcal{G}^{(0)}})$ and $(\mu_{2}, \{\varphi_{x}\}_{x \in \mathcal{G}^{(0)}})$ represent two KMS states $\omega_{1}$ and $\omega_{2}$, satisfying:
\begin{equation*}
\omega(f)=\int_{\mathcal{G}^{(0)}} \sum_{g \in \mathcal{G}_{x}^{x}} f(g) \varphi_{x}(u_{g}) \ d\mu(x) 
=\lambda \omega_{1}(f) +(1-\lambda) \omega_{2}(f)
\end{equation*}
for all $f \in C_{c}(\mathcal{G})$. Since $\omega \in \partial \Delta$ this implies $\omega=\omega_{1}=\omega_{2}$, and hence $\mu=\mu_{1}=\mu_{2}$, proving that $\mu \in \partial \tilde{\Delta}$, and since $\Delta_{\mu}$ is contained in $\Delta$, we get that $\omega \in \partial \Delta_{\mu}$. The other implication in $(3)$ is straightforward.
\end{proof}

Using Lemma \ref{l24} we can now find an extremal KMS state for $(C^{*}(\mathcal{G}), \alpha^{c})$.

\begin{prop} \label{t22}
Let $c:\mathcal{G} \to \mathbb{R}$ be a continuous groupoid homomorphism, $\beta \in \mathbb{R} \setminus \{0\}$ and assume that $\mu$ is a $e^{-\beta c}$-quasi-invariant probability measure. Then for any $\xi \in \hat{A}$ there is a $\beta$-KMS state $\omega_{\xi}$ for $\alpha^{c}$ given by:
$$
\omega_{\xi}(f) = \int_{\mathcal{G}^{(0)}} \sum_{g \in \mathcal{G}_{x}^{x}} f(g) \xi(\Phi(g)) \ d\mu(x)
$$ 
for all $f \in C_{c}(\mathcal{G})$. For the function $1\in \hat{A}$ the state $\omega_{1}$ is an extremal point in $\Delta_{\mu}$.
\end{prop}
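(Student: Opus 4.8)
The plan is to build $\omega_{\xi}$ directly from Neshveyev's theorem and then to deduce extremality of $\omega_{1}$ from the injectivity of Neshveyev's correspondence together with purity of the trivial character. For the first part, fix $\xi\in\hat A$. For each $x\in\mathcal{G}^{(0)}$ the restriction $\Phi|_{\mathcal{G}_{x}^{x}}\colon\mathcal{G}_{x}^{x}\to A$ is a group homomorphism, so $\xi\circ\Phi|_{\mathcal{G}_{x}^{x}}\colon\mathcal{G}_{x}^{x}\to\mathbb{T}$ is a character of the discrete group $\mathcal{G}_{x}^{x}$; integrating this one-dimensional unitary representation yields a state $\varphi_{x}$ on $C^{*}(\mathcal{G}_{x}^{x})$ with $\varphi_{x}(u_{g})=\xi(\Phi(g))$ for all $g\in\mathcal{G}_{x}^{x}$. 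I would then check that $\{\varphi_{x}\}_{x\in\mathcal{G}^{(0)}}$ is a $\mu$-measurable field: since $A$ is discrete, $\Phi$ is locally constant, so $g\mapsto f(g)\xi(\Phi(g))$ again lies in $C_{c}(\mathcal{G})$ for $f\in C_{c}(\mathcal{G})$, and it suffices to see that $x\mapsto\sum_{g\in\mathcal{G}_{x}^{x}}h(g)$ is Borel for every $h\in C_{c}(\mathcal{G})$. Using a partition of unity subordinate to a finite cover of $\operatorname{supp}(h)$ by open bisections, one may assume $\operatorname{supp}(h)\subseteq W$ for an open bisection $W$; then this sum equals $h(s_{W}^{-1}(x))$ on the set $\{x\in s(W)\ :\ r(s_{W}^{-1}(x))=x\}$ and vanishes elsewhere, and that set is relatively closed in $s(W)$ because $\mathcal{G}^{(0)}$ is Hausdorff, so the sum is Borel. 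Finally I would verify the compatibility condition \eqref{htn}: for $g\in\mathcal{G}_{x}^{x}$ and $h\in\mathcal{G}_{x}$ one has $hgh^{-1}\in\mathcal{G}_{r(h)}^{r(h)}$ and $\Phi(hgh^{-1})=\Phi(h)\Phi(g)\Phi(h)^{-1}=\Phi(g)$ since $A$ is abelian, whence $\varphi_{r(h)}(u_{hgh^{-1}})=\xi(\Phi(g))=\varphi_{x}(u_{g})$, in fact for every $x$. Neshveyev's theorem (Theorem 1.3 in \cite{N}) then associates to $(\mu,\{\varphi_{x}\}_{x\in\mathcal{G}^{(0)}})$ a $\beta$-KMS state $\omega_{\xi}$ for $\alpha^{c}$, and the displayed formula for $\omega_{\xi}(f)$ is exactly the formula in Neshveyev's theorem.

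For extremality of $\omega_{1}$, note first that for $\xi=1$ we get $\varphi_{x}(u_{g})=1$ for all $g$, so $\omega_{1}$ restricts on $C(\mathcal{G}^{(0)})$ to $\int_{\mathcal{G}^{(0)}}f\,d\mu=\mu(f)$; hence $\omega_{1}\in\Delta_{\mu}$, which is compact and convex by Lemma \ref{l24}. Suppose $\omega_{1}=\lambda\omega'+(1-\lambda)\omega''$ with $\omega',\omega''\in\Delta_{\mu}$ and $\lambda\in\,]0,1[$. Writing $\omega'$ and $\omega''$ through Neshveyev's theorem as pairs $(\mu,\{\varphi'_{x}\})$ and $(\mu,\{\varphi''_{x}\})$ — both have restriction $\mu$ to $C(\mathcal{G}^{(0)})$ — the field $x\mapsto\lambda\varphi'_{x}+(1-\lambda)\varphi''_{x}$ is again $\mu$-measurable, satisfies \eqref{htn}, and represents $\lambda\omega'+(1-\lambda)\omega''=\omega_{1}$; since the pair representing a KMS state is unique modulo $\mu$-null sets, $\lambda\varphi'_{x}+(1-\lambda)\varphi''_{x}$ equals the trivial character of $\mathcal{G}_{x}^{x}$ for $\mu$-a.e.\ $x$. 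But the trivial character, being a vector state of the (irreducible) one-dimensional trivial representation, is a pure and hence extreme state on $C^{*}(\mathcal{G}_{x}^{x})$, so $\varphi'_{x}=\varphi''_{x}$ equals the trivial character for $\mu$-a.e.\ $x$; therefore $\omega'=\omega''=\omega_{1}$, proving $\omega_{1}\in\partial\Delta_{\mu}$.

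The only genuinely technical point I anticipate is the Borel measurability of $x\mapsto\sum_{g\in\mathcal{G}_{x}^{x}}h(g)$, which should be the main (though routine) obstacle; once $\{\varphi_{x}\}$ is known to be a legitimate $\mu$-measurable field, the rest of the first part is a direct application of Neshveyev's theorem and the commutativity of $A$. The conceptual core of the second part is that Neshveyev's correspondence is affine and injective on equivalence classes of fields, which reduces extremality of $\omega_{1}$ in $\Delta_{\mu}$ to extremality of the trivial state in the full state space of each $C^{*}(\mathcal{G}_{x}^{x})$.
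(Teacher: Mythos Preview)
Your proof is correct and follows essentially the same route as the paper: construct the field of states via the characters $\xi\circ\Phi|_{\mathcal{G}_{x}^{x}}$, verify $\mu$-measurability by reducing to functions supported in a single bisection, check condition \eqref{htn} using commutativity of $A$, and deduce extremality of $\omega_{1}$ from Neshveyev's uniqueness together with purity of the trivial character on each $C^{*}(\mathcal{G}_{x}^{x})$. The only cosmetic differences are that the paper phrases $\varphi_{x}$ as a composition $H_{\xi}\circ\iota_{x}$ through $C^{*}(A)$ and argues purity via multiplicativity on an abelian $C^{*}$-algebra, whereas you invoke irreducibility of the one-dimensional representation directly; both are equivalent.
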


\begin{proof}
To prove the first claim it is enough to find a $\mu$-measurable field of states $\{\psi_{x}\}_{x \in \mathcal{G}^{(0)}}$ satisfying \eqref{htn} such that $\psi_{x}(u_{g})=\xi(\Phi(g))$ for all $g \in \mathcal{G}_{x}^{x}$ and all $x \in \mathcal{G}^{(0)}$. To do this fix a $\xi \in \hat{A}$ and define a $*$-homomorphism $H_{\xi}: C^{*}(A) \to \mathbb{C}$ by specifying that $H_{\xi}(u_{a})=\xi(a)$ for all unitary generators $u_{a}$ and $a \in A$. In particular we have that $H_{\xi}$ is a state on $C^{*}(A)$. The condition that $\ker(\Phi) \cap \mathcal{G}_{x}^{x} = \{x\}$ implies that $\Phi: \mathcal{G}_{x}^{x} \to A$ is an injective group homomorphism for each $x \in \mathcal{G}^{(0)}$, which gives us an injective unital $*$-homomorphism $\iota_{x}: C^{*}(\mathcal{G}_{x}^{x}) \to C^{*}(A)$ satisfying $\iota_{x}(u_{g})=u_{\Phi(g)}$ for all $g \in \mathcal{G}_{x}^{x}$. For each $x \in \mathcal{G}^{(0)}$ we define a state $\psi_{x} : = H_{\xi} \circ \iota_{x}$ on $C^{*}(\mathcal{G}_{x}^{x})$ and claim that $\{\psi_{x}\}_{x \in \mathcal{G}^{(0)}}$ is a $\mu$-measurable field of states. It suffices to prove that
$$
\mathcal{G}^{(0)} \ni x \to \sum_{g \in \mathcal{G}_{x}^{x}} f(g) \psi_{x}(u_{g})
$$
is $\mu$-measurable for $f \in C_{c}(\mathcal{G})$ with $\text{supp}(f) \subseteq  W \subseteq \overline{W} \subseteq U \subseteq \Phi^{-1}(\{a\})$ where $W$ is open, $\overline{W}$ is compact, $U$ is an open bisection and $a \in A$. The set $N:=\{g\in \overline{W} \ | \ s(g)=r(g)\}$ is compact in $\mathcal{G}$, so $r(N)$ is closed in $\mathcal{G}^{(0)}$. However
$$
\mathcal{G}^{(0)} \setminus r(N) \ni x \to  \sum_{g \in \mathcal{G}_{x}^{x}} f(g) \psi_{x}(u_{g})=0
$$
while for $x \in r(N)$ we have:
$$
\sum_{g \in \mathcal{G}_{x}^{x}} f(g) \psi_{x}(u_{g}) = \sum_{g \in \mathcal{G}_{x}^{x}} f(g) \xi(\Phi(g))=  f(r_{\overline{W}}^{-1}(x)) \xi(a) .
$$
So since $r(N) \ni x \to f(r_{\overline{W}}^{-1}(x)) \xi(a)$ is continuous $\{\psi_{x}\}_{x \in \mathcal{G}^{(0)}}$ is a $\mu$-measurable field. For any $x \in \mathcal{G}^{(0)}$ and all $g \in \mathcal{G}_{x}^{x}$ and $h\in\mathcal{G}_{x}$ we have $\psi_{r(h)}(u_{hgh^{-1}})=\xi(\Phi(hgh^{-1})) = \psi_{x}(u_{g})$, so $\{\psi_{x}\}_{x \in \mathcal{G}^{(0)}}$ satisfies \eqref{htn}. To prove $\omega_{1}$ is extremal assume that $\omega_{1}=\lambda \varphi'+(1-\lambda)\tilde{\varphi}$ with $\varphi', \tilde{\varphi} \in \Delta_{\mu}$. Now letting $\{\psi'_{x}\}_{x \in \mathcal{G}^{(0)}}$, $\{\tilde{\psi}_{x}\}_{x \in \mathcal{G}^{(0)}}$ be $\mu$-measurable fields corresponding to respectively $\varphi'$ and $\tilde{\varphi}$, then:
$$
\omega_{1}(f) = \lambda \varphi'(f)+(1-\lambda)\tilde{\varphi}(f)= \int_{\mathcal{G}^{(0)}} \sum_{g \in \mathcal{G}_{x}^{x}} f(g) (\lambda \psi'_{x}+(1-\lambda)\tilde{\psi}_{x})(u_{g}) \ d\mu(x) .
$$
Using the uniqueness result of Neshveyev we get that $\lambda \psi'_{x}+(1-\lambda)\tilde{\psi}_{x}=\psi_{x}$ for $\mu$-almost all $x$ in $\mathcal{G}^{(0)}$. However $\psi_{x}= H_{1} \circ \iota_{x}$ is multiplicative on an abelian $C^{*}$-algebra, giving that it is a pure-state by Corollary 2.3.21 in \cite{BR}. So $\psi_{x}'$ and $\tilde{\psi_{x}}$ has to be equal to $\psi_{x}$ for $\mu$-almost all $x$, and hence $\tilde{\varphi} = \omega_{1}=\varphi'$ which proves the proposition.
\end{proof}

\section{Symmetries of the KMS simplex}
We now combine the results from the last two sections to obtain a description of the extremal points of the simplex of $\beta$-KMS states for $\beta \neq 0$. Throughout this section we again consider a groupoid $\mathcal{G}$ with an abelian valued homomorphism $\Phi:\mathcal{G} \to A$ as in Definition \ref{d2}.

\begin{thm}\label{t34}
Let $c: \mathcal{G} \to \mathbb{R}$ be a continuous groupoid homomorphism and $\beta \in \mathbb{R} \setminus \{0\}$. Then any extremal $\beta$-KMS state $\omega$ for $\alpha^{c}$ is on the form:
\begin{equation}\label{kms}
\omega(f) = \int_{\mathcal{G}^{(0)}} \sum_{g \in \mathcal{G}_{x}^{x}} f(g) \xi(\Phi(g)) \ d\mu(x) \qquad \forall f \in C_{c}(\mathcal{G})
\end{equation}
where $\mu\in \partial \tilde{\Delta}$ and $\xi \in \hat{A}$. Conversely any state on this form is extremal.
\end{thm}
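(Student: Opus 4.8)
The plan is to assemble Theorem \ref{t34} directly from the three results already proved: Lemma \ref{l24}, Proposition \ref{t22} and Theorem \ref{opt}. The point is that Proposition \ref{t22} supplies one extremal state attached to each $\mu$, Lemma \ref{l24} controls how extremality splits between the measure and the fibre over it, and Theorem \ref{opt} then sweeps out all the remaining extremal states in a given fibre by the gauge-action.

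For the forward direction I would take an extremal $\beta$-KMS state $\omega$ and put $\mu := \omega|_{C(\mathcal{G}^{(0)})}$. By part (3) of Lemma \ref{l24}, $\mu \in \partial\tilde{\Delta}$, so the required measure is already in hand. Next apply Proposition \ref{t22} to $\mu$ and the trivial character $1 \in \hat{A}$ to obtain the state $\omega_{1}$, which is extremal in $\Delta_{\mu}$; since $\mu \in \partial\tilde{\Delta}$, part (3) of Lemma \ref{l24} promotes this to extremality of $\omega_{1}$ in the whole simplex $\Delta$. Because $\mathcal{G}^{(0)}$ is compact, $P$ maps into $C(\mathcal{G}^{(0)})$, and since both $\omega$ and $\omega_{1}$ restrict to $\mu$ there we get $\omega \circ P = \mu \circ P = \omega_{1}\circ P$. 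Now invoke Theorem \ref{opt} with $\omega_{1}$ playing the role of the reference extremal state and $\omega$ playing the role of $\psi$: this produces $\xi \in \hat{A}$ with $\omega = \omega_{1}\circ\Psi_{\xi}$. A one-line computation using $\Psi_{\xi}(f)(g) = \xi(\Phi(g))f(g)$ together with the explicit formula $\omega_{1}(f) = \int_{\mathcal{G}^{(0)}}\sum_{g\in\mathcal{G}_{x}^{x}} f(g)\,d\mu(x)$ from Proposition \ref{t22} shows $\omega_{1}\circ\Psi_{\xi}$ equals the state of \eqref{kms}, which finishes this direction.

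For the converse, suppose $\omega$ is given by \eqref{kms} for some $\mu \in \partial\tilde{\Delta}$ and $\xi \in \hat{A}$; the same computation as above rewrites it as $\omega = \omega_{1}\circ\Psi_{\xi}$ with $\omega_{1}$ the state from Proposition \ref{t22}. That proposition gives $\omega_{1} \in \partial\Delta_{\mu}$, and since $\mu \in \partial\tilde{\Delta}$, Lemma \ref{l24}(3) makes $\omega_{1}$ extremal in $\Delta$. Finally, $\Psi_{\xi}$ is an automorphism of $C^{*}(\mathcal{G})$ commuting with every $\alpha^{c}_{t}$ and with inverse $\Psi_{\xi^{-1}}$, so $\psi \mapsto \psi\circ\Psi_{\xi}$ is an affine homeomorphism of $\Delta$ onto itself and carries the extreme point $\omega_{1}$ to the extreme point $\omega$.

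I do not anticipate a genuine obstacle, as all the analytic substance sits in the earlier results; the only care needed is the bookkeeping around $P$ and the restriction to $C(\mathcal{G}^{(0)})$ — namely verifying $\omega_{1}|_{C(\mathcal{G}^{(0)})} = \mu$ (hence $\omega_{1} \in \Delta_{\mu}$) and $\omega\circ P = \omega_{1}\circ P$ — both of which hold because $\ker(\Phi)\cap\mathcal{G}_{x}^{x} = \{x\}$, and more simply $\mathcal{G}_{x}^{x}\cap\mathcal{G}^{(0)} = \{x\}$, forces the sum defining the states of \eqref{kms} to collapse to $\int f\,d\mu$ on functions supported in $\mathcal{G}^{(0)}$. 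The step carrying the real weight is the application of Theorem \ref{opt}, which is what converts the single extremal state produced by Proposition \ref{t22} into the full family parametrised by $\hat{A}$.
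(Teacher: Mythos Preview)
Your proposal is correct and follows essentially the same route as the paper's own proof: obtain $\mu\in\partial\tilde\Delta$ from Lemma \ref{l24}, build $\omega_{1}$ via Proposition \ref{t22}, promote $\omega_{1}$ to an extremal point of $\Delta$ using Lemma \ref{l24}(3), and then apply Theorem \ref{opt} to get $\omega=\omega_{1}\circ\Psi_{\xi}$; the converse goes through the same identification $\omega_{\xi}=\omega_{1}\circ\Psi_{\xi}$ together with the fact (already noted in the proof of Theorem \ref{opt}) that composition with $\Psi_{\xi}$ preserves extremal KMS states. Your write-up is in fact more explicit than the paper's on the bookkeeping points (that $\omega_{1}|_{C(\mathcal{G}^{(0)})}=\mu$ and $\omega\circ P=\omega_{1}\circ P$), which the paper leaves implicit.
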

\begin{proof}
Let $\omega$ be given by the pair $(\mu , \{\psi_{x}\}_{x \in \mathcal{G}^{(0)}})$ as in Theorem 1.3 in \cite{N}, then $\mu \in \partial \tilde{\Delta}$ by Lemma \ref{l24}. Constructing $\omega_{1}$ using $\mu$ as in Proposition \ref{t22} then $\omega_{1}$ is extremal in $\Delta_{\mu}$, and Theorem \ref{opt} then implies that $\omega=\omega_{1} \circ \Psi_{\xi}$ for some $\xi$. Since $\omega_{1} \circ \Psi_{\xi}$ is equal to $\omega_{\xi}$ from Proposition \ref{t22} this proves the formula. Conversely the state in \eqref{kms} equals $\omega_{1} \circ \Psi_{\xi}$ and hence it is extremal by Proposition \ref{t22}.
\end{proof}

We will say that an extremal KMS state $\omega$ is given by a pair $(\mu, \xi)\in \partial\tilde{\Delta} \times \hat{A}$, when $\omega$ can be written as in \eqref{kms}. The representation of the extremal KMS state is not necessarily unique: If a state is given by a pair $(\mu, \xi)$ and a pair $(\mu', \xi')$ then clearly $\mu=\mu'$, but we might not have $\xi=\xi'$. In the following Theorem we will address this issue.

\begin{thm}\label{tmain}
Let $c: \mathcal{G} \to \mathbb{R}$ be a continuous groupoid homomorphism and $\beta \in \mathbb{R} \setminus \{0\}$. Let $\mu \in \partial \tilde{\Delta}$ and let $\omega$ be the extremal $\beta$-KMS state for $\alpha^{c}$ given by the pair $(\mu, 1)$. Then:
$$
N:=\{ \xi \in \hat{A} \ | \ \omega \circ \Psi_{\xi} = \omega \}
$$
is a closed subgroup in $\hat{A}$. Consider the subgroup:
$$
B:=N^{\perp}=\{a \in A \ | \  \xi(a)=1 \text{ for all } \xi \in N\} \subseteq A .
$$
Then the following is true:
\begin{enumerate}
\item For any subgroup $C \subseteq A$ the set 
$$
X(C):=\left\{ x \in \mathcal{G}^{(0)} \ | \ \Phi(\mathcal{G}_{x}^{x})=C \right\}
$$
is a Borel set in $\mathcal{G}^{(0)}$, and:
$$
\mu\left( X(C)\right)=
\begin{cases}
1& \text{ if } C=B \\
0& \text{ else.}
\end{cases}
$$

\item $\Delta_{\mu}$ is a simplex and $\hat{B} \simeq \hat{A} /N$ acts transitively and freely on $\partial \Delta_{\mu}$. This gives rise to a homeomorphism:
\begin{equation}\label{homeo}
\hat{B} \ni \xi \to \left[ f \to \int_{X(B)} \sum_{g \in \mathcal{G}_{x}^{x}} f(g) \xi(\Phi(g)) \ d\mu(x) \right] \in\partial \Delta_{\mu} .
\end{equation}
\end{enumerate}
\end{thm}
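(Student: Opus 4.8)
The plan is to establish the algebraic skeleton first, then prove (1) --- which is where the real content sits --- and deduce (2) formally. That $N$ is a subgroup of $\hat{A}$ is immediate from $\Psi_{\xi}\Psi_{\eta}=\Psi_{\xi\eta}$, which one reads off \eqref{e2}, and it is closed because $\xi\mapsto\omega\circ\Psi_{\xi}$ is weak$^{*}$-continuous (strong continuity of $\Psi$, as in the proof of Theorem \ref{opt}) and $N$ is the preimage of $\{\omega\}$ under this map. Since $A$ is discrete every subgroup is closed, so $B=N^{\perp}$ satisfies $B^{\perp}=(N^{\perp})^{\perp}=N$ by Pontryagin duality, and restriction $\hat{A}\to\hat{B}$, $\eta\mapsto\eta|_{B}$, is a continuous surjection with kernel $B^{\perp}=N$, hence induces an isomorphism of compact groups $\hat{A}/N\simeq\hat{B}$; I will use this identification throughout.

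For (1) I would first check that $X(C)$ is Borel. For $a\in A$ put $Y_{a}=r\bigl(\Phi^{-1}(\{a\})\cap\{g\in\mathcal{G}: r(g)=s(g)\}\bigr)$; since $\Phi^{-1}(\{a\})$ is clopen and the isotropy bundle is closed (it is the preimage of the diagonal under $(r,s)$), this is the continuous image of a $\sigma$-compact set (recall $\mathcal{G}$ is locally compact and second countable), hence $F_{\sigma}$, and $a\in\Phi(\mathcal{G}_{x}^{x})\iff x\in Y_{a}$. As $\Phi(\mathcal{G}_{x}^{x})$ is always a subgroup of $A$, one gets $X(C)=\bigcap_{a\in C}Y_{a}\cap\bigcap_{a\in A\setminus C}(\mathcal{G}^{(0)}\setminus Y_{a})$, a Borel set. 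Because $A$ is abelian and $\mathcal{G}_{r(h)}^{r(h)}=h\,\mathcal{G}_{s(h)}^{s(h)}\,h^{-1}$ for $h\in\mathcal{G}$, the assignment $x\mapsto\Phi(\mathcal{G}_{x}^{x})$ is constant on orbits, so each $X(C)$ is an invariant Borel set. Finally $\mu$ is ergodic: if $E$ were an invariant Borel set with $0<\mu(E)<1$ then, by the observation on quasi-invariant measures recalled before Neshveyev's theorem, $\mu(E)^{-1}\mu_{E}$ and $\mu(E^{c})^{-1}\mu_{E^{c}}$ would be distinct elements of $\tilde{\Delta}$ expressing $\mu$ as a proper convex combination, contradicting $\mu\in\partial\tilde{\Delta}$. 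Hence $\mu(X(C))\in\{0,1\}$ for every subgroup $C$, and as the sets $X(C)$ partition $\mathcal{G}^{(0)}$ there is a unique subgroup $B_{0}$ with $\mu(X(B_{0}))=1$.

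It then remains to identify $B_{0}$ with $B$, and this is the step I expect to be the crux of the argument. By Proposition \ref{t22}, for $\xi\in\hat{A}$ the state $\omega\circ\Psi_{\xi}$ equals $\omega_{\xi}$, which is represented over the same measure $\mu$ by the field $u_{g}\mapsto\xi(\Phi(g))$; so the uniqueness half of Neshveyev's theorem gives $\xi\in N$ iff $\xi(\Phi(g))=1$ for all $g\in\mathcal{G}_{x}^{x}$ for $\mu$-a.e.\ $x$. Since $\mu(X(B_{0}))=1$ and $\Phi(\mathcal{G}_{x}^{x})=B_{0}$ on $X(B_{0})$, this condition is exactly $\xi|_{B_{0}}\equiv 1$, i.e.\ $N=B_{0}^{\perp}$. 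Applying $(\,\cdot\,)^{\perp}$ yields $B=N^{\perp}=B_{0}$, which proves (1); the measurability of $X(C)$ is then just a technical preliminary to this identification, and the genuine obstacle is the passage from the abstract stabiliser $N$ to the concrete group $B_{0}$ read off the support of $\mu$.

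For (2) I would proceed as follows. By Lemma \ref{l24}(3) the extreme points of $\Delta_{\mu}$ are precisely the extremal $\beta$-KMS states whose restriction to $C(\mathcal{G}^{(0)})$ is $\mu$, and by Theorem \ref{t34} these are exactly the states $\omega_{\xi}=\omega\circ\Psi_{\xi}$ with $\xi\in\hat{A}$. Thus $\hat{A}$ acts on $\partial\Delta_{\mu}$ by $\xi\cdot\psi=\psi\circ\Psi_{\xi}$ (this keeps us inside $\Delta_{\mu}$ because $\Psi_{\xi}$ fixes $C(\mathcal{G}^{(0)})$ pointwise), $\omega$ has full orbit $\partial\Delta_{\mu}$, and its stabiliser is $N$; hence the action factors through a free transitive action of $\hat{A}/N\simeq\hat{B}$, and the orbit map $\hat{A}/N\to\partial\Delta_{\mu}$, $[\xi]\mapsto\omega_{\xi}$, is a continuous bijection from a compact space onto a Hausdorff space, hence a homeomorphism. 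Transporting along $\hat{B}\simeq\hat{A}/N$ and rewriting, for any $\eta\in\hat{A}$ with $\eta|_{B}=\xi$,
\[
\omega_{\eta}(f)=\int_{\mathcal{G}^{(0)}}\sum_{g\in\mathcal{G}_{x}^{x}}f(g)\,\eta(\Phi(g))\ d\mu(x)=\int_{X(B)}\sum_{g\in\mathcal{G}_{x}^{x}}f(g)\,\xi(\Phi(g))\ d\mu(x)
\]
using $\mu(X(B))=1$, that $\Phi(\mathcal{G}_{x}^{x})=B$ on $X(B)$, and independence of the choice of $\eta$ since $N=B^{\perp}$, which yields the map \eqref{homeo}. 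Finally $\Delta_{\mu}$ is a simplex because it is a closed face of the Choquet simplex $\Delta$ (closed by Lemma \ref{l24}(2); a face because, using $\mu\in\partial\tilde{\Delta}$, writing a member of $\Delta_{\mu}$ as a proper convex combination of KMS states forces both summands to restrict to $\mu$), and a closed face of a Choquet simplex is again a Choquet simplex.
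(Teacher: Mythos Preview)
Your proof is essentially correct and tracks the paper's strategy closely for part (1) and most of (2). Two points deserve comment.

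First, a small technical gap in (1): the step ``as the sets $X(C)$ partition $\mathcal{G}^{(0)}$ there is a unique subgroup $B_{0}$ with $\mu(X(B_{0}))=1$'' does not follow from the partition alone, because a countable abelian group can have uncountably many subgroups (e.g.\ $\bigoplus_{n}\mathbb{Z}/2\mathbb{Z}$), so the partition $\{X(C)\}_{C}$ need not be countable and a $0$--$1$ law on each piece does not force some piece to have full measure. The paper handles this by applying ergodicity to the \emph{countably many} sets $Y_{a}$ (which are invariant by the orbit-constancy you already proved), setting $B_{0}:=\{a\in A:\mu(Y_{a})=1\}$, checking that $B_{0}$ is a subgroup, and then reading off $\mu(X(B_{0}))=1$ from your own formula $X(B_{0})=\bigcap_{a\in B_{0}}Y_{a}\cap\bigcap_{a\notin B_{0}}Y_{a}^{c}$ via countable additivity. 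All the ingredients are present in your write-up; they just need this reassembly.

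Second, your simplex argument for (2) genuinely differs from the paper's and is cleaner: you note that $\Delta_{\mu}$ is a closed face of the Choquet simplex $\Delta$ (closed by Lemma~\ref{l24}(2); a face because $\mu\in\partial\tilde{\Delta}$ forces any convex decomposition in $\Delta$ to stay in $\Delta_{\mu}$), and closed faces of Choquet simplices are again simplices. The paper instead argues directly: assuming two distinct maximal measures $\nu_{1},\nu_{2}$ on $\Delta_{\mu}$ share a barycenter, it pushes them to $\hat{B}$ via the homeomorphism \eqref{homeo}, uses Stone--Weierstrass on $C(\hat{B})$ to find $b\in B$ with $\int\xi(b)\,d\nu_{1}\neq\int\xi(b)\,d\nu_{2}$, and then constructs an explicit $f\in C_{c}(\mathcal{G})$ supported in $\Phi^{-1}(\{b\})$ with $\omega(f)\neq 0$ to obtain a contradiction. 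Your route is shorter and invokes a standard structural fact; the paper's is self-contained and makes visible how the group $B$ governs the simplex.
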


\begin{proof}
Checking that $N$ is a closed subgroup is straightforward.
To prove $(1)$, we first claim that $X(a):=\left\{x\in \mathcal{G}^{(0)} \ | \ a \in \Phi(\mathcal{G}_{x}^{x})  \right \}$ is a Borel set in $\mathcal{G}^{(0)}$ for all $a \in A$. Since $A$ is countable $\{g \in \Phi^{-1}(\{a\}) \ | \ r(g) =s(g) \}$ is closed in $\mathcal{G}$, so since $\mathcal{G}$ is second countable and \'etale this implies that $r(\{g \in \Phi^{-1}(\{a\}) \ | \ r(g) =s(g) \})=X(a)$ is Borel. Clearly $X(a)$ is an invariant set, so if $\mu(X(a)) \in ]0,1[$ then $X(a)^{C}$ would be an invariant Borel set with $\mu(X(a)^{C})\in]0,1[$, which would imply that $\mu$ could be written as a convex combination of two elements in $\tilde{\Delta}$. However $\mu \in \partial \tilde{\Delta}$, so $\mu(X(a))=0$ or $\mu(X(a))=1$. If $\mu(X(a))=1$ and $\mu(X(b))=1$ then $\mu(X(a)\cap X(b))=1$, so since $X(a) \cap X(b) \subseteq X(ab)$ and $X(a)=X(a^{-1})$ we have that:
$$
D:=\{a \in A \ | \ \mu(X(a))=1\}
$$
is a subgroup of $A$. For any subgroup $C$ of $A$ we can write:
$$
X(C)= \left(\bigcap_{c \in C} X(c) \right) \setminus \left(  \bigcup_{a \in A \setminus C} X(a) \right)
$$
and hence $X(C)$ is Borel. From this equality it also follows that $\mu(X(D))=1$. Since $X(D)\cap X(C)= \emptyset$ for subgroups $C \neq D$, this implies $\mu(X(C))=0$ when $C \neq D$. By definition of $N$ we have, using the notation of the proof of Proposition \ref{t22}, that $\xi \in N$ if and only if $H_{1} \circ \iota_{x}=H_{\xi}\circ \iota_{x}$ for $\mu$ almost all $x \in \mathcal{G}^{(0)}$, so if and only if $D \subseteq \text{Ker}(\xi)$. However $D \subseteq \text{Ker}(\xi)$ if and only if $\xi \in D^{\perp}$, so combined we get that $D^{\perp}=N$, and hence $D=(D^{\perp})^{\perp}=N^{\perp}=B$.

\smallskip

To prove $(2)$ notice first that the map that sends $\phi B^{\perp} \in \hat{A}/B^{\perp}$ to $\phi|_{B} \in \widehat{B}$ is an isomorphism by Theorem 2.1.2 in \cite{Ru}, so since $N=B^{\perp}$ this proves $\hat{A}/N \simeq \hat{B}$. Since $\mu$ is extremal in $\tilde{\Delta}$ it follows by Theorem \ref{t34} that every $\psi \in \partial \Delta_{\mu}$ is on the form $\omega \circ \Psi_{\xi}$ for some $\xi \in \hat{A}$, so by definition of $N$ we can define a transitive and free action of $\hat{A}/N$ on $\partial \Delta_{\mu}$ by:
$$
\hat{A}/N \times \partial \Delta_{\mu} \ni (\xi N, \psi) \to \psi \circ \Psi_{\xi} \in \partial \Delta_{\mu} .
$$
So the map in \eqref{homeo} is a bijection, and since functions $f \in C_{c}(\mathcal{G})$ supported in some set $\Phi^{-1}(\{a\})$, $a \in A$, spans $C_{c}(\mathcal{G})$ it follows that the map is continuous, and hence a homeomorphism since $\hat{B}$ is compact. To see that $\Delta_{\mu}$ is a simplex, let $\mu_{1}$ and $\mu_{2}$ be two different maximal regular Borel probability measures on $\Delta_{\mu}$, and assume for contradiction that they have the same barycenter. Then $\int_{\Delta_{\mu}} \gamma(x)d\mu_{1}(\gamma)=\int_{\Delta_{\mu}} \gamma(x)d\mu_{2}(\gamma)$ for all $x \in C^{*}(\mathcal{G})$. Since $\Delta$ is metrizable $\mu_{1}$ and $\mu_{2}$ are supported on $\partial \Delta_{\mu}$, so we consider them as measures on $\hat{B}$. It follows from Stone-Weierstrass that the span of $\{\text{ev}_{b} \ | \ b \in B\}$ is a dense subalgebra of $C(\hat{B})$, so there exist a $b \in B$ with $\int_{\hat{B}} \xi(b) d \mu_{1}(\xi) \neq \int_{\hat{B}} \xi(b) d \mu_{2}(\xi)$. Since $\mathcal{G}$ is sigma-compact and $\Phi^{-1}(\{b\})$ is clopen, there is an increasing sequence of positive functions $f_{n}\in C_{c}(\mathcal{G})$ that converges pointwise to $1_{\Phi^{-1}(\{b\})}$, and hence the functions $x \to \sum_{g \in \mathcal{G}_{x}^{x}} f_{n}(g)$ increases pointwise to a function $f'$ with $f'=1$ on $X(B)$. Using monotone convergence there is a $f \in C_{c}(\mathcal{G})$ with $\text{supp}(f) \subseteq \Phi^{-1}(\{b\})$ and $\omega(f)=\int_{X(B)}\sum_{g \in \mathcal{G}_{x}^{x}} f(g) \ d \mu (x) \neq 0$, and hence for $i=1,2$ we have:
\begin{align*}
\int_{ \Delta_{\mu}} \gamma(f)d\mu_{i}(\gamma) = \int_{\hat{B}} \omega(\Psi_{\xi}(f)) d\mu_{i} (\xi) = \omega(f) \int_{\hat{B}} \xi(b) d\mu_{i} (\xi)
\end{align*}
a contradiction. Hence $\Delta_{\mu}$ is a simplex.
\end{proof}

\begin{obs}
This Lemma should be compared with Proposition 11.5 in \cite{aHLRS}. In \cite{aHLRS} the authors analyse the KMS states on the Cuntz-Krieger algebras of finite strongly connected higher-rank graphs, which are $C^{*}$-algebras of groupoids satisfying Definition \ref{d2}, see section \ref{s63} below or section 12 in \cite{aHLRS}. Letting $c$ be the continuous groupoid homomorphism giving rise to what the authors call the preferred dynamics, Lemma 12.1 in \cite{aHLRS} implies that there is exactly one $e^{-c\cdot 1}$-quasi-invariant measure and that the subgroup $B$ described in Theorem \ref{tmain} is the subgroup $\text{Per}(\Lambda)$, see Proposition 5.2 in \cite{aHLRS} for the definition of $\text{Per}(\Lambda)$. Then $(2)$ in our Theorem \ref{tmain} becomes Proposition 11.5 in \cite{aHLRS}.
\end{obs}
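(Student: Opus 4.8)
The statement is a comparison rather than a fresh theorem: it asserts that Theorem \ref{tmain}, specialised to the setting of \cite{aHLRS}, recovers their Proposition 11.5. The plan is therefore not to prove anything new, but to embed the graph setting of \cite{aHLRS} into our framework, to identify the data $(\mathcal{G}, \Phi, c, \mu, B)$ appearing in Theorem \ref{tmain} with the corresponding objects there, and then to read off the conclusion from parts $(1)$ and $(2)$ of that theorem. First I would recall from Example \ref{ex22}, and in more detail from Section \ref{s63} below, that for a finite strongly connected $k$-graph $\Lambda$ the Cuntz-Krieger groupoid $\mathcal{G}$ satisfies Definition \ref{d2} with $A=\mathbb{Z}^{k}$ and $\Phi(x,m,y)=m$, so that $C^{*}(\mathcal{G})$ is the Cuntz-Krieger algebra and the gauge-action $\Psi$ is the usual $\mathbb{T}^{k}$-action.

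Next I would fix the continuous groupoid homomorphism $c:\mathcal{G}\to\mathbb{R}$ corresponding to the \emph{preferred dynamics} of \cite{aHLRS}, and take $\beta=1$ in accordance with the normalisation in the observation. The two inputs the observation needs are then both supplied by Lemma 12.1 in \cite{aHLRS}: that there is a \emph{unique} $e^{-c}$-quasi-invariant probability measure $\mu$ on $\mathcal{G}^{(0)}$, and that the subgroup $B$ produced by Theorem \ref{tmain} coincides with the periodicity group $\text{Per}(\Lambda)$ of Proposition 5.2 in \cite{aHLRS}. Uniqueness of $\mu$ means $\tilde{\Delta}=\{\mu\}=\partial\tilde{\Delta}$, so by Lemma \ref{l24}$(3)$ every extremal $1$-KMS state restricts to $\mu$ on $C(\mathcal{G}^{(0)})$ and lies in $\Delta_{\mu}$; hence $\Delta=\Delta_{\mu}$ is the full KMS simplex. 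Part $(2)$ of Theorem \ref{tmain} then yields that $\widehat{B}=\widehat{\text{Per}(\Lambda)}$ acts transitively and freely on $\partial\Delta$ via the homeomorphism \eqref{homeo}, which is exactly the assertion of Proposition 11.5 in \cite{aHLRS}.

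The one step that carries genuine content is the identification $B=\text{Per}(\Lambda)$. By part $(1)$ of Theorem \ref{tmain} the group $B$ is characterised intrinsically as the common value of $\Phi(\mathcal{G}_{x}^{x})\subseteq\mathbb{Z}^{k}$ for $\mu$-almost every $x\in\mathcal{G}^{(0)}$, whereas $\text{Per}(\Lambda)$ is defined in \cite{aHLRS} in purely combinatorial terms on the infinite path space. The main obstacle is thus to reconcile these two descriptions: one must verify, using the explicit form of the unique quasi-invariant measure and the structure of the isotropy groups $\mathcal{G}_{x}^{x}$ of the $k$-graph groupoid, that the degrees $m$ realised by isotropy at a $\mu$-generic base point are precisely the periods constituting $\text{Per}(\Lambda)$. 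Since this is exactly the matching carried out in Lemma 12.1 of \cite{aHLRS}, the observation reduces to combining that lemma with parts $(1)$ and $(2)$ of Theorem \ref{tmain}, and no further computation is required.
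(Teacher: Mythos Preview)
Your proposal is correct, and indeed it matches the paper's treatment: the paper gives no proof for this observation at all, since it is a remark rather than a theorem. Your write-up simply unpacks what the observation asserts---embedding the strongly connected $k$-graph setting into Definition~\ref{d2} via Example~\ref{ex22} and Section~\ref{s63}, invoking Lemma~12.1 of \cite{aHLRS} for uniqueness of $\mu$ and the identification $B=\mathrm{Per}(\Lambda)$, and then reading off Proposition~11.5 from part~(2) of Theorem~\ref{tmain}---which is exactly the chain of references the observation itself spells out.
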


Theorem 1.3 in \cite{N} is very useful for giving a concrete description of the KMS states when either the groupoids involved only have countably many points in the unit space with non-trivial isotropy, or when it is possible to prove that all KMS states factors through the conditional expectation $P$. To illustrate how Theorem \ref{tmain} can be used in more complex cases, we will now use it to analyse the KMS states for Cuntz-Krieger $C^{*}$-algebras of finite higher-rank graphs without sources, where neither of the two classical approaches suffices.

\section{Background on higher-rank graphs}

\subsection{The Cuntz-Krieger $C^{*}$-algebras of higher-rank graphs} \label{s61}
For $k \in \mathbb{N}$ we always denote the standard basis for $\mathbb{N}^{k}$ by $\{e_{1}, e_{2}, \dots, e_{k}\}$, and for $n,m \in \mathbb{N}^{k}$ we write $n \leq m$ if $n_{i} \leq m_{i}$ for all $i=1,2, \dots , k$, and $n \vee m $ for the vector in $\mathbb{N}^{k}$ with $(n\vee m)_{i}=\max\{n_{i}, m_{i}\}$ for all $i$. A higher-rank graph of rank $k \in \mathbb{N}$ is a pair $(\Lambda , d)$ consisting of a countable small category $\Lambda$ and a functor $d: \Lambda \to \mathbb{N}^{k}$ which satisfies the factorisation property: for every $\lambda \in \Lambda$ and every decomposition $d(\lambda)=n+m$, $n,m \in \mathbb{N}^{k}$, there exists unique $\mu, \nu \in \Lambda$ with $d(\mu)=n$, $d(\nu)=m$ and $\lambda = \mu \nu$. For all $n \in \mathbb{N}^{k}$ we write $\Lambda^{n}:=d^{-1}(\{n\})$ and we identify the objects of $\Lambda$ with $\Lambda^{0} \subseteq \Lambda$ and call these vertices. Elements of $\Lambda$ are referred to as paths, and we use the range and source maps $r,s: \Lambda \to \Lambda^{0}$ to make sense of the start $s(\lambda)$ and the end $r(\lambda)$ of paths $\lambda$ in  $\Lambda$. For $0\leq l \leq n \leq m$ and $\lambda \in \Lambda^{m}$ we denote by $\lambda(0,l)\in \Lambda^{l}$, $\lambda(l,n) \in \Lambda^{n-l}$ and $\lambda(n,m)\in \Lambda^{m-n}$ the unique paths with $\lambda=\lambda(0,l)\lambda(l,n)\lambda(n,m)$. We often abbreviate and write $\Lambda$ for a higher-rank graph of rank $k$ and simply call it a $k$-graph. For any $X,Y \subseteq \Lambda$ we write $XY$ for the set:
$$
XY:=\{ \mu\lambda \ | \ \mu \in X, \lambda \in Y \text{ and } s(\mu)=r(\lambda) \}
$$
and we use variations on this theme to define sets throughout the next sections. We say that a $k$-graph $\Lambda$ is \emph{finite} if $\Lambda^{n}$ is finite for all $n \in \mathbb{N}^{k}$ and \emph{without sources} if $v\Lambda^{n} \neq \emptyset$ for all $n \in \mathbb{N}^{k}$ and $v \in \Lambda^{0}$. We can define a relation on $\Lambda^{0}$ by defining $v \leq w$ if $v \Lambda w \neq \emptyset$, i.e. if there is a path starting in $w$ and ending in $v$. This gives an equivalence relation $\sim$ on $\Lambda^{0}$ by defining $v \sim w$ if $v \leq w$ and $w \leq v$. We call these equivalence classes \emph{components}, and more specifically we call a component $C$ \emph{trivial} if $C\Lambda C=\{v\}$ for some $v \in \Lambda^{0}$ and \emph{non-trivial} if this is not the case. The relation $\leq$ descends to a partial order on the set of components, i.e. $C \leq D$ if $C\Lambda D \neq \emptyset$. For sets $V \subseteq \Lambda^{0}$ we define the \emph{closure} of $V$ to be $\overline{V}=\{ w \in \Lambda^{0}\ | \ w \Lambda V \neq \emptyset \}$ and the \emph{hereditary closure} to be $\widehat{V}=\{ w \in \Lambda^{0}\ | \ V \Lambda w \neq \emptyset \}$. For any set $S$ that is closed, hereditary closed or a component we can define a new higher-rank graph $(\Lambda_{S}, d)$ where $\Lambda_{S}=S \Lambda S$. A graph is called \emph{strongly connected} if $v\Lambda w \neq \emptyset$ for all $v,w \in \Lambda^{0}$, and we notice that $(\Lambda_{C},d)$ is a strongly connected graph for all components $C$ of $\Lambda^{0}$.

For a finite $k$-graph $\Lambda$ we can define the $\Lambda^{0}\times \Lambda^{0}$ vertex matrices $A_{1}, \dots , A_{k}$ with entries $A_{i}(v,w)=\lvert v\Lambda^{e_{i}} w\rvert$. The factorisation property implies that these commute, and defining $A^{n}=\prod_{i=1}^{k} A_{i}^{n_{i}}$ for each $n \in \mathbb{N}^{k}$ one can prove that $A^{n}(v,w)=\lvert v \Lambda^{n} w\rvert$.

\begin{defn}
Let $\Lambda$ be a finite $k$-graph without sources. A Cuntz-Krieger $\Lambda$-family is a set of partial isometries $\{t_{\lambda} \ | \ \lambda \in \Lambda \}$ in a $C^{*}$-algebra satisfying:
\begin{itemize}
\item[(CK1)] $\{t_{v} \ | \ v \in \Lambda^{0} \}$ is a set of mutually orthogonal projections.
\item[(CK2)] $t_{\lambda} t_{\gamma}=t_{\lambda\gamma}$ for all $\lambda, \gamma \in \Lambda$ with $r(\gamma)=s(\lambda)$.
\item[(CK3)] $t_{\lambda}^{*} t_{\lambda}=t_{s(\lambda)}$ for all $\lambda \in \Lambda$.
\item[(CK4)] $t_{v}=\sum_{\lambda \in v\Lambda^{n}} t_{\lambda}t_{\lambda}^{*}$ for all $v \in \Lambda^{0}$ and $n \in \mathbb{N}^{k}$.
\end{itemize}
We let $C^{*}(\Lambda)$ denote the $C^{*}$-algebra generated by a universal Cuntz-Krieger $\Lambda$-family.
\end{defn}
To ease notation we define the projection $p_{v}:=t_{v}$ for all $v \in \Lambda^{0}$, and we remind the reader that $C^{*}(\Lambda)=\overline{\text{span}} \{t_{\lambda} t_{\gamma}^{*} \ | \ \lambda ,  \gamma \in \Lambda, \ s(\lambda) = s(\gamma)\}$ and that the universal property of $C^{*}(\Lambda)$ guarantees a strongly continuous action $\gamma: \mathbb{T}^{k} \to \text{Aut}(C^{*}(\Lambda))$ by specifying that:
\begin{equation} \label{egauge}
\gamma_{z}(t_{\lambda})=z^{d(\lambda)} t_{\lambda} =\prod_{i=1}^{k} z_{i}^{d(\lambda)_{i}} t_{\lambda} \qquad \forall z \in  \mathbb{T}^{k} , \quad \forall \lambda \in \Lambda .
\end{equation}
By setting:
$$
\Lambda^{\text{min}}(\lambda, \gamma):= \{ (\delta, \nu) \in \Lambda \times \Lambda \ | \ \lambda\delta = \gamma\nu \text{ and } d(\lambda\delta) = d(\lambda) \vee d(\gamma)  \}
$$
for any $\lambda, \gamma \in \Lambda$, we furthermore have the equality:
\begin{equation}\label{e62}
t_{\lambda}^{*} t_{\gamma} = \sum_{(\delta, \nu) \in \Lambda^{\text{min}}(\lambda, \gamma)} t_{\delta} t_{\nu}^{*} .
\end{equation}

\subsection{KMS states on Cuntz-Krieger algebras of higher-rank graphs}
Let $\Lambda$ be a finite $k$-graph without sources. For any $r \in \mathbb{R}^{k}$ we can define a map $\mathbb{R} \ni t \to (e^{itr_{1}}, \dots , e^{itr_{k}}) \in \mathbb{T}^{k}$. Composing this map with the action $\gamma$ from \eqref{egauge} yields a continuous one-parameter group $\{\alpha^{r}_{t}\}_{t \in \mathbb{R}}$ satisfying:
$$
\alpha_{t}^{r}(t_{\lambda}t_{\gamma}^{*}) = \prod_{l=1}^{k} (e^{itr_{l}})^{d(\lambda)_{l}}  \prod_{l=1}^{k} (e^{-itr_{l}})^{d(\gamma)_{l}}    t_{\lambda}t_{\gamma}^{*} =  e^{it r \cdot ( d(\lambda)-d(\gamma))} t_{\lambda}t_{\gamma}^{*}
$$
for all $\lambda, \gamma \in \Lambda$. We are interested in determining the $\beta$-KMS states for all $\beta \in \mathbb{R}$ and all $C^{*}$-dynamical systems $(C^{*}(\Lambda), \alpha^{r})$ where $r \in \mathbb{R}^{k}$, and for this it suffices to check the KMS condition on pairs of elements on the form $t_{\lambda}t_{\gamma}^{*}$ with $\lambda, \gamma \in \Lambda$.

\subsection{The path groupoid for a finite higher-rank graph without sources} \label{s63} For a finite $k$-graph $\Lambda$ without sources we can realise $C^{*}(\Lambda)$ as a groupoid $C^{*}$-algebra. To do this, we first need to introduce the infinite path space $\Lambda^{\infty}$ of $\Lambda$. The standard example of a $k$-graph $\Omega_{k}$ is constructed by considering morphisms:
$$
\Omega_{k}: = \{(n,m)\in \mathbb{N}^{k} \times \mathbb{N}^{k} \ | \ n \leq m\}
$$
and objects $\Omega_{k}^{0}:=\mathbb{N}^{k}$ and then defining $s(n,m)=m$, $r(n,m)=n$, $d(n,m)=m-n$ and $(n,m) (m,q)=(n,q)$. An infinite path in the $k$-graph $\Lambda$ is then a functor $x: \Omega_{k} \to \Lambda$ that intertwines the degree maps, and we denote the set of infinite paths in $\Lambda$ by $\Lambda^{\infty}$. Defining for each $\lambda \in \Lambda$ a set $Z(\lambda)=\{x \in \Lambda^{\infty} \ | \ x(0,d(\lambda))=\lambda \}$ we get a basis $\{Z(\lambda)\}_{\lambda \in \Lambda}$ of compact and open sets, making $\Lambda^{\infty}$ a second countable compact Hausdorff space. For each $p \in \mathbb{N}^{k}$ we can define a continuous map $\sigma^{p} : \Lambda^{\infty} \to \Lambda^{\infty}$ by setting $\sigma^{p}(x)$ to be the infinite path $\sigma^{p}(x)(n,m)=x(n+p, m+p)$ for all $(n,m)\in \Omega_{k}$, and for any $p,q \in \mathbb{N}^{k}$ and $x \in \Lambda^{\infty}$ we then have that $\sigma^{p}(\sigma^{q}(x))=\sigma^{p+q}(x)=\sigma^{q}(\sigma^{p}(x))$. Setting $r(x)=x((0,0))$ for $x \in \Lambda^{\infty}$ we can compose  $\lambda \in \Lambda$ and $x \in \Lambda^{\infty}$ when $r(x)=s(\lambda)$ to get a new infinite path $\lambda x \in \Lambda^{\infty}$. Using $\Lambda^{\infty}$ we can now obtain the path groupoid by defining:
$$
\mathcal{G} = \left\{ (x,m-n,y)\in \Lambda^{\infty}\times \mathbb{Z}^{k} \times \Lambda^{\infty} \ | \ m,n \in \mathbb{N}^{k} \text{ and } \sigma^{m}(x)=\sigma^{n}(y) \right \}
$$
one can check that this is in fact a groupoid when defining composition as:
$$
(x,a,y)(y,b,z)=(x,a+b,z)
$$ 
and inversion by $(x,a,y)^{-1}=(y, -a , x)$ and we then obtain range and source maps satisfying $r(x,a,y)=(x,0,x)$ and $s(x,a,y)=(y,0,y)$. 
The groupoid $\mathcal{G}$ becomes a locally compact second countable Hausdorff \'etale groupoid when we consider a basis $\{Z(\lambda, \gamma) \ | \  \lambda, \gamma \in \Lambda , \ s(\lambda)=s(\gamma)  \}$ where:
$$
Z(\lambda, \gamma):= \{ (x,d(\lambda)-d(\gamma),z) \in \mathcal{G} \ | \ x \in Z(\lambda), \ z \in Z(\gamma), \ \sigma^{d(\lambda)}(x)=\sigma^{d(\gamma)}(z)    \} .
$$
We can therefore consider the groupoid $C^{*}$-algebra $C^{*}(\mathcal{G})$, and it follows from Corollary 3.5 in \cite{KP} that $ C^{*}(\Lambda) \simeq C^{*}(\mathcal{G}) $ under an isomorphism that maps $t_{\lambda}t_{\gamma}^{*}$ to $1_{Z(\lambda, \gamma)}$. Since $\mathcal{G}^{(0)} \simeq \Lambda^{\infty}$ we will identify the two spaces $C(\Lambda^{\infty})$ and $C(\mathcal{G}^{(0)})$. The action introduced in \eqref{egauge} is then the same as the gauge-action introduced in equation \eqref{e2}, and the continuous one-parameter group $\{\alpha^{r}_{t}\}_{t \in \mathbb{R}}$ obtained using a vector $r \in \mathbb{R}^{k}$ is the same as the one obtained by considering the continuous groupoid homomorphism $c_{r}: \mathcal{G} \to \mathbb{R}$ given by $c_{r}(x,n,y):=r \cdot n$.

\section{Harmonic vectors and KMS states}
In this section we start our analysis of the KMS states by describing the gauge-invariant KMS states. To do this, we will first describe a bijective correspondence between the gauge-invariant KMS states and certain \emph{harmonic vectors} over $\Lambda^{0}$:

\begin{defn}
Let $\Lambda$ be a finite $k$-graph without sources, $\beta \in \mathbb{R}$ and $r \in \mathbb{R}^{k}$. If $\psi \in [0, \infty[^{\Lambda^{0}}$ is a vector of unit 1-norm, i.e. $\sum_{v} |\psi_{v}| =1 $, and $\psi$ satisfies that:
$$
A_{i} \psi=e^{\beta r_{i}} \psi \qquad \text{for all  } i=1,2, \dots , k
$$
we call $\psi$ a $\beta$-harmonic vector for $\alpha^{r}$.
\end{defn}

\begin{lemma}\label{l72}
Let $\Lambda$ be a finite $k$-graph without sources, $\beta \in \mathbb{R}$ and $r \in \mathbb{R}^{k}$. Let $\omega$ be a $\beta$-KMS state for $\alpha^{r}$, then the vector:
$$
\{ \omega(p_{v})\}_{v \in \Lambda^{0}}
$$
is a $\beta$-harmonic vector for $\alpha^{r}$.
\end{lemma}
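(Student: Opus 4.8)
The plan is to verify the two defining properties of a $\beta$-harmonic vector for the vector $\psi := \{\omega(p_v)\}_{v \in \Lambda^0}$: that it has unit $1$-norm and that $A_i \psi = e^{\beta r_i}\psi$ for each $i$. Since $\omega$ is a state and the $p_v$ are mutually orthogonal projections summing (by (CK4) with $n=0$, or rather by the fact that the unit of $C^*(\Lambda)$ is $\sum_{v}p_v$) to the identity, we get $\sum_v \omega(p_v) = \omega(1) = 1$, and each $\omega(p_v) \geq 0$ because $p_v$ is a positive element. This gives $\psi \in [0,\infty[^{\Lambda^0}$ with $\sum_v |\psi_v| = 1$ immediately.

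For the harmonicity relation, fix $i \in \{1,\dots,k\}$ and $v \in \Lambda^0$. The idea is to apply (CK4) with $n = e_i$ to write $p_v = \sum_{\lambda \in v\Lambda^{e_i}} t_\lambda t_\lambda^*$, so that $\omega(p_v) = \sum_{\lambda \in v\Lambda^{e_i}} \omega(t_\lambda t_\lambda^*)$. Now I would use the KMS condition on the pair $x = t_\lambda$, $y = t_\lambda^*$: since $\alpha^r_{i\beta}(t_\lambda) = e^{i(i\beta) r\cdot d(\lambda)} t_\lambda = e^{-\beta r\cdot d(\lambda)} t_\lambda = e^{-\beta r_i} t_\lambda$ (as $d(\lambda) = e_i$), the KMS relation $\omega(t_\lambda t_\lambda^*) = \omega(t_\lambda^* \alpha^r_{i\beta}(t_\lambda)) = e^{-\beta r_i}\omega(t_\lambda^* t_\lambda) = e^{-\beta r_i}\omega(p_{s(\lambda)})$ by (CK3). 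Summing over $\lambda \in v\Lambda^{e_i}$ yields
$$
\omega(p_v) = e^{-\beta r_i} \sum_{\lambda \in v\Lambda^{e_i}} \omega(p_{s(\lambda)}) = e^{-\beta r_i} \sum_{w \in \Lambda^0} |v\Lambda^{e_i} w| \, \omega(p_w) = e^{-\beta r_i}(A_i \psi)_v,
$$
using that $A_i(v,w) = |v\Lambda^{e_i}w|$. Rearranging gives $(A_i\psi)_v = e^{\beta r_i}\psi_v$ for every $v$, which is exactly the claimed eigenvector equation.

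I do not expect any real obstacle here; the only points requiring a little care are (i) justifying that $t_\lambda$ and $t_\lambda^*$ lie in (or can be taken in) a norm-dense $\alpha^r$-invariant $*$-algebra of entire analytic elements on which the KMS condition may be tested — this is standard since the $t_\lambda t_\gamma^*$ span a dense $*$-subalgebra and each such element is entire analytic for the gauge-type dynamics $\alpha^r$, with $\alpha^r_z(t_\lambda t_\gamma^*) = e^{iz\, r\cdot(d(\lambda)-d(\gamma))} t_\lambda t_\gamma^*$ extending holomorphically to all $z \in \mathbb{C}$ — and (ii) noting that the sum in (CK4) is finite because $\Lambda$ is finite, so all manipulations with $\omega$ (linearity, the finite sum) are legitimate. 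With these in place the computation above is complete.
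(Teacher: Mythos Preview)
Your proof is correct and follows essentially the same approach as the paper: both use (CK4) with $n=e_i$ to expand $p_v$, apply the KMS relation $\omega(t_\lambda t_\lambda^*)=\omega(t_\lambda^*\alpha^r_{i\beta}(t_\lambda))=e^{-\beta r_i}\omega(p_{s(\lambda)})$, and then collect terms by $s(\lambda)$ to obtain $(A_i\psi)_v=e^{\beta r_i}\psi_v$. Your additional remarks on analyticity of the spanning elements and finiteness of the sums are fine but not needed beyond what the paper already assumes.
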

\begin{proof}
Set $\psi_{w}=\omega(p_{w})$ for all $w \in \Lambda^{0}$, then clearly $\psi_{w} \in [0, \infty [^{\Lambda^{0}}$ is of unit 1-norm. Using $(CK4)$ we have for each $i \in \{1, 2, \dots, k\}$ and $v \in \Lambda^{0}$ that:
\begin{align*}
\psi_{v}&=\omega(p_{v})=\sum_{\lambda \in v\Lambda^{e_{i}}}\omega( t_{\lambda}t_{\lambda}^{*}) =\sum_{\lambda \in v\Lambda^{e_{i}}}\omega( t_{\lambda}^{*}\alpha^{r}_{i\beta}(t_{\lambda})) =\sum_{\lambda \in v\Lambda^{e_{i}}} e^{-\beta r\cdot d(\lambda)}\omega( t_{\lambda}^{*}t_{\lambda})\\
&=e^{-\beta r_{i}} \sum_{w \in \Lambda^{0}}\sum_{\lambda \in v\Lambda^{e_{i}}w} \omega( p_{w}) 
=e^{-\beta r_{i}} \sum_{w \in \Lambda^{0}} A_{i}(v,w)\psi_{w} =e^{-\beta r_{i}} (A_{i}\psi)_{v} 
\end{align*}
proving the Lemma.
\end{proof}
Inspired by Proposition 8.1 in \cite{aHLRS} we can now associate a measure to a $\beta$-harmonic vector.
\begin{prop}
Let $\Lambda$ be a finite $k$-graph without sources, $\beta \in \mathbb{R}$ and $r \in \mathbb{R}^{k}$. Let $\psi$ be a $\beta$-harmonic vector for $\alpha^{r}$, then there exists a unique Borel probability measure $M_{\psi}$ on $\Lambda^{\infty}$ satisfying:
$$
M_{\psi}(Z(\lambda))=e^{-\beta r\cdot d(\lambda)} \psi_{s(\lambda)} \qquad \forall \lambda \in \Lambda .
$$

\end{prop}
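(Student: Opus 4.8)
The plan is to build the measure $M_{\psi}$ on the path space $\Lambda^{\infty}$ by prescribing its values on the cylinder sets $Z(\lambda)$, checking that this prescription is consistent, and then invoking a standard extension/uniqueness theorem. First I would observe that the sets $\{Z(\lambda)\}_{\lambda \in \Lambda}$ form a basis of compact open sets for $\Lambda^{\infty}$ which is closed under finite intersections in a controlled way: using the factorisation property, for $\lambda, \gamma \in \Lambda$ the intersection $Z(\lambda) \cap Z(\gamma)$ is either empty or equal to $Z(\nu)$ for a unique path $\nu$ with $d(\nu) = d(\lambda) \vee d(\gamma)$ extending both $\lambda$ and $\gamma$. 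Hence the candidate set function $Z(\lambda) \mapsto e^{-\beta r \cdot d(\lambda)} \psi_{s(\lambda)}$ is defined on a semiring-like generating family, and the whole problem reduces to verifying finite additivity under the decompositions that the $k$-graph structure forces.

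The key computation is the following: for any $\lambda \in \Lambda$ and any $n \in \mathbb{N}^{k}$ we have the disjoint decomposition $Z(\lambda) = \bigsqcup_{\mu \in \lambda\Lambda^{n}} Z(\mu)$, where $\lambda\Lambda^{n} = \{\lambda\nu \mid \nu \in s(\lambda)\Lambda^{n}\}$ and the pieces are disjoint because an infinite path is determined by its finite truncations. Applying the proposed formula to both sides, the required identity is
$$
e^{-\beta r \cdot d(\lambda)} \psi_{s(\lambda)} = \sum_{\mu \in \lambda\Lambda^{n}} e^{-\beta r \cdot d(\mu)} \psi_{s(\mu)} = e^{-\beta r \cdot d(\lambda)} e^{-\beta r \cdot n} \sum_{w \in \Lambda^{0}} A^{n}(s(\lambda), w)\, \psi_{w},
$$
so after cancelling $e^{-\beta r \cdot d(\lambda)}$ this is exactly the statement $\psi_{s(\lambda)} = e^{-\beta r \cdot n}(A^{n}\psi)_{s(\lambda)}$, which follows by iterating the harmonicity relations $A_{i}\psi = e^{\beta r_{i}}\psi$ and using that the $A_{i}$ commute, so that $A^{n}\psi = e^{\beta r \cdot n}\psi$. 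This shows the prescription is consistent on all cylinder sets simultaneously refinable to a common degree, which is the heart of the matter.

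From here the argument is routine and I would only sketch it: one checks that an arbitrary finite disjoint union of cylinder sets whose union is again a cylinder set can be refined (using the $\vee$ operation and the above decomposition) to a common level, so the set function is finitely additive on the algebra generated by the $Z(\lambda)$; since each $Z(\lambda)$ is compact and open, every such finitely additive function on this algebra of clopen sets is automatically countably additive (any countable cover of a compact set by opens has a finite subcover, so there are no nontrivial countable partitions to worry about), and Carathéodory's extension theorem produces a unique Borel measure $M_{\psi}$ on $\Lambda^{\infty}$ with the stated values; normalisation $M_{\psi}(\Lambda^{\infty}) = \sum_{v \in \Lambda^{0}} M_{\psi}(Z(v)) = \sum_{v} \psi_{v} = 1$ uses that $\psi$ has unit $1$-norm, so $M_{\psi}$ is a probability measure. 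Uniqueness is immediate since the $Z(\lambda)$ generate the Borel $\sigma$-algebra and are closed under the (partial) intersection described above, so a $\pi$-$\lambda$ / Dynkin argument pins down the measure. The only mild obstacle is bookkeeping the factorisation property carefully enough to justify the common-refinement step — i.e. that any two overlapping cylinders meet in a cylinder at level $d(\lambda) \vee d(\gamma)$ — but this is exactly the content of the factorisation property and the definition of $\Lambda^{\min}(\lambda,\gamma)$ recalled in Section \ref{s61}.
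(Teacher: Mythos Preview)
Your proof is correct and shares its essential core with the paper's argument: both verify the same consistency relation by computing
\[
\sum_{\nu \in s(\lambda)\Lambda^{n}} e^{-\beta r \cdot (d(\lambda)+n)} \psi_{s(\nu)} = e^{-\beta r \cdot d(\lambda)} e^{-\beta r \cdot n} (A^{n}\psi)_{s(\lambda)} = e^{-\beta r \cdot d(\lambda)} \psi_{s(\lambda)},
\]
using the harmonicity of $\psi$. The packaging of the extension step differs slightly. The paper realises $\Lambda^{\infty}$ as the inverse limit $\varprojlim(\Lambda^{m}, \pi_{m,n})$ of the finite sets $\Lambda^{m}$ under truncation, defines finite measures $M_{m}(\{\lambda\}) = e^{-\beta r \cdot m}\psi_{s(\lambda)}$ on each level, checks the Kolmogorov-type compatibility $M_{n}\circ \pi_{m,n}^{-1} = M_{m}$ (which is exactly your computation), and then cites a ready-made inverse-limit lemma (Lemma~5.2 of \cite{aHKR2}) to produce $M_{\psi}$. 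Your route via finite additivity on the clopen algebra plus compactness and Carath\'eodory is equally standard and arguably more self-contained; the inverse-limit formulation has the minor advantage that the ``common refinement'' bookkeeping you flag is absorbed into the projective-system formalism, so one never has to argue separately that arbitrary finite disjoint covers of a cylinder by cylinders can be pushed to a common degree.
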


\begin{proof}
For all $m,n \in \mathbb{N}^{k}$ with $m \leq n$ we define $\pi_{m,n}: \Lambda^{n} \to \Lambda^{m}$ by $\pi_{m,n}(\lambda)=\lambda(0,m)$. Since $\Lambda$ is without sources the maps $\pi_{m,n}$ are surjective. Giving $\Lambda^{n}$ the discrete topology for each $n \in \mathbb{N}^{k}$, it follows that $(\Lambda^{m}, \pi_{m,n})$ is an inverse system of compact topological spaces and continuous surjective maps, and hence we get a topological space $\varprojlim(\Lambda^{m}, \pi_{m,n})$. It is straightforward to check that 
$$
\Lambda^{\infty}\ni x \to \{x(0,m)\}_{m \in \mathbb{N}^{k}} \in \varprojlim(\Lambda^{m}, \pi_{m,n})
$$
is a homeomorphism. For each $m\in \mathbb{N}^{k}$ we now define a measure $M_{m}$ on $\Lambda^{m}$ by:
$$
M_{m}(S)=e^{-\beta r \cdot m} \sum_{\lambda \in S} \psi_{s(\lambda)} \qquad \text{for } S \subseteq \Lambda^{m} .
$$
For $m \leq n$ and $\lambda \in \Lambda^{m}$ we have:
\begin{align*}
&M_{n} (\pi^{-1}_{m,n}(\{\lambda\})) =e^{-\beta r \cdot n} \sum_{\nu \in \pi^{-1}_{m,n}(\{\lambda\})} \psi_{s(\nu)}
=e^{-\beta r \cdot n} \sum_{\alpha \in s(\lambda)\Lambda^{n-m}} \psi_{s(\alpha)} \\
&=e^{-\beta r \cdot n} \sum_{w \in \Lambda^{0}}A^{n-m}(s(\lambda),w)\psi_{w}
=e^{-\beta r \cdot n} (A^{n-m}\psi)_{s(\lambda)}=e^{-\beta r \cdot m}\psi_{s(\lambda)}\\
&= M_{m}(\{\lambda\}) .
\end{align*}
Combining this calculation with Lemma 5.2 in \cite{aHKR2} gives us a regular Borel measure $M_{\psi}$ on $\Lambda^{\infty}$ such that:
$$
M_{\psi}(Z(\lambda))=M_{m}(\{\lambda\}) = e^{-\beta r \cdot m} \psi_{s(\lambda)}=e^{-\beta r \cdot d(\lambda)} \psi_{s(\lambda)}
$$
for $\lambda \in \Lambda^{m}$. Since $\psi$ is of unit $1$-norm $M_{\psi}$ is a probability measure, and $M_{\psi}$ is clearly unique. 
\end{proof}

For each $M_{\psi}$ we define a state $\omega_{\psi}$ on $C^{*}(\Lambda)$ by:
$$
\omega_{\psi}(a) = \int_{\Lambda^{\infty}} P(a) \ d M_{\psi} .
$$

\begin{thm}\label{tvector}
Assume $\Lambda$ is a finite $k$-graph without sources, $\beta \in \mathbb{R}$ and $r \in \mathbb{R}^{k}$. The map $ \psi \to \omega_{\psi}$ is an affine bijection from the $\beta$-harmonic vectors for $\alpha^{r}$ to the gauge-invariant $\beta$-KMS states for $\alpha^{r}$.
\end{thm}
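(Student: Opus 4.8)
The plan is to work in the groupoid picture $C^{*}(\Lambda)\simeq C^{*}(\mathcal{G})$ of Section \ref{s63}, under which the gauge-action becomes the action $\Psi$ of $\widehat{\mathbb{Z}^{k}}\simeq\mathbb{T}^{k}$, the dynamics $\alpha^{r}$ becomes $\alpha^{c_{r}}$ with $c_{r}(x,n,y)=r\cdot n$, and $\omega_{\psi}$ becomes the functional $f\mapsto\int_{\mathcal{G}^{(0)}}P(f)\,dM_{\psi}$. From this last description $\omega_{\psi}$ is at once a state, and it is gauge-invariant: since $P(t_{\lambda}t_{\gamma}^{*})=\delta_{\lambda,\gamma}1_{Z(\lambda)}$, on which each $\gamma_{z}$ acts trivially, we have $P\circ\gamma_{z}=P$ and hence $\omega_{\psi}\circ\gamma_{z}=\omega_{\psi}$. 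It then remains to prove three things: (i) $\omega_{\psi}$ is a $\beta$-KMS state; (ii) every gauge-invariant $\beta$-KMS state has the form $\omega_{\psi}$; (iii) the map is injective and affine.

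For (i) with $\beta\neq 0$ I would first check that $M_{\psi}$ is $e^{-\beta c_{r}}$-quasi-invariant. It is enough to test the defining identity on the basic open bisections $W=Z(\lambda,\gamma)$ (an arbitrary open bisection is a countable union of these and the condition is local; equivalently one rephrases quasi-invariance as an identity of Radon measures on $\mathcal{G}$ and tests it on the functions $1_{Z(\lambda,\gamma)}$). For such $W$ one has $s(W)=Z(\gamma)$, $r(W)=Z(\lambda)$, and $c_{r}$ is constant on $W$ equal to $r\cdot(d(\lambda)-d(\gamma))$, so the identity reduces to $M_{\psi}(Z(\gamma))=e^{\beta r\cdot(d(\lambda)-d(\gamma))}M_{\psi}(Z(\lambda))$; this holds because $M_{\psi}(Z(\gamma))=e^{-\beta r\cdot d(\gamma)}\psi_{s(\gamma)}$, $M_{\psi}(Z(\lambda))=e^{-\beta r\cdot d(\lambda)}\psi_{s(\lambda)}$, and $s(\lambda)=s(\gamma)$ by definition of the basis. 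Thus $M_{\psi}$ is an $e^{-\beta c_{r}}$-quasi-invariant probability measure, and I apply Neshveyev's Theorem (Theorem 1.3 in \cite{N}) to the pair $(M_{\psi},\{\tau_{x}\}_{x\in\mathcal{G}^{(0)}})$, where $\tau_{x}$ is the canonical tracial state on $C^{*}(\mathcal{G}_{x}^{x})$, so $\tau_{x}(u_{g})=0$ for $g\neq x$. This field is $M_{\psi}$-measurable since $x\mapsto\sum_{g\in\mathcal{G}_{x}^{x}}f(g)\tau_{x}(u_{g})=f(x)$ is continuous, and it satisfies \eqref{htn} because, for $g\in\mathcal{G}_{x}^{x}$ and $h\in\mathcal{G}_{x}$, the element $hgh^{-1}$ is the unit of $\mathcal{G}_{r(h)}^{r(h)}$ precisely when $g$ is the unit of $\mathcal{G}_{x}^{x}$. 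The $\beta$-KMS state attached to this pair sends $f\in C_{c}(\mathcal{G})$ to $\int_{\mathcal{G}^{(0)}}f(x)\,dM_{\psi}(x)=\int_{\mathcal{G}^{(0)}}P(f)\,dM_{\psi}$, i.e. it is $\omega_{\psi}$; hence $\omega_{\psi}$ is a $\beta$-KMS state. For $\beta=0$ the identical argument applies with ``quasi-invariant'' read as ``invariant'' and Neshveyev's Theorem replaced by the classical description of the tracial states of $C^{*}(\mathcal{G})$; alternatively one can verify the KMS/tracial identity directly on the spanning elements $t_{\lambda}t_{\gamma}^{*}$ using \eqref{e62} and the harmonicity of $\psi$.

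For (ii), let $\omega$ be a gauge-invariant $\beta$-KMS state for $\alpha^{r}$. By Lemma \ref{l320} we have $\omega=\omega\circ P$, and by Lemma \ref{l72} the vector $\psi$ defined by $\psi_{v}=\omega(p_{v})$ is $\beta$-harmonic. The KMS condition together with $\alpha^{r}_{i\beta}(t_{\lambda})=e^{-\beta r\cdot d(\lambda)}t_{\lambda}$ yields $\omega(1_{Z(\lambda)})=\omega(t_{\lambda}t_{\lambda}^{*})=e^{-\beta r\cdot d(\lambda)}\omega(t_{\lambda}^{*}t_{\lambda})=e^{-\beta r\cdot d(\lambda)}\psi_{s(\lambda)}=M_{\psi}(Z(\lambda))$ for every $\lambda\in\Lambda$. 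Since the span of $\{1_{Z(\lambda)}:\lambda\in\Lambda\}$ is a unital, self-adjoint, point-separating subalgebra of $C(\Lambda^{\infty})$, it is dense, whence $\omega|_{C(\Lambda^{\infty})}=M_{\psi}$ and therefore $\omega=\omega\circ P=\int_{\Lambda^{\infty}}P(\cdot)\,dM_{\psi}=\omega_{\psi}$. This proves (ii); it also shows $\omega\mapsto(\omega(p_{v}))_{v}$ is inverse to $\psi\mapsto\omega_{\psi}$ (directly, $\omega_{\psi}(p_{v})=M_{\psi}(Z(v))=\psi_{v}$), giving injectivity. Affineness is immediate from the formula $M_{\psi}(Z(\lambda))=e^{-\beta r\cdot d(\lambda)}\psi_{s(\lambda)}$, since a Borel measure on $\Lambda^{\infty}$ is determined by its values on the $Z(\lambda)$ and $\omega_{\psi}$ depends linearly on $M_{\psi}$.

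The step I expect to need the most care is (i): checking that $M_{\psi}$ is $e^{-\beta c_{r}}$-quasi-invariant, and in particular the reduction of the quasi-invariance condition to the basic bisections $Z(\lambda,\gamma)$, together with the conceptually key (but technically routine) observation that the KMS states factoring through $P$ are exactly those assigned by Neshveyev's bijection to the field of canonical traces on the isotropy groups. Everything else — the converse direction, injectivity, affineness — is essentially forced by Lemmas \ref{l320} and \ref{l72}.
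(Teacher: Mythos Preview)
Your proof is correct, and for surjectivity, injectivity and affineness it coincides with the paper's argument essentially verbatim. The genuine difference is in step (i). The paper verifies the KMS identity directly on pairs $t_{\lambda}t_{\gamma}^{*}$, $t_{\delta}t_{\epsilon}^{*}$: using \eqref{e62} it expands both $\omega_{\psi}(t_{\lambda}t_{\gamma}^{*}t_{\delta}t_{\epsilon}^{*})$ and $\omega_{\psi}(t_{\delta}t_{\epsilon}^{*}\alpha^{r}_{i\beta}(t_{\lambda}t_{\gamma}^{*}))$ as sums over $\Lambda^{\min}$-sets and then exhibits an explicit bijection $(\eta,\nu)\mapsto(\nu,\eta)$ between the two index sets, after first arranging $d(\delta)\geq d(\gamma)$ and $d(\epsilon)\geq d(\lambda)$ via (CK4). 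Your route---checking that $M_{\psi}$ is $e^{-\beta c_{r}}$-quasi-invariant on the basic bisections $Z(\lambda,\gamma)$ and then invoking Neshveyev's theorem with the trace field $\tau_{x}(u_{g})=\delta_{g,x}$---is more conceptual, fits the groupoid viewpoint of the paper, and sidesteps the combinatorics entirely. The paper's direct computation, by contrast, is self-contained and treats all $\beta\in\mathbb{R}$ uniformly, whereas Neshveyev's theorem as quoted here is stated only for $\beta\neq 0$, so your $\beta=0$ case remains a sketch. One minor point: your reduction of quasi-invariance to the $Z(\lambda,\gamma)$ is valid, but the cleanest justification is the one you put in parentheses---rewrite the condition as an identity of Radon measures on $\mathcal{G}$ and test it on the dense span of the indicators $1_{Z(\lambda,\gamma)}$---rather than the ``countable union'' phrasing, which would need a disjointification step.
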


\begin{proof}
Let $\psi$ be a $\beta$-harmonic vector for $\alpha^{r}$ and let $M_{\psi}$ be the corresponding Borel probability measure on $\Lambda^{\infty}$. Since the gauge-action fixes $C(\Lambda^{\infty}) $ it follows that $\omega_{\psi}$ is a gauge-invariant state. We will now argue that it is a $\beta$-KMS state for $\alpha^{r}$, so let $\lambda, \gamma, \delta, \epsilon \in \Lambda$ with $s(\lambda)=s(\gamma)$ and $s(\delta)=s(\epsilon)$ with $d(\delta) \geq d(\gamma)$ and $d(\epsilon) \geq d(\lambda)$. Using equation \eqref{e62} we have that:
$$
\omega_{\psi}(t_{\lambda}t_{\gamma}^{*}t_{\delta}t_{\epsilon}^{*}) 
= \sum_{(\eta, \nu)\in \Lambda^{\text{min}}(\gamma ,\delta)} \omega_{\psi}(t_{\lambda\eta} t_{\epsilon \nu}^{*})
= \sum_{(\eta, \nu)\in \Lambda^{\text{min}}(\gamma ,\delta) , \ \lambda\eta= \epsilon \nu} e^{-\beta r \cdot d(\lambda \eta)} \psi_{s(\eta)} .
$$
On the other hand:
\begin{align*}
\omega_{\psi}(t_{\delta}t_{\epsilon}^{*}\alpha_{i \beta}^{r}(t_{\lambda}t_{\gamma}^{*})) 
&= e^{-\beta r \cdot(d(\lambda)-d(\gamma))} \sum_{(\kappa, \tau)\in \Lambda^{\text{min}}(\epsilon ,\lambda)} \omega_{\psi}(t_{\delta \kappa} t_{\gamma \tau}^{*}) \\
&= e^{-\beta r \cdot(d(\lambda)-d(\gamma))} \sum_{(\kappa, \tau)\in \Lambda^{\text{min}}(\epsilon ,\lambda) , \  \delta \kappa = \gamma \tau} e^{-\beta r \cdot d(\gamma \tau)} \psi_{s(\tau)} \\
&=  \sum_{(\kappa, \tau)\in \Lambda^{\text{min}}(\epsilon ,\lambda) , \  \delta \kappa = \gamma \tau} e^{-\beta r \cdot d(\lambda \tau)} \psi_{s(\tau)} 
\end{align*}
Now we claim that $(\eta, \nu) \to ( \nu, \eta)$ is a bijection from $\{(\eta, \nu)\in \Lambda^{\text{min}}(\gamma ,\delta) \ | \ \lambda\eta= \epsilon \nu \}$ to $\{ (\kappa, \tau)\in \Lambda^{\text{min}}(\epsilon ,\lambda) \ | \  \delta \kappa = \gamma \tau \}$. To see this, notice that by assumption $d(\gamma) \vee d(\delta)=d(\delta)$ and $d(\epsilon) \vee d(\lambda) = d(\epsilon)$, so $d(\nu)=0$ and:
$$
 d(\lambda \eta) =d(\epsilon \nu)= d(\epsilon)+d(\nu)=d(\epsilon)=d(\epsilon) \vee d(\lambda).
$$
So $(\nu , \eta) \in \Lambda^{\text{min}}(\epsilon , \lambda)$ and by choice $\delta \nu = \gamma \eta$, proving that the map is well defined. It is straightforward to check that it has an inverse given by $(\kappa, \tau) \to (\tau , \kappa)$ and hence it is a bijection. This implies that:
$$
 \omega_{\psi}(t_{\lambda}t_{\gamma}^{*}t_{\delta}t_{\epsilon}^{*}) 
= \sum_{(\eta, \nu)\in \Lambda^{\text{min}}(\gamma ,\delta) , \ \lambda\eta= \epsilon \nu} e^{-\beta r \cdot d(\lambda \eta)} \psi_{s(\eta)}
= \sum_{ (\kappa, \tau)\in \Lambda^{\text{min}}(\epsilon ,\lambda) , \  \delta \kappa = \gamma \tau} e^{-\beta r \cdot d(\lambda \tau)} \psi_{s(\tau)} .
$$
This proves that $\omega_{\psi}$ satisfies the $\beta$-KMS condition for such pairs $t_{\lambda}t_{\gamma}^{*}$, $t_{\delta}t_{\epsilon}^{*}$. For such a pair not necessarily satisfying $d(\delta) \geq d(\gamma)$ and $d(\epsilon) \geq d(\lambda)$, taking a large $n$ and using $(CK4)$ yield:
$$
\omega_{\psi}(t_{\lambda}t_{\gamma}^{*}t_{\delta}t_{\epsilon}^{*}) = \sum_{\upsilon \in s(\delta)\Lambda^{n}} \omega_{\psi}(t_{\lambda}t_{\gamma}^{*}t_{\delta \upsilon}t_{\epsilon \upsilon}^{*}) 
= \sum_{\upsilon \in s(\delta)\Lambda^{n}} \omega_{\psi}(t_{\delta \upsilon}t_{\epsilon\upsilon}^{*} \alpha_{i \beta}^{r}(t_{\lambda}t_{\gamma}^{*})) 
=\omega_{\psi}(t_{\delta}t_{\epsilon}^{*}\alpha_{i \beta}^{r}(t_{\lambda}t_{\gamma}^{*}))
$$
proving that $\omega_{\psi}$ is a $\beta$-KMS state for $\alpha^{r}$. 

So $\psi \to \omega_{\psi}$ is well-defined. For injectivity, notice that:
$$
\omega_{\psi} (p_{v})=M_{\psi}(Z(v))=\psi_{v}
$$
by definition of $M_{\psi}$. To prove that it is surjective, take a gauge-invariant $\beta$-KMS state for $\alpha^{r}$, say $\omega$. It follows from Lemma \ref{l72} that setting $\psi_{v}=\omega(p_{v})$ then $\psi$ is a $\beta$-harmonic vector for $\alpha^{r}$. It follows from Lemma \ref{l320} that $\omega = \omega \circ P$, so $\omega$ is given by a Borel probability measure $M$ on $\Lambda^{\infty}$. Since $\omega$ is a KMS state we have:
$$
M(Z(\lambda))=\omega(t_{\lambda}t_{\lambda}^{*}) = e^{-\beta r \cdot d(\lambda)}\omega(t_{\lambda}^{*}t_{\lambda}) = e^{-\beta r \cdot d(\lambda)}\psi_{s(\lambda)} = M_{\psi}(Z(\lambda))
$$
proving that $\omega_{\psi} = \omega$, and hence surjectivity.
\end{proof}

\subsection{Decomposition of Harmonic vectors}

To describe the gauge-invariant KMS states Theorem \ref{tvector} informs us that it is sufficient to analyse the set of harmonic vectors, which we will do in the following. It turns out, much like in the case for $1$-graphs, that the set of harmonic vectors is a finite simplex, and that the extremal points in this simplex arise from certain components in the graph, see e.g. \cite{aHLRS1} or \cite{CT} for the $1$-graph case (but be aware that \cite{CT} uses a different convention for traversing paths). The technique used in \cite{aHLRS1} required that the vertex set was ordered such that the vertex matrix was block upper diagonal, this is however difficult to do for graphs of rank $k>1$, since one has to juggle numerous vertex matrices at once. To overcome this problem we define a new matrix $A_{F}$ that incorporates all the vertex matrices as follows: Let $F$ be a finite sequence $\{a_{1}, a_{2}, \dots , a_{m}\}$ of elements in $\mathbb{N}^{k}\setminus \{0\}$ (i.e. $F \in \prod_{i=1}^{m} (\mathbb{N}^{k}\setminus \{0\})$ for some $m \in \mathbb{N}$) and set:
$$
A_{F}:=\sum_{n \in F} A^{n} = \sum_{j=1 }^{m} A^{a_{j}} .
$$
Our reason for considering $F$ as a sequence is that we allow for the same vector to occur multiple times in $F$. We call such a set $F$ \emph{well chosen} if for all $v,w \in \Lambda^{0}$, $A_{F}(v,w)>0$ if and only if $v \Lambda^{l} w \neq \emptyset$ for some $l \in \mathbb{N}^{k} \setminus \{0\}$. Since $A^{n}(v,w)=|v\Lambda^{n}w|$ it follows that there always exist a well chosen set, and if $F$ is well chosen and $S=\{b_{1}, \dots b_{q}\}$ is a finite sequence in $\mathbb{N}^{k} \setminus \{0\}$, then the concatenation 
$$
F \cup S:=\{a_{1}, \dots , a_{m}, b_{1}, \dots, b_{q}\} \in \prod_{i=1}^{m+q} (\mathbb{N}^{k}\setminus \{0\})
$$
of the two sequences is a well chosen set as well. Similar to the strongly connected graph case it turns out that there is a connection between eigenvectors for $A_{F}$ and eigenvectors for the vertex matrices $A_{i}$, $i=1, \dots ,k$, c.f. Proposition 3.1 in \cite{aHLRS}. Given a $k$-graph $\Lambda$, a $\Lambda^{0} \times \Lambda^{0}$ matrix $B$ and $S,R \subseteq \Lambda^{0}$ we will write $B^{R,S}$ for the matrix $B$ restricted to the rows $R$ and columns $S$, and when we in the following write $A_{F}^{R,S}$ we specifically mean $(A_{F})^{R,S}$. Set $B^{S}:=B^{S,S}$. For vectors $x$ we will denote the restriction to a set $S$ by $x|_{S}$.

\begin{defn}
Let $F$ be a well chosen set for a finite $k$-graph $\Lambda$ without sources. We say that a non-trivial component $C$ is $F$-harmonic if either $\overline{C} \setminus C = \emptyset$ or:
$$
\rho(A_{F}^{C}) > \rho(A_{F}^{\overline{C} \setminus C}) .
$$
We call a component $C$ positive if $\rho(A_{i}^{C})>0$ for all $i \in \{1, 2, \dots , k\}$.
\end{defn}

Notice that if $C$ is a non-trivial component and $F$ is well chosen then $A_{F}^{C}$ is a strictly positive integer matrix and hence $\rho(A_{F}^{C})>0$.

\begin{lemma}\label{lC}
Let $F$ be a well chosen set for a finite $k$-graph $\Lambda$ without sources, and let $C$ be an $F$-harmonic component. Then there exists a unique vector $x_{F}^{C} \in [0, \infty[^{\Lambda^{0}}$ of unit 1-norm that satisfies:
\begin{enumerate}
\item $A_{F}  x_{F}^{C}= \rho(A_{F}^{C}) x_{F}^{C}$
\item $(x_{F}^{C})_{v}= 0$ for $v \notin \overline{C}$.
\end{enumerate}
This vector will furthermore satisfy that $(x_{F}^{C})_{v}>0$ for all $v \in \overline{C}$ and that $x_{F}^{C}|_{C}=cx$ where $c>0$ and $x$ is the unimodular Perron-Frobenious eigenvector for $A_{F}^{C}$.
\end{lemma}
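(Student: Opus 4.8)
The plan is to order the vertices of $\Lambda$ so that $A_{F}$ becomes block lower-triangular, and then to build $x_{F}^{C}$ block by block out of the Perron--Frobenius data of $A_{F}^{C}$. First I would record the combinatorial input: because $F$ is well chosen, $A_{F}(v,w)>0$ holds precisely when there is a path of non-zero degree from $w$ to $v$, and from this together with the definition $\overline{C}=\{w\in\Lambda^{0} : w\Lambda C\neq\emptyset\}$ one checks that both $\overline{C}$ and $\overline{C}\setminus C$ are closed under prolonging such paths forward. Hence, with the ordering $C$, then $\overline{C}\setminus C$, then $\Lambda^{0}\setminus\overline{C}$, the matrix $A_{F}$ leaves invariant the subspace of vectors supported on $\overline{C}$, the block of $A_{F}$ with rows in $\overline{C}\setminus C$ and columns in $C$ (call it $B$) is the only possibly non-zero off-diagonal block inside $\overline{C}$, and $A_{F}^{C}$ is strictly positive (a non-trivial component together with the well-chosen hypothesis yields a non-zero degree loop at every vertex of $C$ and a non-zero degree path between any two vertices of $C$). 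In particular the Perron--Frobenius theorem applies to $A_{F}^{C}$: its $\rho(A_{F}^{C})$-eigenspace is one-dimensional and spanned by a strictly positive vector $x$.

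For existence, set $y=x$ on $C$. The rows of $A_{F}$ indexed by $C$ only see the $C$-columns, so $A_{F}y=\rho(A_{F}^{C})y$ on those rows. If $\overline{C}\setminus C=\emptyset$ we are done after normalising. Otherwise I need $z\in[0,\infty[^{\overline{C}\setminus C}$ with $(\rho(A_{F}^{C})I-A_{F}^{\overline{C}\setminus C})z=By$. This is where the $F$-harmonicity of $C$ enters: since $\rho(A_{F}^{C})>\rho(A_{F}^{\overline{C}\setminus C})$, the matrix $\rho(A_{F}^{C})I-A_{F}^{\overline{C}\setminus C}$ is invertible and its inverse $\sum_{m\geq 0}\rho(A_{F}^{C})^{-m-1}(A_{F}^{\overline{C}\setminus C})^{m}$ has non-negative entries, so $z:=(\rho(A_{F}^{C})I-A_{F}^{\overline{C}\setminus C})^{-1}By$ is well defined and $\geq 0$ because $B,y\geq 0$. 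Extending $(y,z)$ by zero on $\Lambda^{0}\setminus\overline{C}$ and using the invariance of the subspace supported on $\overline{C}$ gives a non-negative eigenvector for $\rho(A_{F}^{C})$ supported on $\overline{C}$; normalising to unit $1$-norm produces a candidate $x_{F}^{C}$ satisfying $(1)$ and $(2)$, with $x_{F}^{C}|_{C}$ a positive multiple of $x$ (the multiple being positive since otherwise the whole vector would vanish, contradicting unit norm).

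For strict positivity on $\overline{C}$, note $x>0$ handles $C$, and for each $v\in\overline{C}\setminus C$ there is a path from some $c\in C$ to $v$, necessarily of non-zero degree since $v\neq c$, so $B(v,c)>0$ by the well-chosen property; hence $By>0$ on all of $\overline{C}\setminus C$ and therefore $z\geq\rho(A_{F}^{C})^{-1}By>0$. For uniqueness, if $x'$ is any vector as in the statement, then restricting the eigenvalue equation to the $C$-rows shows $x'|_{C}$ lies in the one-dimensional $\rho(A_{F}^{C})$-eigenspace of $A_{F}^{C}$, hence $x'|_{C}=c'x$ for some scalar $c'\geq 0$; restricting to the $\overline{C}\setminus C$-rows and using the invertibility above forces $x'|_{\overline{C}\setminus C}=c'z$; and $x'$ vanishes off $\overline{C}$ by hypothesis, so $x'=c'\,x_{F}^{C}$, whence $c'=1$ by the normalisation. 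The main obstacle is the first paragraph: getting the block-triangular picture right requires being careful with the direction conventions (which sets are ``forward-closed''), and one must extract from the well-chosen hypothesis both the strict positivity of $A_{F}^{C}$ and the fact that every row of $B$ has a positive entry; once that is in place, the remainder is Perron--Frobenius theory and Neumann-series bookkeeping.
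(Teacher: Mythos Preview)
Your proposal is correct and follows essentially the same route as the paper: both arguments use the block structure coming from $A_{F}^{C,\overline{C}\setminus C}=0$ and $A_{F}^{\Lambda^{0}\setminus\overline{C},\overline{C}}=0$, take the Perron--Frobenius eigenvector $x$ of the strictly positive matrix $A_{F}^{C}$, solve for the $\overline{C}\setminus C$-part via the Neumann series for $(\rho(A_{F}^{C})I-A_{F}^{\overline{C}\setminus C})^{-1}$ (which converges by the $F$-harmonic hypothesis), and deduce uniqueness from the one-dimensionality of the Perron eigenspace together with that same invertibility. The only cosmetic difference is that the paper phrases uniqueness by showing $y-\lambda x_{F}^{C}$ would be a $\rho(A_{F}^{C})$-eigenvector of $A_{F}^{\overline{C}\setminus C}$, whereas you invoke the inverse directly; these are equivalent.
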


\begin{proof}
$A_{F}^{C}$ is strictly positive, so there exists a unique vector $x \in ]0,\infty[^{C}$ with unit $1$-norm such that $A_{F}^{C} x = \rho(A_{F}^{C}) x$. If $\overline{C}\setminus C \neq \emptyset$ it follows by choice of $C$ that the matrix $\rho(A_{F}^{C})^{-1} A_{F}^{\overline{C} \setminus C} $ has spectral radius strictly less than $1$. So:
$$
\left( 1^{\overline{C} \setminus C} - \rho(A_{F}^{C})^{-1}A_{F}^{\overline{C} \setminus C}  \right)^{-1} = \sum_{n=0}^{\infty}\left( \rho(A_{F}^{C})^{-1}A_{F}^{\overline{C} \setminus C}  \right)^{n} .
$$
We define a vector $x^{C} \in [0, \infty[^{\overline{C}}$ as follows; If $\overline{C}\setminus C=\emptyset$ we set $x^{C}=x$ and if $\overline{C}\setminus C \neq \emptyset$ we set:
$$
x^{C}|_{\overline{C}\setminus C} = \left( 1^{\overline{C} \setminus C} - \rho(A_{F}^{C})^{-1}A_{F}^{\overline{C} \setminus C}  \right)^{-1} (\rho(A_{F}^{C})^{-1} A_{F}^{\overline{C} \setminus C, C}) x \quad , \quad x^{C}|_{C}=x .
$$
By definition of $F$, $A_{F}^{\overline{C} \setminus C, C}$ is a matrix with strictly positive entries, so $x^{C}_{v} >0$ for all $v \in \overline{C}$. If $v \in C$ and $w \in \overline{C} \setminus C$ then $v \Lambda w =\emptyset$. So $A_{F}^{C, \overline{C} \setminus C}=0$, and this implies that $\left(A_{F}^{\overline{C}} x^{C}\right)|_{C}=A_{F}^{C}x=\rho(A_{F}^{C})x^{C}|_{C}$ and:
\begin{align*}
 &\left(A_{F}^{\overline{C}}x^{C}\right)|_{\overline{C}\setminus C}= A_{F}^{\overline{C}\setminus C} \sum_{n=0}^{\infty}\left( \rho(A_{F}^{C})^{-1} A_{F}^{\overline{C} \setminus C}  \right)^{n}(\rho(A_{F}^{C})^{-1} A_{F}^{\overline{C} \setminus C, C}) x+ A_{F}^{\overline{C}\setminus C, C}x \\
&= \rho(A_{F}^{C}) \sum_{n=0}^{\infty}\left( \rho(A_{F}^{C})^{-1}A_{F}^{\overline{C} \setminus C} \right)^{n+1}(\rho(A_{F}^{C})^{-1}A_{F}^{\overline{C} \setminus C, C}) x+ \rho(A_{F}^{C}) \left(\rho(A_{F}^{C})^{-1}A_{F}^{\overline{C}\setminus C, C} \right)x  \\
&=\rho(A_{F}^{C}) x^{C}|_{\overline{C}\setminus C}
\end{align*}
Define a vector $x_{F}^{C}\in [0, \infty[^{\Lambda^{0}}$ by setting $(x_{F}^{C})_{v}=0$ when $v \notin \overline{C}$ and $x_{F}^{C}|_{\overline{C}}$ to be the normalisation of $x^{C}$. Since $A_{F}^{\Lambda^{0} \setminus \overline{C}, \overline{C}}=0$ it then follows that:
$$
A_{F}x_{F}^{C} =\rho(A_{F}^{C}) x_{F}^{C}
$$
which proves existence.

To prove uniqueness, assume $y \in [0 , \infty[^{\Lambda^{0}}$ is of unit $1$-norm and satisfies $(1)$ and $(2)$. Then by $(2)$ and since $A_{F}^{C ,\overline{C} \setminus C}=0$ we have:
$$
\left(A_{F} y \right)|_{\overline{C}} =  A_{F}^{\overline{C}} (y|_{\overline{C}}) \qquad \left(A_{F}^{\overline{C}} (y|_{\overline{C}}) \right)|_{C} = A_{F}^{C}(y|_{C}) .
$$
Combined this implies that $\rho(A_{F}^{C})(y|_{C})=(A_{F} y)|_{C} = A_{F}^{C} (y|_{C})$, and hence $y|_{C}=\lambda x_{F}^{C}|_{C}$ for some $\lambda \in [0, \infty[$. If $\overline{C} \setminus C= \emptyset$ it follows that $y=x_{F}^{C}$ since they both have unit $1$-norm, so assume $\overline{C} \setminus C \neq \emptyset$. Now $y-\lambda x_{F}^{C}$ is a vector supported on $\overline{C} \setminus C$ satisfying:
$$
A_{F} (y-\lambda x_{F}^{C})= \rho(A_{F}^{C}) (y-\lambda x_{F}^{C})
$$ 
but:
$$
\rho(A_{F}^{C}) (y-\lambda x_{F}^{C})|_{\overline{C} \setminus C}=\left(A_{F} (y-\lambda x_{F}^{C}) \right)|_{\overline{C} \setminus C}  = A_{F}^{\overline{C} \setminus C} (y-\lambda x_{F}^{C})|_{\overline{C} \setminus C}
$$
since $\rho(A_{F}^{\overline{C} \setminus C})<\rho(A_{F}^{C})$ this implies that $y=\lambda x_{F}^{C}$, and since they both have unit $1$-norm we must have that $y=x_{F}^{C}$.  
\end{proof}

\begin{lemma} \label{l67}
Let $F$ be a well chosen set for a finite $k$-graph $\Lambda$ without sources, and let $C$ be a $F$-harmonic component. The vector $x_{F}^{C}$ from Lemma \ref{lC} satisfies:
$$
A_{i}x_{F}^{C} = \rho(A_{i}^{C})x_{F}^{C}
$$
for $i=1, \dots ,k$.
\end{lemma}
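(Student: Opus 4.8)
The plan is to exploit that the vertex matrices $A_{1},\dots,A_{k}$ commute, so that each $A_{i}$ commutes with $A_{F}$; the argument then runs essentially as in the strongly connected case (compare Proposition 3.1 in \cite{aHLRS}), combined with the uniqueness already established in Lemma \ref{lC}. Write $x:=x_{F}^{C}$ and $\rho:=\rho(A_{F}^{C})$ and fix $i\in\{1,\dots,k\}$. From $A_{F}x=\rho x$ and $A_{i}A_{F}=A_{F}A_{i}$ one obtains at once $A_{F}(A_{i}x)=\rho(A_{i}x)$, so $A_{i}x$ again lies in the $\rho$-eigenspace of $A_{F}$; and since $A_{i}$ and $x$ have non-negative entries, $A_{i}x\geq 0$.

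Next I would check that $A_{i}x$ is supported on $\overline{C}$. If $(A_{i}x)_{v}\neq 0$, pick $w$ with $A_{i}(v,w)>0$ and $x_{w}\neq 0$; by part $(2)$ of Lemma \ref{lC} we have $w\in\overline{C}$, and $A_{i}(v,w)>0$ means $v\Lambda^{e_{i}}w\neq\emptyset$, hence $v\leq w$. Since $w\in\overline{C}$ gives $w\leq v'$ for some $v'\in C$, transitivity yields $v\leq v'$, i.e. $v\in\overline{C}$. Thus $A_{i}x$ is a non-negative vector supported on $\overline{C}$ lying in the $\rho$-eigenspace of $A_{F}$. But the only such vectors are the non-negative scalar multiples of $x$: normalise and apply the uniqueness clause of Lemma \ref{lC}, the zero vector being counted as $0\cdot x$. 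Hence $A_{i}x=t_{i}\,x$ for some $t_{i}\geq 0$.

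It remains to identify $t_{i}$ with $\rho(A_{i}^{C})$. Restricting to $C$, note that coloured edges can only decrease the component order, so $A_{i}^{C,\overline{C}\setminus C}=0$: if $v\in C$, $w\in\overline{C}\setminus C$ and $A_{i}(v,w)>0$ then $v\leq w$ while $w\leq v'$ for some $v'\in C$, which forces $w$ into $C$, a contradiction. Therefore $(A_{i}x)|_{C}=A_{i}^{C}(x|_{C})$, so $A_{i}^{C}(x|_{C})=t_{i}\,(x|_{C})$, and $x|_{C}>0$ by Lemma \ref{lC}. A non-negative matrix that possesses a strictly positive eigenvector has that eigenvalue equal to its spectral radius (pair the eigenvector with a non-negative left Perron eigenvector of $A_{i}^{C}$; the degenerate case $t_{i}=0$ forces $A_{i}^{C}=0$ and hence $\rho(A_{i}^{C})=0$ as well). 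Thus $t_{i}=\rho(A_{i}^{C})$, which yields $A_{i}x_{F}^{C}=\rho(A_{i}^{C})x_{F}^{C}$ for every $i$.

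I expect the only genuine care to be the order-theoretic bookkeeping in the last two paragraphs: one must keep straight the direction of $\leq$ on $\Lambda^{0}$ and of the induced order on components, so as to see simultaneously that $\overline{C}$ absorbs any vertex reached from it by a single coloured edge and that no coloured edge runs from $C$ into $\overline{C}\setminus C$. Everything else is a formal consequence of commutativity together with the uniqueness in Lemma \ref{lC}.
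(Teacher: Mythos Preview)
Your proof is correct and follows essentially the same route as the paper: use commutativity of $A_{i}$ with $A_{F}$ to see that $A_{i}x_{F}^{C}$ is again a non-negative $\rho(A_{F}^{C})$-eigenvector supported on $\overline{C}$, invoke the uniqueness in Lemma~\ref{lC} to get $A_{i}x_{F}^{C}=t_{i}x_{F}^{C}$, and then restrict to $C$ to identify $t_{i}$. The only cosmetic differences are that you spell out the order-theoretic reasons for $A_{i}^{\Lambda^{0}\setminus\overline{C},\overline{C}}=0$ and $A_{i}^{C,\overline{C}\setminus C}=0$ where the paper leaves them implicit, and you give a direct left-Perron-vector argument for $t_{i}=\rho(A_{i}^{C})$ where the paper simply cites Lemma~3.2 of \cite{aHLRS}.
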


\begin{proof}
Since $A_{i}^{\Lambda^{0}\setminus \overline{C}, \overline{C}}=0$ and $x_{F}^{C}|_{\Lambda^{0} \setminus \overline{C}}=0$ we have that:
$$
\left(A_{i}x_{F}^{C} \right)|_{\Lambda^{0} \setminus \overline{C}} = 0 .
$$
Now it follows that:
$$
A_{F} (A_{i} x_{F}^{C}) = A_{i}(A_{F}x_{F}^{C}) = \rho(A_{F}^{C})(A_{i}x_{F}^{C}) .
$$
Since $A_{i} x_{F}^{C} \in [0, \infty[^{\Lambda^{0}}$ Lemma \ref{lC} implies that $A_{i}x_{F}^{C}=\lambda_{i} x_{F}^{C}$ for some $\lambda_{i} \in [0, \infty[$ for each $i$, and hence:
\begin{equation*}
\lambda_{i} x_{F}^{C}|_{C} = \left (A_{i}x_{F}^{C} \right)|_{C} = A_{i}^{C} (x_{F}^{C}|_{C}) .
\end{equation*}
Since $x_{F}^{C}|_{C}$ is strictly positive we can conclude from Lemma 3.2 in \cite{aHLRS} that $\lambda_{i} = \rho(A_{i}^{C})$. By definition of $\lambda_{i}$ this proves the Lemma.
\end{proof}

Combining Lemma \ref{l67} and Lemma \ref{lC} it follows that a positive $F$-harmonic component $C$ gives rise to a $\beta$-harmonic vector for $\alpha^{r}$,  $x_{F}^{C}$, when $r$ is defined by:
$$
r: =\frac{1}{\beta} \left( \ln(\rho(A_{1}^{C})), \ln(\rho(A_{2}^{C})), \dots , \ln(\rho(A_{k}^{C})) \right) .
$$
We will now prove that all $\beta$-harmonic vectors for $\alpha^{r}$ can be decomposed as convex combinations of such vectors. To do this we will need the following technical Lemma. Notice that it deals with graphs that might have sources, which will prove important in its utilisation.

\begin{lemma}\label{lcomponent}
Let $\Lambda$ be a finite $k$-graph for some $k \in \mathbb{N}$, and let $B\in M_{\Lambda^{0}}([0, \infty[)$ be a matrix satisfying that for all $v,w \in \Lambda^{0}$ then $B(v,w)>0$ if and only if there exists a $n \in \mathbb{N}^{k}\setminus \{0\}$ with $v\Lambda^{n} w \neq \emptyset$. Then:
$$
\rho(B)=\max_{C \in \Lambda^{0}/\sim} \rho(B^{C}) .
$$
\end{lemma}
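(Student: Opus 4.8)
The plan is to show that, after a suitable reordering of $\Lambda^{0}$, the matrix $B$ is block upper triangular with diagonal blocks precisely the matrices $B^{C}$, $C\in\Lambda^{0}/\sim$; the lemma then drops out of the elementary fact that the characteristic polynomial of a block triangular matrix is the product of the characteristic polynomials of its diagonal blocks. In essence this is the classical statement that the spectral radius of a nonnegative matrix is the largest spectral radius occurring among the irreducible blocks of its Frobenius normal form, specialised to the present combinatorial setting.

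In detail I would proceed in three steps. First, recall from the discussion preceding the lemma that $\leq$ descends to a partial order on the finite set $\Lambda^{0}/\sim$ of components, so we may list the components as $C_{1},C_{2},\dots,C_{p}$ with $C_{i}\leq C_{j}\Rightarrow i\leq j$, and then order the vertices of $\Lambda^{0}$ so that those in $C_{1}$ come first, then those in $C_{2}$, and so on. Second, observe that $B(v,w)>0$ forces $v\leq w$: by hypothesis there is $n\in\mathbb{N}^{k}\setminus\{0\}$ with $v\Lambda^{n}w\neq\emptyset$, and since $v\Lambda^{n}w\subseteq v\Lambda w$ this yields $v\Lambda w\neq\emptyset$, i.e. $v\leq w$. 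Hence if $v\in C_{i}$, $w\in C_{j}$ and $B(v,w)>0$ then $C_{i}\leq C_{j}$, so $i\leq j$; thus in the chosen ordering $B$ is block upper triangular with diagonal blocks $B^{C_{1}},\dots,B^{C_{p}}$. Third, $\det(\lambda I-B)=\prod_{i=1}^{p}\det(\lambda I-B^{C_{i}})$, so the spectrum of $B$ is the union of the spectra of the $B^{C_{i}}$, and therefore $\rho(B)=\max_{i}\rho(B^{C_{i}})=\max_{C\in\Lambda^{0}/\sim}\rho(B^{C})$.

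I do not expect a genuine obstacle here; the argument is bookkeeping plus one standard linear algebra fact. The two points deserving a little care are, first, the distinction between arbitrary paths and paths of nonzero degree — only the implication $B(v,w)>0\Rightarrow v\leq w$ is used, while the converse direction in the hypothesis on $B$ and the diagonal entries $B(v,v)$ are irrelevant to the proof — and second, the fact that the lemma is phrased for $k$-graphs which may have sources; this causes no trouble precisely because the proof uses only the combinatorics of $\leq$ on $\Lambda^{0}$ and never the infinite path space or any analytic input. An alternative is to induct on the number of components, at each step splitting off a component minimal for $\leq$ and noting that the columns of $B$ indexed by it are supported on that component, but this reduces to the same $2\times 2$ block triangular computation, so I would present the direct argument above.
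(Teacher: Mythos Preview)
Your argument is correct and follows essentially the same strategy as the paper: put $B$ into block upper triangular form with diagonal blocks $B^{C}$ and invoke the factorisation of the characteristic polynomial. The only difference is presentational: the paper produces the ordering by first passing to an auxiliary directed $1$-graph $|\Lambda|$ with edge set $\Lambda^{e_{1}}\cup\cdots\cup\Lambda^{e_{k}}$, checking via the factorisation property that its reachability relation agrees with $\leq$ on $\Lambda^{0}$, and then citing the vertex ordering from \cite{aHLRS1}, whereas you go straight to a linear extension of the partial order on $\Lambda^{0}/\sim$ already recorded in Section~\ref{s61}; your route is slightly more self-contained but otherwise identical in substance.
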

\begin{proof}
We will prove this by arranging the vertices of $\Lambda^{0}$ such that $B$ appears in a block upper triangular form with the block matrices consisting of the matrices $B^{C}$ with $C \in \Lambda^{0}/\sim$. This will prove the assertion in the Lemma, since the determinant of a block upper triangular matrix is the product of the determinants of the blocks.

To do this we define a directed $1$-graph $|\Lambda|=(V,E,r,s)$ by setting $V=\Lambda^{0}$, $E = \Lambda^{e_{1}} \cup \cdots \cup \Lambda^{e_{k}}$ and letting $s$ and $r$ be the restriction of the source and range map on $\Lambda$. Let $E^{*}$ denote the finite paths in $|\Lambda|$. Then using the factorisation property of $d$ it follows that for all $v,w \in V$ we have $vE^{*}w\neq \emptyset$ if and only if $v\Lambda w \neq \emptyset$. So defining a relation on $V$ by $v\leq w$ if $vE^{*}w \neq \emptyset$ we get exactly the same relation on $V=\Lambda^{0}$ as defined in Section \ref{s61}. Now we order the vertex set for the directed graph $|\Lambda |$ as it was done in Section 2.3 of \cite{aHLRS1}, giving us a numbering of the vertices $V= \{v_{1}, v_{2}, \dots, v_{|V|}\}$ satisfying that if $v_{i} \leq v_{j}$ then either $i\leq j$ or $v_{i} \sim v_{j}$, and that vertices in the same component are grouped together. For this order on $\Lambda^{0}$ it follows that $B$ has the desired form.
\end{proof}

The following proposition and its proof is inspired by Lemma 3.5 in \cite{CT}.

\begin{prop} \label{pdecomp}
Let $\Lambda$ be a finite $k$-graph without sources and let $\psi \in [0, \infty[^{\Lambda^{0}}$ be a $\beta$-harmonic vector for $\alpha^{r}$ for some $r \in \mathbb{R}^{k}$ and $\beta \in \mathbb{R}$. Let $F$ be well chosen. Then there exists a unique collection $\mathcal{C}$ of $F$-harmonic components and positive numbers $t_{C} \in ]0, 1]$ , $C \in \mathcal{C}$, such that:
$$
\psi = \sum_{C \in \mathcal{C}} t_{C} x_{F}^{C} .
$$
Furthermore each $C \in \mathcal{C}$ is positive with $C \nleq C'$ for $C' \in \mathcal{C}\setminus \{C\}$, and each $C \in \mathcal{C}$ satisfies:
$$
\beta r =(\ln(\rho(A_{1}^{C})), \dots, \ln(\rho(A_{k}^{C})))
$$
and that $x_{F}^{C}$ is a $\beta$-harmonic vector for $\alpha^{r}$.
\end{prop}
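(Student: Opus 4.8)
The plan is to view $\psi$ as a nonnegative eigenvector of the single matrix $A_F$ and run a Perron--Frobenius decomposition, the extremal pieces being the vectors $x_F^C$ attached to the maximal components meeting the support of $\psi$. First I would record that, since $A^n=\prod_i A_i^{n_i}$ and the $A_i$ commute, a $\beta$-harmonic vector $\psi$ satisfies $A^n\psi=e^{\beta r\cdot n}\psi$ for all $n\in\mathbb{N}^k$, hence $A_F\psi=\lambda\psi$ with $\lambda:=\sum_{n\in F}e^{\beta r\cdot n}>0$. Put $S:=\{v\in\Lambda^0 : \psi_v>0\}$ and write $[v]$ for the component of $v\in\Lambda^0$. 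Since $A_F^C$ is strictly positive for every nontrivial component $C$, the estimate $\lambda\psi_v=(A_F\psi)_v\ge(A_F^C\psi|_C)_v$ shows $\psi|_C$ is either $0$ or strictly positive on $C$, so $S$ is a union of components; and from $A^l(u,c)>0$ with $\psi_c>0$ one gets $\psi_u=e^{-\beta r\cdot l}(A^l\psi)_u>0$, so $\overline{C}\subseteq S$ whenever a component $C$ lies in $S$. Letting $\mathcal{C}$ be the finite set of $\le$-maximal components contained in $S$, every $v\in S$ has $[v]\subseteq S$ and hence lies below a member of $\mathcal{C}$, so $S=\bigcup_{C\in\mathcal{C}}\overline{C}$; being maximal, the members of $\mathcal{C}$ are pairwise incomparable, which already gives $C\nleq C'$ for distinct $C,C'\in\mathcal{C}$.

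Next I would analyse a fixed $C\in\mathcal{C}$. No vertex of $S$ lies strictly above $C$, so for $v\in C$ the only $w$ contributing to $\lambda\psi_v=\sum_w A_F(v,w)\psi_w$ lie in $C$; thus $A_F^C(\psi|_C)=\lambda\,\psi|_C$ with $\psi|_C\neq0$, which is impossible if $C$ is trivial and otherwise forces $\psi|_C>0$ and $\lambda=\rho(A_F^C)$. The same computation with $A_i$ gives $A_i^C(\psi|_C)=e^{\beta r_i}\psi|_C$, and by Lemma 3.2 in \cite{aHLRS} (as in the proof of Lemma \ref{l67}) this yields $e^{\beta r_i}=\rho(A_i^C)$, so $C$ is positive and $\beta r=(\ln\rho(A_1^C),\dots,\ln\rho(A_k^C))$. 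For $F$-harmonicity, assume $\overline{C}\setminus C\neq\emptyset$: this set is a union of components on which the path structure of $\Lambda$ is self-contained (any $\Lambda$-path between two of its vertices stays inside it), so $A_F^{\overline{C}\setminus C}$ is precisely the $A_F$-matrix of the sub-$k$-graph $\Lambda_{\overline{C}\setminus C}$ and Lemma \ref{lcomponent} reduces the task to showing $\rho(A_F^D)<\lambda$ for each nontrivial component $D\subseteq\overline{C}\setminus C$. For such $D$ one has $D<C$, so a $\Lambda$-path of nonzero degree runs from some $c\in C$ to some $d\in D$, giving $A_F(d,c)>0$ with $\psi_c>0$; hence $A_F^D(\psi|_D)\le\lambda\,\psi|_D$ with strict inequality in the $d$-coordinate, and since $A_F^D$ is irreducible with $\psi|_D>0$ a standard strict-subinvariance argument (pairing with the positive left Perron eigenvector) gives $\rho(A_F^D)<\lambda$. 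Then Lemma \ref{lC} provides $x_F^C$, and Lemma \ref{l67} gives $A_i x_F^C=\rho(A_i^C)x_F^C=e^{\beta r_i}x_F^C$, so $x_F^C$ is a $\beta$-harmonic vector for $\alpha^r$.

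To obtain the decomposition, note that $\psi|_C$ and $x_F^C|_C$ are both positive eigenvectors of the irreducible matrix $A_F^C$ for its spectral radius $\lambda$, hence proportional: $\psi|_C=t_C\,x_F^C|_C$ for a unique $t_C>0$. I would then show $\psi':=\psi-\sum_{C\in\mathcal{C}}t_C x_F^C$ is zero. Indeed $A_F\psi'=\lambda\psi'$ (each $x_F^C$ has $A_F$-eigenvalue $\rho(A_F^C)=\lambda$), $\operatorname{supp}(\psi')\subseteq S$, and $\psi'$ vanishes on each $C\in\mathcal{C}$ because the cross terms vanish by incomparability. If $\psi'\neq0$, pick $v\in\operatorname{supp}(\psi')$ that is $\le$-maximal; then $v\in\overline{C_0}\setminus C_0$ for some $C_0\in\mathcal{C}$, maximality gives $(A_F\psi')|_{[v]}=A_F^{[v]}(\psi'|_{[v]})$, so $A_F^{[v]}(\psi'|_{[v]})=\lambda\,\psi'|_{[v]}$ with $\psi'|_{[v]}\neq0$ (ruling out $[v]$ trivial), whence $\rho(A_F^{[v]})\ge\lambda$; but $[v]$ is a nontrivial component inside $\overline{C_0}\setminus C_0$, so $\rho(A_F^{[v]})<\lambda$ by the previous step — a contradiction. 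Hence $\psi=\sum_{C\in\mathcal{C}}t_C x_F^C$, and comparing $1$-norms gives $\sum_{C\in\mathcal{C}}t_C=1$, so each $t_C\in\,]0,1]$.

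Finally, for uniqueness, suppose $\psi=\sum_{C\in\mathcal{C}'}t'_C x_F^C$ with $\mathcal{C}'$ a collection of $F$-harmonic components and all $t'_C>0$. Evaluating $A_F\psi=\lambda\psi$ at a vertex of a $\le$-maximal member of $\{C\in\mathcal{C}':\rho(A_F^C)\neq\lambda\}$ shows this set is empty, so $\rho(A_F^C)=\lambda$ for every $C\in\mathcal{C}'$; also $S=\operatorname{supp}(\psi)=\bigcup_{C\in\mathcal{C}'}\overline{C}$. Evaluating at a vertex of a maximal component $C_0$ of $S$ forces $C_0\in\mathcal{C}'$ with $t'_{C_0}=t_{C_0}$, so $\mathcal{C}\subseteq\mathcal{C}'$; and if some $C_0\in\mathcal{C}'$ were not maximal in $S$ it would sit in $\overline{C_1}\setminus C_1$ for some $C_1\in\mathcal{C}\subseteq\mathcal{C}'$, giving $\lambda=\rho(A_F^{C_0})\le\rho(A_F^{\overline{C_1}\setminus C_1})<\rho(A_F^{C_1})=\lambda$, impossible — so $\mathcal{C}'=\mathcal{C}$ and $t'=t$. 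I expect the main obstacle to be the $F$-harmonicity step: identifying $A_F^{\overline{C}\setminus C}$ with the $A_F$-matrix of $\Lambda_{\overline{C}\setminus C}$ so that Lemma \ref{lcomponent} applies, and the strict Perron--Frobenius estimate converting ``$C$ feeds strictly into every component below it'' into the spectral gap $\rho(A_F^{\overline{C}\setminus C})<\rho(A_F^C)$; this gap is exactly what makes the remainder $\psi'$ vanish and what pins down $\mathcal{C}$ uniquely.
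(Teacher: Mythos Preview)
Your argument is correct, and it reaches the same conclusion as the paper by a genuinely different route. The paper normalises to $B=K^{-1}A_F$, bounds $\rho(B^{W})\le 1$ via Gelfand's formula, and then \emph{defines} $\mathcal{C}$ as the $\le$-\emph{minimal} components among those with $\rho(B^{C})=1$; harmonicity of each $C\in\mathcal{C}$ is then read off from this spectral description, and the remainder $\eta$ is killed by an argument that passes through the hereditary closure $H=\widehat{J}$ of $J=\bigcup_{C\in\mathcal{C}}C$. You instead take $\mathcal{C}$ to be the $\le$-\emph{maximal} components contained in the support $S$, obtain $A_F^{C}(\psi|_C)=\lambda\,\psi|_C$ directly from maximality, and deduce $F$-harmonicity of $C$ by a strict subinvariance estimate (pairing with the left Perron eigenvector) that gives $\rho(A_F^{D})<\lambda$ for every component $D<C$; the remainder $\psi'$ is then disposed of by choosing a component maximal in its support and contradicting that spectral gap. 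The two definitions of $\mathcal{C}$ agree a posteriori, but your path avoids Gelfand's formula and the hereditary-closure bookkeeping, and it lets you establish positivity of each $C$ and the relation $e^{\beta r_i}=\rho(A_i^{C})$ immediately (the paper derives these only at the end, from the uniqueness statement together with Lemma~\ref{l67}). The paper's approach, on the other hand, makes the spectral role of $\rho(B^{C})=1$ explicit from the outset, which some readers may find conceptually cleaner. One small point worth tightening in your write-up: when you ``pick $v\in\operatorname{supp}(\psi')$ that is $\le$-maximal'', you are really choosing a component $[v]$ maximal among those on which $\psi'$ does not vanish identically (the support of $\psi'$ need not itself be a union of components, since $\psi'$ can change sign); your subsequent computation uses exactly this component-level maximality, so it would be clearer to phrase it that way.
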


\begin{proof}
We will first prove that such a decomposition exists. Since $\psi$ is $\beta$-harmonic for $\alpha^{r}$ we know that $ A_{i}\psi = e^{\beta r_{i}} \psi $ for all $ i=1,2, \dots , k$. This implies that:
$$
A_{F}\psi = \sum_{n \in F}A^{n} \psi = \sum_{n \in F}e^{\beta r \cdot n} \psi .
$$
Let $K: = \sum_{n \in F}e^{\beta r \cdot n}$, then $K>0$ and we let 
$$
B:=K^{-1}A_{F} \in M_{\Lambda^{0}}([0,\infty[) .
$$
$B$ is then a non-negative matrix with the property that $B_{v,w}>0$ if and only if there is a non-trivial path from $w$ to $v$, and:
$$
B\psi = \psi .
$$
Set $W := \{v \in \Lambda^{0} \ | \ \psi_{v}>0  \}$. We claim $W$ is closed, i.e. $\overline{W}=W$. To see this, let $\lambda \in v \Lambda w$ for some $w \in W$ and $v \neq w$, then $B_{v,w}>0$ and hence:
$$
0 < B_{v,w}\psi_{w} \leq (B\psi)_{v} = \psi_{v} \Rightarrow v \in W .
$$
For any $v,w \in W$ we have $B^{n}_{v,w} \psi_{w} \leq (B^{n}\psi)_{v}\leq\psi_{v}$, so setting 
$$
L:= \max\{ \psi_{v}/\psi_{w} \ | \ v,w \in W\} >0
$$ 
we get that $B^{n}_{v,w} \leq L$ for all $n \in \mathbb{N}$ and $v,w \in W$. Using the properties of $B$ and $W$ we get that $(B^{W})^{n}_{v,w}=B^{n}_{v,w}$ for all $v,w \in W$, so $\lVert (B^{W})^{n} \rVert_{F} \leq |W| \cdot L$ for all $n\in \mathbb{N}$ where $\lVert \cdot \rVert_{F}$ denotes the Frobenius norm. By Gelfands Formula:
$$
\rho(B^{W})=\lim_{n \to \infty}\lVert (B^{W})^{n} \rVert_{F}^{1/n} \leq 1 .
$$
Since $B\psi =\psi$ we get that $\psi|_{W} = (B\psi)|_{W}=B^{W}(\psi|_{W})$, and hence $\rho(B^{W})=1$. Using Lemma \ref{lcomponent} on the graph $\Lambda_{W}$ we get:
$$
\rho(B^{W}) = \max_{C } \rho(B^{C})
$$
where $\max$ is taking over the components $C$ in $\Lambda_{W}$. Let $\mathcal{C}'$ be the collection of components $C$ in $W$ with $\rho(B^{C})=1$, and let $\mathcal{C}$ be the minimal elements of $\mathcal{C}'$ with respect to the order $\leq$. We now claim that $\mathcal{C}$ consists of $F$-harmonic components $C$ satisfying $\rho(A_{F}^{C})=K$. For $C \in \mathcal{C}$ we have:
$$
1=\rho(B^{C}) = \rho((K^{-1}A_{F})^{C}) = \rho(A_{F}^{C})/K \Rightarrow  \rho(A_{F}^{C})=K .
$$
Since $K>0$ this also implies that $C$ is non-trivial. Since $\overline{C} \subseteq W$ we have as before that $\rho(B^{\overline{C} \setminus C})\leq 1$ using Gelfand. If $\rho(B^{\overline{C} \setminus C})= 1$ there must be some component $D \subseteq \overline{C} \setminus C$ with $\rho(B^{D})=1$, but since this implies that $D \leq C$, $D \neq C$ and $D \in \mathcal{C}'$ this cannot be the case. So $\rho(B^{\overline{C} \setminus C})<1$ and hence:
$$
1 > \rho(B^{\overline{C} \setminus C}) = \rho((K^{-1}A_{F})^{\overline{C} \setminus C}) = \rho(A_{F}^{\overline{C} \setminus C})/K \Rightarrow \rho(A_{F}^{\overline{C} \setminus C}) <K=\rho(A_{F}^{C})
$$
proving that $C$ is in fact $F$-harmonic.

For any $D \in \mathcal{C}$ we have that $B^{D}(\psi|_{D}) \leq (B\psi)|_{D}=\psi|_{D}$, so:
$$
A_{F}^{D}(\psi|_{D}) \leq K\psi|_{D} = \rho(A_{F}^{D})\psi|_{D} .
$$
Since $\psi|_{D}$ is strictly positive the subinvariance theorem now imply that $A_{F}^{D}(\psi|_{D})  = \rho(A_{F}^{D})\psi|_{D}$, and hence $\psi|_{D}$ is a positive eigenvector for $A_{F}^{D}$ with eigenvalue $\rho(A_{F}^{D})$. However this is also the case for $x^{D}_{F}|_{D}$, so there is a positive number $t_{D}>0$ such that $\psi|_{D} = t_{D}x^{D}_{F}|_{D}$. Set:
$$
\eta = \psi - \sum_{D \in \mathcal{C}}t_{D} x^{D}_{F} .
$$
Since $A_{F} x^{D}_{F} = \rho(A_{F}^{D}) x^{D}_{F}=Kx_{F}^{D}$ we see that $B x^{D}_{F}=x^{D}_{F}$ and hence $B\eta=\eta$. The vector $\eta \in \mathbb{R}^{\Lambda^{0}}$ has $\eta_{v}=0$ for $v \notin W$. By definition $D, D' \in \mathcal{C}$ has $D \cap \overline{D'} \neq \emptyset$ if and only if $D=D'$, so we also have that $\eta_{v}=0$ for $v \in J:=\bigcup_{D\in \mathcal{C}} D$. Set $H$ to be the hereditary closure of $J$ in $\Lambda$, $H=\widehat{J}$, and consider $D \in \mathcal{C}$. If $v \in (H \setminus J) \cap \overline{D}$, then there is $D' \in \mathcal{C}$ such that $D' \Lambda v \neq \emptyset$ and $v \Lambda D \neq \emptyset$, and hence by composing paths $D' \Lambda D \neq \emptyset$. So $D' \leq D$. If $D' = D$ then $v \in D \subseteq J$, so $D \neq D'$, and since $\mathcal{C}$ consists of minimal elements we reach a contradiction. So $(H \setminus J) \cap \overline{D} = \emptyset$ and hence $\eta |_{H \setminus J} =\psi |_{H \setminus J}\geq 0$. For any $w \in H\setminus J$ we have a $v \in J$ such that $v \Lambda w \neq \emptyset$, i.e. $B_{v,w} \neq 0$, and hence:
$$
0=\eta_{v} = (B\eta)_{v} = \sum_{u \in \Lambda^{0}} B_{v,u} \eta_{u} = \sum_{u \in H \setminus J} B_{v,u} \eta_{u} \geq B_{v,w} \eta_{w} \geq 0 .
$$
So $\eta|_{H}=0$. Since $H$ contains all components $C$ in $W$ with $\rho(B^{C})=1$, we get that $\rho(B^{W \setminus H}) <1$. However for $v \in W \setminus H$ we have:
$$
(B^{W \setminus H} \eta|_{W\setminus H})_{v} = \sum_{w \in W \setminus H} B_{v,w} \eta_{w} = \sum_{w \in W} B_{v,w} \eta_{w} = \sum_{w \in \Lambda^{0}} B_{v,w} \eta_{w}=\eta_{v} .
$$
So $\eta|_{W\setminus H}=0$, and hence $\eta =0$, proving existence.

To prove uniqueness, assume that $\mathcal{D}$ is a collection of $F$-harmonic components and that there exists $s_{D}>0$ for all $D \in \mathcal{D}$ such that:
$$
\psi= \sum_{D \in \mathcal{D}} s_{D} x^{D}_{F} .
$$
So $W = \bigcup_{D \in \mathcal{D}} \overline{D} = \bigcup_{C \in \mathcal{C}} \overline{C}$ and hence for any $C \in \mathcal{C}$ there is a $D \in \mathcal{D}$ with $C \subseteq \overline{D}$. Assume $C \neq D$, then there is another $C' \in \mathcal{C}$ such that $D \subseteq \overline{C'}$, and hence $C \subseteq \overline{C'}$ with $C \neq C'$, contradicting the choice of $\mathcal{C}$. So $\mathcal{C} \subseteq \mathcal{D}$ and for $C \in \mathcal{C}$ the only $D \in \mathcal{D}$ with $\overline{D} \cap C \neq \emptyset$ is $D =C$, and hence we get $\psi|_{C}=s_{C}x_{F}^{C}|_{C}$. By choice of $\mathcal{C}$ we know $\overline{C'} \cap C = \emptyset$ for $C' \in \mathcal{C}\setminus\{C\}$ and hence we also have $\psi|_{C}=t_{C}x_{F}^{C}|_{C}$.  This implies that $t_{C}=s_{C}$ for $C \in \mathcal{C}$, and hence:
$$
0=\psi-\psi = \sum_{D \in \mathcal{D}} s_{D} x_{F}^{D} - \sum_{C \in \mathcal{C}} t_{C} x_{F}^{C} =\sum_{D \in \mathcal{D}\setminus \mathcal{C}} s_{D} x_{F}^{D}
$$
proving the uniqueness. Let $i \in \{1,2, \dots, k\}$. Lemma \ref{l67} implies that each $x^{C}_{F}$ satisfies:
$$
A_{i}x_{F}^{C}= \rho(A_{i}^{C}) x_{F}^{C} .
$$
By choice of $\psi$, $A_{i}\psi = e^{\beta r_{i}} \psi$, so multiplying with $A_{i}e^{-\beta r_{i}}$ gives:
$$
\psi=\sum_{C \in \mathcal{C}} t_{C} \rho(A_{i}^{C}) e^{-\beta r_{i}}   x_{F}^{C}
$$
we now use the uniqueness result to get that:
$$
\rho(A_{i}^{C}) e^{-\beta r_{i}}=1 \Rightarrow \rho(A_{i}^{C}) = e^{\beta r_{i}}
$$
for all $C \in \mathcal{C}$. This proves the last statements.
\end{proof}

\begin{cor}\label{cinde}
To be a positive $F$-harmonic component is independent of choice of well chosen $F$, and the vectors $x_{F}^{C}$ are independent of choice of $F$.
\end{cor}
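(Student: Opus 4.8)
The plan is to derive both halves of the statement at once from the decomposition in Proposition \ref{pdecomp}. Whether a component $C$ is \emph{positive} is an intrinsic property of $C$ that does not involve $F$ at all, so it suffices to fix a well chosen set $F$ and a positive $F$-harmonic component $C$, to take a second well chosen set $F'$, and to prove that $C$ is $F'$-harmonic with $x_{F'}^C = x_F^C$; the reverse implication then follows by interchanging $F$ and $F'$. Since $C$ is positive we may set $\beta = 1$ and $r := (\ln\rho(A_1^C), \dots, \ln\rho(A_k^C)) \in \mathbb{R}^{k}$. By Lemma \ref{lC} the vector $\psi := x_F^C$ lies in $[0,\infty[^{\Lambda^{0}}$, has unit $1$-norm and has support exactly $\overline{C}$, and by Lemma \ref{l67} it satisfies $A_i\psi = \rho(A_i^C)\psi = e^{r_i}\psi$ for every $i$; hence $\psi$ is a $1$-harmonic vector for $\alpha^r$, and Proposition \ref{pdecomp} applies to it with the well chosen set $F'$. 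This produces a unique collection $\mathcal{D}$ of $F'$-harmonic components together with numbers $t_D \in ]0,1]$ such that $\psi = \sum_{D\in\mathcal{D}} t_D x_{F'}^D$, with moreover $\rho(A_i^D) = e^{r_i} = \rho(A_i^C)$ for every $D\in\mathcal{D}$ and every $i$.

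Next I would show that $\mathcal{D} = \{C\}$. Since each $t_D$ is strictly positive and each $x_{F'}^D$ is nonnegative with support $\overline{D}$ (Lemma \ref{lC}), the function $\sum_{D\in\mathcal{D}} t_D x_{F'}^D$ has support $\bigcup_{D\in\mathcal{D}}\overline{D}$, so $\bigcup_{D\in\mathcal{D}}\overline{D} = \text{supp}(\psi) = \overline{C}$. In particular $D\subseteq\overline{C}$, and hence $D\le C$ in the partial order on components, for every $D\in\mathcal{D}$; on the other hand, any $c\in C\subseteq\bigcup_{D}\overline{D}$ lies in some $\overline{D_0}$ with $D_0\in\mathcal{D}$, which forces $C\le D_0$, so by antisymmetry $C = D_0\in\mathcal{D}$. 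Now suppose $D\in\mathcal{D}$ with $D\ne C$. Then $D$ and $C$ are distinct components with $D\subseteq\overline{C}$, so in fact $D\subseteq\overline{C}\setminus C$, and $\rho(A_i^D) = \rho(A_i^C)$ for all $i$. As harmonic components $C$ and $D$ are non-trivial, so $A_F^C$ and $A_F^D$ are strictly positive; the same argument as in the proof of Lemma \ref{l67} (via Lemma 3.2 of \cite{aHLRS}) shows that for any non-trivial component $E$ the Perron eigenvector of $A_F^E$ is a common eigenvector of the pairwise commuting matrices $A_i^E$ with $A_i^E$-eigenvalue $\rho(A_i^E)$, whence $\rho(A_F^E) = \sum_{n\in F}\prod_i\rho(A_i^E)^{n_i}$. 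It follows that $\rho(A_F^D) = \rho(A_F^C)$, and since $A_F^D$ is a principal submatrix of the nonnegative matrix $A_F^{\overline{C}\setminus C}$ we obtain $\rho(A_F^{\overline{C}\setminus C}) \ge \rho(A_F^D) = \rho(A_F^C)$ with $\overline{C}\setminus C\supseteq D\ne\emptyset$, contradicting that $C$ is $F$-harmonic. Hence $\mathcal{D} = \{C\}$, so $\psi = x_F^C = t_C x_{F'}^C$; comparing $1$-norms gives $t_C = 1$, i.e. $x_F^C = x_{F'}^C$, while $C\in\mathcal{D}$ means precisely that $C$ is $F'$-harmonic.

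I expect the identity $\mathcal{D} = \{C\}$ to be the crux, and inside it the formula $\rho(A_F^E) = \sum_{n\in F}\prod_i\rho(A_i^E)^{n_i}$ for non-trivial components, which is where the commutativity of the vertex matrices and Perron--Frobenius theory enter; the support bookkeeping and the partial-order computation showing $C\in\mathcal{D}$ are routine.
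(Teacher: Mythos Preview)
Your proof is correct and follows the same overall strategy as the paper: apply Proposition \ref{pdecomp} to the $1$-harmonic vector $x_F^C$ with respect to a second well chosen set, and show that the resulting decomposition collection $\mathcal{D}$ is exactly $\{C\}$. The support argument giving $\bigcup_{D\in\mathcal{D}}\overline{D}=\overline{C}$ and the deduction $C\in\mathcal{D}$ match the paper.

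The only difference is in how you eliminate a hypothetical $D\in\mathcal{D}$ with $D\neq C$. You invoke the equality $\rho(A_i^D)=\rho(A_i^C)$ from Proposition \ref{pdecomp}, establish the formula $\rho(A_F^E)=\sum_{n\in F}\prod_i\rho(A_i^E)^{n_i}$ for non-trivial components $E$ (this is exactly \eqref{eeq}, proved later in Lemma \ref{lhar}; your sketch is right, though it implicitly uses $(A^n)^E=\prod_i(A_i^E)^{n_i}$, which needs the observation that for matrices $B$ with $B(v,w)>0\Rightarrow v\Lambda w\neq\emptyset$ one has $(B_1B_2)^E=B_1^EB_2^E$ on a component $E$), and then contradict the $F$-harmonicity of $C$ via $\rho(A_F^D)=\rho(A_F^C)$. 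The paper instead uses the other conclusion of Proposition \ref{pdecomp}, namely that the components in $\mathcal{D}$ are pairwise $\leq$-incomparable: since $D\subseteq\overline{C}\setminus C$ gives $D\leq C$ with $D\neq C$ and both lie in $\mathcal{D}$, one has an immediate contradiction. That route is shorter and avoids anticipating \eqref{eeq}; your route is self-contained in a different way and has the side benefit of previewing the spectral-radius identity you will need anyway.
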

\begin{proof}
Assume that $C$ is a positive $F$-harmonic component for some well chosen $F$, then there is a vector $x_{F}^{C}$ such that $(x_{F}^{C})_{v}=0$ for $v \notin \overline{C}$ and that is $1$-harmonic for $\alpha^{r}$ where $r=(\ln(\rho(A_{1}^{C})), \dots, \ln(\rho(A_{k}^{C})))$. Let $\tilde{F}$ be another well chosen set, then by Proposition \ref{pdecomp} there is a collection $\mathcal{D}$ of $\tilde{F}$-harmonic components with:
$$
x_{F}^{C} = \sum_{D \in \mathcal{D}} t_{D} x_{\tilde{F}}^{D} .
$$
This implies that $\overline{C} = \bigcup_{D \in \mathcal{D}} \overline{D}$, so $C \in \mathcal{D}$ and hence $C$ is a $\tilde{F}$-harmonic component. If there were a $D' \in \mathcal{D}$ with $D' \neq C$, then $D' \subseteq\overline{C} \setminus C$ which is impossible by choice of $\mathcal{D}$. So $\mathcal{D}=\{C\}$ and since $x_{F}^{C}$ and $x_{\tilde{F}}^{C}$ have unit $1$-norm $x_{F}^{C}=x_{\tilde{F}}^{C}$.
\end{proof}

Corollary \ref{cinde} justifies that we drop the $F$ and simply call it a \emph{positive harmonic component}, and denote the vectors $x^{C}$. When we in the following write $r^{1} \lneq r^{2}$ for vectors $r^{1}, r^{2} \in \mathbb{R}^{k}$ we mean that $r^{1}_{i} \leq r^{2}_{i}$ for all $i$, but that $r^{1} \neq r^{2}$.

\begin{lemma} \label{lhar}
$C$ is a positive harmonic component if and only if it is positive and
\begin{equation}\label{elemma}
\left (\rho(A_{1}^{D}) ,  \rho(A_{2}^{D}) , \dots ,  \rho(A_{k}^{D}) \right ) \lneq \left (\rho(A_{1}^{C}) ,  \rho(A_{2}^{C}) , \dots ,  \rho(A_{k}^{C}) \right )
\end{equation}
for all components $D \subseteq \overline{C} \setminus C$. 
\end{lemma}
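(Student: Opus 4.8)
The plan is to fix one well chosen set $F$ and to translate everything into statements about the spectral radii $\rho(A_F^E)$ for components $E$, using the identity
$$
\rho(A_F^E)=\sum_{n\in F}\prod_{i=1}^{k}\rho(A_i^E)^{n_i}
$$
valid for every non-trivial component $E$. To see this, observe that for $v,w$ in a single component every non-zero term of a product $\sum A_{j_1}(v,u_1)A_{j_2}(u_1,u_2)\cdots$ has all of its intermediate vertices in that component, so $(A^n)^E=\prod_i(A_i^E)^{n_i}$ and in particular the matrices $A_i^E$ commute; since $F$ is well chosen and $E$ non-trivial, $A_F^E$ is strictly positive, so its unit-$1$-norm Perron--Frobenius eigenvector $x$ is a common eigenvector of the commuting matrices $A_i^E$, and Lemma 3.2 in \cite{aHLRS} forces $A_i^Ex=\rho(A_i^E)x$ for each $i$; applying $A_F^E=\sum_{n\in F}\prod_i(A_i^E)^{n_i}$ to $x$ then gives the identity. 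By Corollary \ref{cinde} whether $C$ is a positive harmonic component does not depend on $F$, and enlarging $F$ by a finite sequence in $\mathbb{N}^{k}\setminus\{0\}$ keeps it well chosen, so I may and will assume $e_1,\dots,e_k\in F$, i.e. $\bigcup_{n\in F}\text{supp}(n)=\{1,\dots,k\}$. Note that a positive component is automatically non-trivial.

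For the forward implication, suppose $C$ is a positive harmonic component; then $C$ is positive and $F$-harmonic, and if $\overline C\setminus C=\emptyset$ the stated condition is vacuous. So assume $\overline C\setminus C\neq\emptyset$ and let $D\subseteq\overline C\setminus C$ be a component. By Lemma \ref{lC} and Lemma \ref{l67} the vector $x^C$ satisfies $A_ix^C=\rho(A_i^C)x^C$, vanishes off $\overline C$, and is strictly positive on $\overline C\supseteq D$. Restricting the equation $A_ix^C=\rho(A_i^C)x^C$ to the rows indexed by $D$ and dropping the non-negative contributions of vertices outside $D$ gives $A_i^D\big(x^C|_D\big)\le\rho(A_i^C)\,x^C|_D$, and since $x^C|_D>0$ a standard subinvariance argument (as in the proof of Proposition \ref{pdecomp}) yields $\rho(A_i^D)\le\rho(A_i^C)$ for all $i$. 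If these were equalities for all $i$, then $\rho(A_1^D)=\rho(A_1^C)>0$, so $D$ is non-trivial and the identity above forces $\rho(A_F^D)=\rho(A_F^C)$; but $F$-harmonicity of $C$ gives $\rho(A_F^C)>\rho(A_F^{\overline C\setminus C})\ge\rho(A_F^D)$, a contradiction. Hence \eqref{elemma} holds.

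For the converse, assume $C$ is positive and \eqref{elemma} holds for every component $D\subseteq\overline C\setminus C$. If $\overline C\setminus C=\emptyset$ then $C$ is $F$-harmonic by definition. Otherwise, a short check with the order on components shows that every $\Lambda$-path between two vertices of $\overline C\setminus C$ stays inside $\overline C\setminus C$, so $A_F^{\overline C\setminus C}$, regarded as a matrix over the $k$-graph $\Lambda_{\overline C\setminus C}$, satisfies the hypothesis of Lemma \ref{lcomponent}; that lemma gives $\rho(A_F^{\overline C\setminus C})=\max_D\rho(A_F^D)$, the maximum being over the components $D$ of $\Lambda$ with $D\subseteq\overline C\setminus C$. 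Fix such a $D$. If $D$ is trivial then $\rho(A_F^D)=0<\rho(A_F^C)$ since $A_F^C$ is strictly positive. If $D$ is non-trivial, the identity together with \eqref{elemma} gives $\rho(A_F^D)=\sum_{n\in F}\prod_i\rho(A_i^D)^{n_i}\le\sum_{n\in F}\prod_i\rho(A_i^C)^{n_i}=\rho(A_F^C)$, and equality would force $\prod_i\rho(A_i^D)^{n_i}=\prod_i\rho(A_i^C)^{n_i}$ for every $n\in F$; since all $\rho(A_i^C)>0$, this would force $\rho(A_i^D)=\rho(A_i^C)$ for every $i\in\bigcup_{n\in F}\text{supp}(n)=\{1,\dots,k\}$, contradicting \eqref{elemma}. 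Thus $\rho(A_F^D)<\rho(A_F^C)$ for every such $D$, hence $\rho(A_F^{\overline C\setminus C})<\rho(A_F^C)$, i.e. $C$ is $F$-harmonic; being positive, $C$ is a positive harmonic component.

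The step I expect to be most delicate is the identity $\rho(A_F^E)=\sum_{n\in F}\prod_i\rho(A_i^E)^{n_i}$ together with its consequence that the common Perron--Frobenius eigenvector transports eigenvalues, and, in the converse direction, the realisation that a generic well chosen $F$ may omit some colour $i$ entirely, so that $\rho(A_F^D)<\rho(A_F^C)$ need not by itself reflect a single strict inequality $\rho(A_i^D)<\rho(A_i^C)$; this is what forces the passage (legitimate by Corollary \ref{cinde}) to a well chosen $F$ containing $e_1,\dots,e_k$.
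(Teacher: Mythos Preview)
Your proof is correct, and it differs from the paper's in two places worth noting. For the forward implication the paper argues by contradiction: assuming $\rho(A_j^D)>\rho(A_j^C)$ for some $j$, it builds auxiliary well chosen sequences $F_{l,s}$ and takes iterated limits to force $\rho(A_F^D)=0$. Your route via subinvariance is shorter and more conceptual: restricting the eigen-equation $A_i x^C=\rho(A_i^C)x^C$ to the rows of $D$ and discarding the non-negative off-$D$ contributions gives $A_i^D(x^C|_D)\le\rho(A_i^C)\,x^C|_D$ with $x^C|_D>0$, hence $\rho(A_i^D)\le\rho(A_i^C)$ immediately, and the equality case is ruled out by the shared identity $\rho(A_F^E)=\sum_{n\in F}\prod_i\rho(A_i^E)^{n_i}$ together with $F$-harmonicity. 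This exploits Lemma~\ref{l67}, which the paper has already proved but does not use here. For the converse, both arguments hinge on the same identity; the paper passes from $F$ to $\tilde F=F\cup F_1\cup\cdots\cup F_k$ to manufacture a strict inequality, whereas you absorb that step at the outset by enlarging $F$ to contain $e_1,\dots,e_k$, which is legitimate by Corollary~\ref{cinde}. The net effect is the same; your packaging is a bit cleaner. One cosmetic point: in your last step you really only need the particular vectors $n=e_j\in F$ to conclude $\rho(A_j^D)=\rho(A_j^C)$ for each $j$, so the phrase ``for every $i\in\bigcup_{n\in F}\operatorname{supp}(n)$'' is more general than the argument requires.
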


\begin{proof}
We will first argue that for all components $D$ and well chosen $F$ we have:
\begin{equation}\label{eeq}
\rho(A_{F}^{D}) = \sum_{n \in F} \prod_{i=1}^{k} \rho(A_{i}^{D})^{n_{i}} .
\end{equation}
If $D$ is trivial this is true since both sides equal $0$, so assume $D$ is non-trivial. Then $A_{F}^{D}$ is strictly positive and hence has a unimodular Perron-Frobenious eigenvector $z$, and since $A_{i}A_{F}=A_{F}A_{i}$ it follows that $A_{F}^{D}$ and $A_{i}^{D}$ commute, so $A_{F}^{D}A^{D}_{i}z =\rho(A_{F}^{D})A^{D}_{i}z$. Hence $A^{D}_{i}z=\lambda_{i}z$ with $\lambda_{i} \geq 0$ for all $i$. Lemma 3.2 in \cite{aHLRS} then implies that $\lambda_{i}=\rho(A^{D}_{i})$, and hence:
$$
\rho(A_{F}^{D})z = A_{F}^{D}z =\big(\sum_{n \in F} \prod_{i=1}^{k} A_{i}^{n_{i}} \big)^{D}z=\sum_{n \in F}  \big(\prod_{i=1}^{k} A_{i}^{n_{i}} \big)^{D}z .
$$
So if we can argue that $(\prod_{i} A_{i}^{n_{i}} )^{D}= \prod_{i} (A_{i}^{D})^{n_{i}}$ this proves \eqref{eeq}. This equality follows from a straightforward induction argument on $l=n_{1}+\cdots n_{k}$ and the fact that if $B_{1}, \dots ,B_{r}$ is a set of non-negative matrices over $\Lambda^{0}$ satisfying that $B_{i}(v,w)>0$ implies $v\Lambda w \neq \emptyset$, then $B_{1}B_{2}\cdots B_{r}$ has the same property.

Assume that $C$ is a positive harmonic component, then for any well chosen $F$:
$$
\rho(A_{F}^{\overline{C} \setminus C})=\max_{D \subseteq \overline{C} \setminus C} \rho(A_{F}^{D})
$$
so $\rho(A_{F}^{D}) < \rho(A_{F}^{C})$ for every component $D \subseteq \overline{C} \setminus C$ no matter the choice of well chosen $F$. Now fix a well chosen $F=\{a_{1}, \dots , a_{m}\}$. Assume for contradiction that there is a component $D \subseteq \overline{C} \setminus C$ and a $j$ with $\rho(A_{j}^{D}) > \rho(A_{j}^{C})$, and notice that this implies that $D$ is non-trivial. Define for $ s,l \in \mathbb{N}$, $F_{s} = \{a_{1}+se_{j} , \dots , a_{m}+se_{j}\}$ and:
$$
F_{l,s}: = F \cup \bigcup_{i=1}^{l} F_{s} .
$$
Then $F_{l,s}$ is well chosen, and hence using \eqref{eeq} we get that:
$$
\rho(A_{F}^{D})+\rho(A_{j}^{D})^{s}\rho(A_{F}^{D})\cdot l =\rho(A_{F_{l,s}}^{D})  \ < \  \rho(A_{F_{l,s}}^{C}) = \rho(A_{F}^{C})+\rho(A_{j}^{C})^{s}\rho(A_{F}^{C})\cdot l
$$
for all $l, s \in \mathbb{N}$.  This implies that:
$$
\rho(A_{F}^{D}) \leq \frac{1+\rho(A_{j}^{C})^{s} \cdot l}{1+\rho(A_{j}^{D})^{s} \cdot l} \rho(A_{F}^{C})
$$
for all $l,s$ and hence letting $l \to \infty$ we get that for all $s$:
$$
\rho(A_{F}^{D}) \leq \frac{\rho(A_{j}^{C})^{s}}{\rho(A_{j}^{D})^{s}} \rho(A_{F}^{C})
$$
and hence letting $s \to \infty$ and using $\rho(A_{j}^{D}) > \rho(A_{j}^{C})$ we get that $\rho(A_{F}^{D}) = 0$, in contradiction to the fact that $A_{F}^{D}$ is a strictly positive integer matrix. If there were a $D \subseteq \overline{C} \setminus C$ with $\rho(A_{i}^{D}) = \rho(A_{i}^{C})$ for each $i$ then \eqref{eeq} would imply that $\rho(A_{F}^{D}) = \rho(A_{F}^{C})$, also a contradiction.

Assume on the other hand that $C$ is positive and satisfies \eqref{elemma}, then \eqref{eeq} implies that $\rho(A_{F}^{D}) \leq \rho(A_{F}^{C})$ for any $D \subseteq \overline{C} \setminus C$ and well chosen $F$. Fix a well chosen $F=\{a_{1}, \dots , a_{m} \}$ and define $F_{i}=\{a_{1}+e_{i}, \dots, a_{m}+e_{i} \}$ for $i=1, \dots , k$ and $\tilde{F}: = F \cup F_{1} \cup \cdots \cup F_{k}$. By definition of $\tilde{F}$, any component $D \subseteq \overline{C} \setminus C$ satisfies:
$$
\rho(A_{\tilde{F}}^{D}) =\rho(A_{F}^{D}) +\sum_{i=1}^{k}\rho(A_{i}^{D}) \rho(A_{F}^{D}) <\rho(A_{F}^{C}) +\sum_{i=1}^{k}\rho(A_{i}^{C}) \rho(A_{F}^{C}) = \rho(A_{\tilde{F}}^{C}) .
$$
Since this is true for all $D \subseteq \overline{C} \setminus C$ we get that $\rho(A_{\tilde{F}}^{\overline{C}\setminus C}) < \rho(A_{\tilde{F}}^{C})$ and hence $C$ is a positive $\tilde{F}$-harmonic component.
\end{proof}

Fix some $r \in \mathbb{R}^{k}$. For each $\beta \in \mathbb{R}$ we set $\mathcal{C}_{r}(\beta)$ to be the set of positive harmonic components $C$ satisfying that $\beta r=(\ln(\rho(A_{1}^{C})), \dots ,\ln(\rho(A_{k}^{C}))) $.

\begin{thm}\label{t712}
Let $\Lambda$ be a finite $k$-graph without sources and let $r \in \mathbb{R}^{k}$ and $\beta \in \mathbb{R}$. There is an affine bijective correspondence between the gauge-invariant $\beta$-KMS states $\omega$ for $\alpha^{r}$ and the functions $f:\mathcal{C}_{r}(\beta) \to [0,1]$ with $\sum_{C \in \mathcal{C}_{r}(\beta)} f(C)=1$. A state $\omega$ corresponding to a function $f$ is given by:
$$
\omega (t_{\lambda} t_{\gamma}^{*}) = \delta_{\lambda, \gamma} e^{-\beta r\cdot d(\lambda)} \psi_{s(\lambda)}
$$
for all $\lambda , \gamma \in \Lambda$, where $\psi \in [0, \infty[^{\Lambda^{0}}$ is given by:
$$
\psi = \sum_{C \in \mathcal{C}_{r}(\beta)} f(C) x^{C}.
$$
\end{thm}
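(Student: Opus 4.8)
The plan is to reduce the statement to the two results already established: the affine bijection $\psi\mapsto\omega_\psi$ of Theorem~\ref{tvector} between $\beta$-harmonic vectors for $\alpha^r$ and gauge-invariant $\beta$-KMS states for $\alpha^r$, and the decomposition of Proposition~\ref{pdecomp}. It therefore suffices to show that
\[
f\longmapsto \psi_f:=\sum_{C\in\mathcal{C}_r(\beta)}f(C)\,x^C
\]
is an affine bijection from the set of functions $f:\mathcal{C}_r(\beta)\to[0,1]$ with $\sum_C f(C)=1$ onto the set of $\beta$-harmonic vectors for $\alpha^r$, and then to compose it with $\psi\mapsto\omega_\psi$. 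Affineness of $f\mapsto\psi_f$ is immediate from the definition.

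First I would check that $\psi_f$ really is a $\beta$-harmonic vector. By definition of $\mathcal{C}_r(\beta)$, each $C$ occurring in the sum is a positive harmonic component with $\beta r=(\ln\rho(A_1^C),\dots,\ln\rho(A_k^C))$, so Lemma~\ref{l67} gives $A_i x^C=\rho(A_i^C)x^C=e^{\beta r_i}x^C$ for all $i$; since $x^C\in[0,\infty[^{\Lambda^0}$ has unit $1$-norm by Lemma~\ref{lC}, the convex combination $\psi_f$ again lies in $[0,\infty[^{\Lambda^0}$, has unit $1$-norm, and satisfies $A_i\psi_f=e^{\beta r_i}\psi_f$, i.e.\ it is $\beta$-harmonic. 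For surjectivity, take an arbitrary $\beta$-harmonic vector $\psi$; Proposition~\ref{pdecomp} writes $\psi=\sum_{C\in\mathcal{C}}t_C x^C$ with $\mathcal{C}$ a finite set of positive harmonic components each satisfying $\beta r=(\ln\rho(A_1^C),\dots,\ln\rho(A_k^C))$, i.e.\ $\mathcal{C}\subseteq\mathcal{C}_r(\beta)$, and comparing $1$-norms of the nonnegative vectors on both sides forces $\sum_C t_C=1$, so extending $(t_C)$ by $0$ yields an $f$ with $\psi_f=\psi$.

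For injectivity, suppose $\psi_{f_1}=\psi_{f_2}$. Writing each side using only the components on which $f_i$ is strictly positive exhibits two decompositions of one and the same vector into positive harmonic components with coefficients in $]0,1]$; since positive harmonic components are precisely the positive $F$-harmonic components for every well-chosen $F$ by Corollary~\ref{cinde}, the uniqueness clause of Proposition~\ref{pdecomp} applies and forces the two collections and their coefficients to coincide, hence $f_1=f_2$. It remains to verify the formula for $\omega=\omega_{\psi_f}$. By Theorem~\ref{tvector}, $\omega(a)=\int_{\Lambda^\infty}P(a)\,dM_{\psi_f}$; under the isomorphism $C^*(\Lambda)\simeq C^*(\mathcal{G})$ of Corollary~3.5 in \cite{KP} we have $t_\lambda t_\gamma^*\mapsto 1_{Z(\lambda,\gamma)}$, and $P$ restricts functions to $\mathcal{G}^{(0)}\simeq\Lambda^\infty$. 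Since $Z(\lambda,\gamma)$ meets $\mathcal{G}^{(0)}$ only when $d(\lambda)=d(\gamma)$ and $Z(\lambda)\cap Z(\gamma)\neq\emptyset$, which for paths of equal degree forces $\lambda=\gamma$, we obtain $P(t_\lambda t_\gamma^*)=\delta_{\lambda,\gamma}1_{Z(\lambda)}$, and therefore $\omega(t_\lambda t_\gamma^*)=\delta_{\lambda,\gamma}M_{\psi_f}(Z(\lambda))=\delta_{\lambda,\gamma}e^{-\beta r\cdot d(\lambda)}\psi_{s(\lambda)}$.

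The argument is largely bookkeeping, and the only step calling for genuine care is the bijectivity analysis: one must line up the index set $\mathcal{C}_r(\beta)$ precisely with the components produced by Proposition~\ref{pdecomp}, and invoke the $F$-independence of Corollary~\ref{cinde} so that that proposition's uniqueness statement is actually available for the two competing decompositions. I do not expect any other real obstacle.
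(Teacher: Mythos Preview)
Your proof is correct and follows essentially the same approach as the paper: both reduce the problem to composing Theorem~\ref{tvector} with the decomposition of Proposition~\ref{pdecomp}, and both compute the formula via $P(t_\lambda t_\gamma^*)=\delta_{\lambda,\gamma}1_{Z(\lambda)}$ and the definition of $M_\psi$. The paper's proof is simply terser, leaving the well-definedness, surjectivity, and injectivity of $f\mapsto\psi_f$ implicit in the phrase ``follows from Proposition~\ref{pdecomp} and the definition of $\mathcal{C}_r(\beta)$,'' whereas you spell these out explicitly (including the appeal to Corollary~\ref{cinde} for $F$-independence).
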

\begin{proof}
Let $\omega $ be a gauge-invariant $\beta$-KMS state for $\alpha^{r}$ and $\psi$ be the corresponding unique $\beta$-harmonic vector for $\alpha^{r}$ given by Theorem \ref{tvector}, then for any $\lambda , \gamma \in \Lambda$:
$$
\omega(t_{\lambda}t_{\gamma}^{*})=\int_{\Lambda^{\infty}} P(t_{\lambda}t_{\gamma}^{*}) \ dM_{\psi}  = \delta_{\lambda, \gamma} M_{\psi} (Z(\lambda))
= \delta_{\lambda, \gamma} e^{-\beta r \cdot d(\lambda)} \psi_{s(\lambda)} .
$$
That it is an affine bijection follows from Proposition \ref{pdecomp} and the definition of $\mathcal{C}_{r}(\beta)$.
\end{proof}

\section{The non gauge-invariant KMS states}

We will now use Theorem \ref{t712} and the symmetries of the KMS-simplex to obtain a description of all the KMS states. The map $\psi \to M_{\psi}$ is an affine bijection from the $\beta$-harmonic vectors for $\alpha^{r}$ to the set of $e^{-\beta c_{r}}$-quasi-invariant measures, where $c_{r}(x,a,y)=a\cdot r$. So the extreme points of the simplex $\tilde{\Delta}$ of $e^{-\beta c_{r}}$-quasi-invariant probability measures are the measures $M_{C}:=M_{x^{C}}$, where $C \in \mathcal{C}_{r}(\beta)$. To use Theorem \ref{tmain} we first have to analyse the paths in $\Lambda^{\infty}$. For a subset $S \subseteq \Lambda^{0}$ we say that a path $x \in \Lambda^{\infty}$ \emph{eventually lies in $S$} if there exists a $n \in \mathbb{N}^{k}$ such that:
$$
r(\sigma^{m}(x)) \in S \qquad \forall m \geq n .
$$
This concept proves important for describing the measures $M_{C}$, $C \in \mathcal{C}_{r}(\beta)$.

\begin{lemma}
Let $\Lambda$ be a finite $k$-graph without sources. For any component $D$ the set:
$$
N_{D}:=\{x \in \Lambda^{\infty} \ | \    x \text{ eventually lies in } D \}
$$
is a Borel set in $\Lambda^{\infty}$. For any $r \in \mathbb{R}^{k}$, $\beta \in \mathbb{R}$ and $D \in \mathcal{C}_{r}(\beta)$ we have $M_{D}(N_{D})=1$.
\end{lemma}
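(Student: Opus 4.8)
The plan is to treat the two assertions separately. For measurability, note that $r(\sigma^{m}(x))=x(m,m)$, so that
$$
N_{D}=\bigcup_{n\in \mathbb{N}^{k}} \ \bigcap_{m\geq n} \ \{ x\in \Lambda^{\infty}\ | \ x(m,m)\in D\} .
$$
For each fixed $m$ one has $\{ x\ | \ x(m,m)\in D\}=\bigcup_{\lambda\in \Lambda^{m},\ s(\lambda)\in D}Z(\lambda)$, a finite union of basic clopen sets and hence clopen, and since $\mathbb{N}^{k}$ and each set $\{m\ | \ m\geq n\}$ are countable, $N_{D}$ is Borel.

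For the measure assertion I would argue by a $0$--$1$ dichotomy. First, $N_{D}$ is an invariant Borel set: if $x,y\in \Lambda^{\infty}$ satisfy $\sigma^{m}(x)=\sigma^{n}(y)$ for some $m,n\in \mathbb{N}^{k}$, then $x$ eventually lies in $D$ if and only if $y$ does, because this property depends only on the common tail, and this says precisely that $r(s^{-1}(N_{D}))=N_{D}=s(r^{-1}(N_{D}))$. Since $D\in \mathcal{C}_{r}(\beta)$, the measure $M_{D}=M_{x^{D}}$ is an extreme point of $\tilde{\Delta}$; as the restriction of a quasi-invariant measure to an invariant Borel set is again quasi-invariant (see the proof of Theorem \ref{tmain}), an inequality $0<M_{D}(N_{D})<1$ would exhibit $M_{D}$ as a nontrivial convex combination of two elements of $\tilde{\Delta}$, contradicting extremality. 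Hence $M_{D}(N_{D})\in \{0,1\}$, and it suffices to prove $M_{D}(N_{D})>0$.

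To get positivity, fix a vertex $v\in D$, write $\mathbf{1}=(1,\dots,1)\in \mathbb{N}^{k}$, and set
$$
S_{v}:=\{ x\in \Lambda^{\infty}\ | \ x(0,0)=v \text{ and } x(m,m)\in D \text{ for all } m\in \mathbb{N}^{k}\}\subseteq N_{D} .
$$
The crucial structural fact is that a path $\mu$ with $r(\mu)\in D$ and $s(\mu)\in D$ has all of its intermediate vertices in $D$: if $s(\mu)\leq u\leq r(\mu)$, strong connectedness of the component $D$ forces $u\in D$. Using this, one checks that $S_{v}=\bigcap_{n\in \mathbb{N}}\{ x\ | \ x(0,n\mathbf{1})\in v\Lambda^{n\mathbf{1}}D\}$ as a decreasing intersection, so continuity of $M_{D}$ from above gives
$$
M_{D}(S_{v})=\lim_{n\to\infty}\ \sum_{\mu\in v\Lambda^{n\mathbf{1}}D}M_{D}(Z(\mu))=\lim_{n\to\infty}\ e^{-\beta r\cdot n\mathbf{1}}\sum_{w\in D}A^{n\mathbf{1}}(v,w)\, (x^{D})_{w} .
$$
Now $A^{n\mathbf{1}}(v,w)=|v\Lambda^{n\mathbf{1}}w|=0$ whenever $v\in D$ and $w\in \overline{D}\setminus D$, since $v\Lambda w=\emptyset$ for such $v,w$ (as noted in the proof of Lemma \ref{lC}), while $(x^{D})_{w}=0$ for $w\notin \overline{D}$; hence $\sum_{w\in D}A^{n\mathbf{1}}(v,w)(x^{D})_{w}=(A^{n\mathbf{1}}x^{D})_{v}$. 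By Lemma \ref{l67} we have $A_{i}x^{D}=\rho(A_{i}^{D})x^{D}$, so $A^{n\mathbf{1}}x^{D}=\prod_{i=1}^{k}\rho(A_{i}^{D})^{n}\,x^{D}$, and since $\rho(A_{i}^{D})=e^{\beta r_{i}}$ because $D\in \mathcal{C}_{r}(\beta)$ we obtain $(A^{n\mathbf{1}}x^{D})_{v}=e^{\beta r\cdot n\mathbf{1}}(x^{D})_{v}$. Therefore $M_{D}(S_{v})=(x^{D})_{v}$, which is strictly positive since $v\in D\subseteq \overline{D}$ (Lemma \ref{lC}). Thus $M_{D}(N_{D})\geq (x^{D})_{v}>0$, and with the dichotomy above this yields $M_{D}(N_{D})=1$.

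The point requiring the most care is the bookkeeping of path directions. One must verify that $S_{v}$ really is contained in $N_{D}$ (note that $N_{D}$ also contains many paths whose starting vertex $x(0,0)$ lies strictly below $D$, since increasing $m$ moves toward the sources of the path), and that the identification of $S_{v}$ with the decreasing intersection of the cylinder-unions $\bigcup_{\mu\in v\Lambda^{n\mathbf{1}}D}Z(\mu)$ is correct; once the conventions for traversing paths are fixed, everything else reduces to the eigenvector identity of Lemma \ref{l67} and the description of $\overline{D}$ in Lemma \ref{lC}.
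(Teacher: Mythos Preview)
Your argument is correct, and in fact more direct than the paper's on both counts. For measurability the paper takes a detour: it first shows that for any component $C$ the set $N_{\overline{C}}$ is actually \emph{closed}, and then obtains the Borel set $N_{D}$ from the identity
\[
N_{D}=N_{\overline{D}}\setminus\Big(\bigcup_{C\subseteq \overline{D}\setminus D}N_{\overline{C}}\Big).
\]
Your countable Boolean combination of clopen cylinder sets is shorter and avoids this decomposition entirely. For the measure statement both arguments rest on the same extremality/$0$--$1$ dichotomy, but they apply it to different sets: the paper first gets $M_{D}(N_{\overline{D}})=1$ for free from the fact that $x^{D}$ is supported on $\overline{D}$, and then uses the dichotomy on each $N_{\overline{C}}$ with $C\subseteq \overline{D}\setminus D$ to rule out $M_{D}(N_{\overline{C}})\in\ ]0,1[$ (and $M_{D}(N_{\overline{C}})=1$ is excluded since $M_{D}(Z(v))>0$ for $v\in D$). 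You instead apply the dichotomy directly to $N_{D}$ and supply positivity via the explicit eigenvector computation $M_{D}(S_{v})=(x^{D})_{v}$. The trade-off is that the paper's route yields the auxiliary closedness of $N_{\overline{C}}$ and the decomposition of $N_{D}$ as by-products, while your route is self-contained and gives a concrete lower bound $M_{D}(N_{D})\geq (x^{D})_{v}$ without ever introducing the sets $N_{\overline{C}}$.
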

\begin{proof}
We first want to argue that:
$$
N_{\overline{C}}:=\{x \in \Lambda^{\infty} \ | \    x \text{ eventually lies in } \overline{C} \}
$$
is a closed set for all components $C$. So let $y \in \Lambda^{\infty}\setminus N_{\overline{C}}$. Then there is a $m \in \mathbb{N}^{k}$ such that $r(\sigma^{m}(y)) \notin \overline{C}$, and we set $\lambda: =y(0,m)\in \Lambda $. We claim that $Z(\lambda) \cap N_{\overline{C}} = \emptyset$. To see this, assume $z \in Z(\lambda) \cap N_{\overline{C}}$, then there exists a $N\geq m$ such that $r(\sigma^{N}(z)) \in \overline{C}$, however then $z(m ,N )$ is a path with $r(z(m ,N ))=s(\lambda) = r(\sigma^{m}(y))$ and $s(z(m ,N ))=r((\sigma^{N}(z)) \in \overline{C}$, in contradiction to the fact that $r(\sigma^{m}(y)) \notin \overline{C}$. To prove that $N_{D}$ is Borel, it suffices to prove that:
$$
N_{D} = N_{\overline{D}} \setminus \left( \bigcup_{\text{components } C \subseteq \overline{D} \setminus D} N_{\overline{C}}  \right) .
$$
If $N_{D} \cap N_{\overline{C}} \neq \emptyset$ for some $C \subseteq \overline{D}$, we must have $\overline{C} \cap D \neq \emptyset$, which implies that $D=C$. This proves $"\subseteq"$. For a path $z$ in the right hand side, numerate the finite collection of components $C_{1}, C_{2}, \dots , C_{l} \subseteq \overline{D} \setminus D$, and let $N_{1}, \dots , N_{l} \in \mathbb{N}^{k}$ be numbers such that $r(\sigma^{N_{i}}(z)) \notin \overline{C_{i}}$. It then follows that $r(\sigma^{N}(z)) \notin \bigcup_{i=1}^{l} \overline{C_{i}}$ for all $N\geq  N_{1}\vee \cdots \vee N_{l}$. There is a $N\geq  N_{1}\vee \cdots \vee N_{l}$ such that $r(\sigma^{m}(z))\in \overline{D}$ for all $m \geq N$, so $r(\sigma^{m}(z))\in D$ for all $m \geq N$, and hence $z \in N_{D}$.

Let $r \in \mathbb{R}^{k}$, $\beta \in \mathbb{R}$ and $D \in \mathcal{C}_{r}(\beta)$. We will first prove that $M_{D}(N_{\overline{D}})=1$. It is enough to prove that $M_{D}(Z(\lambda))=0$ for $\lambda \in \Lambda$ with $s(\lambda) \notin \overline{D}$. By definition:
$$
M_{D}(Z(\lambda)) = e^{-\beta r \cdot d(\lambda)}x^{D}_{s(\lambda)}
$$
however $x^{D}$ is supported on $\overline{D}$, so $M_{D}(Z(\lambda))=0$, proving that $M_{D}(N_{\overline{D}})=1$. Now assume for contradiction that $M_{D}(N_{\overline{C}}) \neq 0$ for a $C \subseteq \overline{D} \setminus D$. If $M_{D}(N_{\overline{C}})=1$ then $M_{D}(Z(v))=0$ for $v \in D$, since then $Z(v) \cap N_{\overline{C}}=\emptyset$, however $M_{D}(Z(v))=x^{D}_{v} >0$, so $M_{D}(N_{\overline{C}}) \in ]0,1[$. Notice that clearly $N_{\overline{C}}$ is invariant in the sense that $r(s^{-1}(N_{\overline{C}})) = s(r^{-1}(N_{\overline{C}} ))= N_{\overline{C}}$, and hence as noted earlier we can decompose $M_{D}$ as a non-trivial convex combination of two $e^{-\beta c_{r}}$-quasi-invariant measures, contradicting that $M_{D}$ is extremal. So $M_{D}(N_{\overline{C}})=0$ which proves $M_{D}(N_{D})=1$.
\end{proof}

Given a component $D \in \mathcal{C}_{r}(\beta)$ consider the graph $\Lambda_{D}$ which is a strongly connected $k$-graph. Hence as in \cite{aHLRS} it has a Periodicity-group $\text{Per}(\Lambda_{D}) \subseteq \mathbb{Z}^{k}$ associated with it. We denote this subgroup of $\mathbb{Z}^{k}$ as $\text{Per}(D):=\text{Per}(\Lambda_{D})$, and remind the reader that:
$$
\text{Per}(D) = \{ m-n \ | \ m,n \in \mathbb{N}^{k} , \ \sigma^{m}(x)=\sigma^{n}(x) \text{ for all } x \in \Lambda_{D}^{\infty} \}
$$ 
c.f. Proposition 5.2 in \cite{aHLRS}. We let $\Phi:\mathcal{G} \to \mathbb{Z}^{k}$ denote the map $(x,a,y)\to a$. We can now obtain the entire description of KMS states. When $\Lambda$ is strongly connected this description follows from Theorem 7.1 in \cite{aHLRS}.

\begin{thm} \label{t82}
Let $\Lambda$ be a finite $k$-graph without sources and $r \in \mathbb{R}^{k}$ and $\beta \in \mathbb{R}$. There is a bijection from the pairs $(C, \xi)$ consisting of a $C \in \mathcal{C}_{r}(\beta)$ and a $\xi \in \widehat{\text{Per}(C)}$ to the set of extremal $\beta$-KMS states for $\alpha^{r}$ given by:
$$
(C, \xi ) \to \omega_{C, \xi}
$$
where:
$$
\omega_{C, \xi}(f) = \int_{X(\text{Per}(C))} \sum_{g \in \mathcal{G}_{x}^{x}} f(g) \xi(\Phi(g)) \ d M_{C}(x) \qquad \forall f \in C_{c}(\mathcal{G}) .
$$
\end{thm}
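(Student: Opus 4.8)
The plan is to obtain the theorem by specialising the machinery of Section 5 to the path groupoid $\mathcal{G}$ of $\Lambda$, taken with $A=\mathbb{Z}^k$ and $\Phi(x,a,y)=a$, and then to identify the abstract subgroup $B$ produced by Theorem \ref{tmain} with $\text{Per}(C)$. Essentially all of the analytic work is already contained in Theorems \ref{t34} and \ref{tmain}, in Lemma \ref{l24}, and in the Lemma preceding this theorem; the only genuinely new input is the description of $B$.

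First I would assemble the statement from the decomposition $\partial\Delta=\bigsqcup_{\mu\in\partial\tilde\Delta}\partial\Delta_\mu$ furnished by part (3) of Lemma \ref{l24}, the union being disjoint because a state in $\partial\Delta_\mu$ restricts to $\mu$ on $C(\Lambda^\infty)$. By the affine bijection $\psi\mapsto M_\psi$ recorded above, $\partial\tilde\Delta=\{M_C:C\in\mathcal{C}_r(\beta)\}$, distinct components giving distinct measures, so $\partial\Delta=\bigsqcup_{C\in\mathcal{C}_r(\beta)}\partial\Delta_{M_C}$. Fixing $C\in\mathcal{C}_r(\beta)$ and applying Theorem \ref{tmain} with $\mu=M_C$ yields a subgroup $B_C\subseteq\mathbb{Z}^k$ for which $\Delta_{M_C}$ is a simplex and the map \eqref{homeo}, namely $\widehat{B_C}\ni\xi\mapsto\bigl[f\mapsto\int_{X(B_C)}\sum_{g\in\mathcal{G}_x^x}f(g)\xi(\Phi(g))\,dM_C(x)\bigr]$, is a homeomorphism onto $\partial\Delta_{M_C}$. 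Granting $B_C=\text{Per}(C)$, the assignment $(C,\xi)\mapsto\omega_{C,\xi}$ is then the disjoint union over $C\in\mathcal{C}_r(\beta)$ of these bijections, hence a bijection onto $\partial\Delta$; moreover $X(B_C)=X(\text{Per}(C))$ and $\Phi(g)\in\text{Per}(C)$ whenever $g\in\mathcal{G}_x^x$ and $x\in X(\text{Per}(C))$, so $\xi(\Phi(g))$ is defined for $\xi\in\widehat{\text{Per}(C)}$ and the displayed formula for $\omega_{C,\xi}$ is exactly \eqref{homeo} with $\mu=M_C$.

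It remains to prove $B_C=\text{Per}(C)$. By part (1) of Theorem \ref{tmain}, $B_C$ is the unique subgroup $D\subseteq\mathbb{Z}^k$ with $M_C(X(D))=1$, where $X(D)=\{x\in\Lambda^\infty:\Phi(\mathcal{G}_x^x)=D\}$ and $\Phi(\mathcal{G}_x^x)=\{m-n:m,n\in\mathbb{N}^k,\ \sigma^m(x)=\sigma^n(x)\}$, so it suffices to show $M_C(X(\text{Per}(C)))=1$. The map $x\mapsto\Phi(\mathcal{G}_x^x)$ is constant along $\mathcal{G}$-orbits, since for $h\in\mathcal{G}_x$ one has $\mathcal{G}_{r(h)}^{r(h)}=h\mathcal{G}_x^x h^{-1}$ and $\Phi$ takes values in an abelian group; hence $X(\text{Per}(C))$ is a $\mathcal{G}$-invariant Borel set, its $M_C$-measure is $0$ or $1$ by extremality of $M_C$ in $\tilde\Delta$, and it is enough to exclude $0$. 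For that I would restrict $M_C$ to the set $\{x\in\Lambda^\infty:r(x)\in C\}$, which has positive $M_C$-measure $\sum_{v\in C}x^C_v$: an infinite path starting at a vertex of $C$ either stays in $\Lambda_C^\infty$ or eventually lands in $\Lambda^0\setminus\overline C$, and paths of the latter kind are $M_C$-null because $x^C$ vanishes off $\overline C$ (this is the mechanism behind $M_C(N_C)=1$ in the preceding Lemma), so up to a null set this restricted measure is supported on $\Lambda_C^\infty$. There, by Lemma \ref{l67} the vector $x^C|_C$ is a positive multiple of the common Perron--Frobenius eigenvector of the matrices $A_i^C$, and since $\beta r=(\ln\rho(A_1^C),\dots,\ln\rho(A_k^C))$ the restricted measure coincides, after normalisation, on cylinders of $\Lambda_C$ with the measure on $\Lambda_C^\infty$ attached to the preferred dynamics of the strongly connected $k$-graph $\Lambda_C$ in \cite{aHLRS}. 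For that measure the analysis of strongly connected graphs there --- precisely the fact, recalled in the Observation following Theorem \ref{tmain}, that the associated subgroup is $\text{Per}(\Lambda_C)$ --- gives that $M_C$-a.e.\ such path $y$ has $\Phi(\mathcal{G}_y^y)=\text{Per}(\Lambda_C)=\text{Per}(C)$; hence $X(\text{Per}(C))$ has positive $M_C$-measure and is therefore conull, as desired. The step I expect to be the main obstacle is this reduction to the strongly connected case: matching the restriction of $M_C$ to $\{x:r(x)\in C\}$ with the canonical measure of $\Lambda_C$ and importing the known description of the path isotropy groups there, while keeping track of the fact that $M_C$ itself is a measure on all of $\Lambda^\infty$.
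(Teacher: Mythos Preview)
Your approach is essentially the same as the paper's: both reduce the identification $B_C=\text{Per}(C)$ to the strongly connected graph $\Lambda_C$ and invoke results from \cite{aHLRS}. The paper works with the set $N_C$ of paths eventually lying in $C$ (its preceding Lemma shows $M_C(N_C)=1$), proves $\text{Per}(C)\subseteq B_C$ directly from the definition of $\text{Per}$, and for the reverse containment cites Proposition~8.2 of \cite{aHLRS} together with an explicit cylinder-cover argument to transfer the measure-zero statement from the canonical measure on $\Lambda_C^\infty$ to $M_C$. Your restriction to $\{x:r(x)\in C\}$, which up to null sets is $\Lambda_C^\infty$ for the reason you give, followed by a direct appeal to the Observation (hence to Lemma~12.1 of \cite{aHLRS}) and the $0$--$1$ dichotomy from extremality, is a compressed version of the same reduction.

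One omission: Lemma~\ref{l24} and Theorems~\ref{t34} and~\ref{tmain} all require $\beta\in\mathbb{R}\setminus\{0\}$, so your argument as written only covers that case. The paper handles $\beta=0$ separately at the end by observing that the $0$-KMS states for $\alpha^r$ coincide with the $1$-KMS states for the trivial one-parameter group $\alpha^0$, and that $\mathcal{C}_r(0)=\mathcal{C}_0(1)$, which reduces to the case already treated.
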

\begin{proof}
To use Theorem \ref{tmain} we assume for now that $\beta \neq 0$. We will prove that for a $D \in \mathcal{C}_{r}(\beta)$, the unique subgroup of $\mathbb{Z}^{k}$ described in $(1)$ in Theorem \ref{tmain} for the measure $M_{D}$ is $\text{Per}(D)$. Assume that $y \in N_{D}$, then there is a $m\in \mathbb{N}^{k}$ such that $r(\sigma^{l}(y)) \in D$ for all $l \geq m$, and hence $\sigma^{m}(y)$ can be considered as an infinite path in the graph $\Lambda_{D}$. It follows that for $n_{1}-n_{2} \in \text{Per}(D)$ we have:
$$
\sigma^{n_{1}+m}(y) = \sigma^{n_{1}}(\sigma^{m}(y)) =  \sigma^{n_{2}}(\sigma^{m}(y)) = \sigma^{n_{2}+m}(y)
$$
hence $(y,n_{1}-n_{2},y)\in \mathcal{G}_{y}^{y}$. So:
$$
1=M_{D}(N_{D}) = M_{D}(\{x \in \Lambda^{\infty} \ | \ \{x\}\times\text{Per}(D) \times \{x\}\subseteq \mathcal{G}_{x}^{x}  \}) 
$$
and hence the subgroup $B$ from Theorem \ref{tmain} satisfies $\text{Per}(D) \subseteq B$. Assume for a contradiction that $\text{Per}(D) \subsetneq B$, then for $l \in B \setminus \text{Per}(D)$ we have: 
$$
M_{D}(\{x \in N_{D}\ |\ (x,l,x) \in \mathcal{G}_{x}^{x}\})=1 .
$$
Since $M_{D}(Z(v))=x^{D}_{v}>0$ for a $v \in D$ and since:
$$
\{x \in vN_{D} \ |\ (x,l,x) \in \mathcal{G}_{x}^{x}\} \subseteq \bigcup_{n,m \in \mathbb{N}^{k}, \ n-m=l} \{x \in vN_{D}\ |\ \sigma^{n}(x)=\sigma^{m}(x)\}
$$
there must be a $n_{1}, n_{2} \in \mathbb{N}^{k}$ with $M_{D}(\{x \in vN_{D} \ |\ \sigma^{n_{1}}(x)=\sigma^{n_{2}}(x)\})>0$ and $n_{1}-n_{2}=l$. Now consider the measure $M$ defined on the strongly connected graph $\Lambda_{D}$ as in \cite{aHLRS}, since $l \notin \text{Per}(D)$ we have by Proposition 8.2 in \cite{aHLRS} that:
$$
M(\{ x \in v\Lambda_{D}^{\infty} \ | \ \sigma^{n_{1}}(x)=\sigma^{n_{2}}(x)\})=0 .
$$
Since this set is compact we can choose an arbitrary $\varepsilon >0$ and find a finite number of paths $\delta_{i} \in \Lambda_{D}$, $i=1, \dots, n$ such that letting $Z_{D}(\delta_{i})=\{x \in \Lambda_{D}^{\infty} \ | \ x(0, d(\delta_{i}))=\delta_{i}\}$ for each $i$ we have $Z_{D}(\delta_{i})\cap Z_{D}(\delta_{j})=\emptyset$ for $i \neq j$ and:
$$
\{ x \in v\Lambda_{D}^{\infty} \ | \ \sigma^{n_{1}}(x)=\sigma^{n_{2}}(x)\} \subseteq \bigcup_{i=1}^{n} Z_{D}(\delta_{i}) \quad , \quad \sum_{i=1}^{n} M(Z_{D}(\delta_{i}))\leq \varepsilon .
$$
The paths $\delta_{i} \in \Lambda_{D}$ can be considered as paths in $\Lambda$, and hence denoting by $Z(\delta_{i})=\{x \in \Lambda^{\infty} \ | \ x(0, d(\delta_{i}))=\delta_{i}\}$ it is straightforward to check that:
$$
\{x \in vN_{D} \ |\ \sigma^{n_{1}}(x)=\sigma^{n_{2}}(x)\} \subseteq \bigcup_{i=1}^{n} Z(\delta_{i}) .
$$ 
By definition of $M_{D}$ and $x^{D}$ there is a $c\in ]0,1]$ such that $x^{D}|_{D}= cx$ where $x$ is the unimodular Perron-Frobenious eigenvector for $A_{F}^{D}$. Since $A_{i}x^{D}=\rho(A_{i}^{D})x^{D}$ it follows that $A_{i}^{D}(x^{D}|_{D})=\rho(A_{i}^{D})(x^{D}|_{D})$, so since $A_{1}^{D}, \dots, A_{k}^{D}$ are the vertex matrices for $\Lambda_{D}$, it follows that $x$ is the unimodular Perron-Frobenius eigenvector of $\Lambda_{D}$, c.f. Definition 4.4 in \cite{aHLRS}. So by definition of $M$ in Section 8 of \cite{aHLRS} we get:
\begin{align*}
M_{D}\left(\bigcup_{i=1}^{n} Z(\delta_{i})\right ) &\leq \sum_{i=1}^{n} M_{D}(Z(\delta_{i})) =  \sum_{i=1}^{n} e^{-\beta r \cdot d(\delta_{i})} x^{D}_{s(\delta_{i})} = c \sum_{i=1}^{n} e^{-\beta r \cdot d(\delta_{i})} x_{s(\delta_{i})} \\
&= c \sum_{i=1}^{n} M(Z_{D}(\delta_{i}))\leq c \varepsilon \leq \varepsilon
\end{align*}
since $\varepsilon$ was arbitrary, we reach our contradiction, and hence $B = \text{Per}(D)$. In the case where $\beta=0$ we notice that the $\beta$-KMS states for $\alpha^{r}$ are the same as the $1$-KMS states for $\alpha^{0}$, with $0 \in \mathbb{R}^{k}$, since $\alpha^{0}$ is the trivial one-parameter group. However $\mathcal{C}_{r}(0)=\mathcal{C}_{0}(1)$, so we also have a bijection in this case.
\end{proof}

\begin{remark}
In our setting the $C^{*}$-algebra $C^{*}(\Lambda)$ is simple if and only if $\Lambda$ is cofinal and has no local periodicity, c.f.  Theorem 3.1 in \cite{RS}. Since $\Lambda$ has no sources it has to contain some positive component $C$, and since it is cofinal $C$ has to be the only positive component and it has to satisfy $\hat{C}=C$. Since $\Lambda$ has no local periodicity it follows that $\text{Per}(C)$ is trivial. To see that $C$ is a harmonic component, assume $D$ is another component and $i \in \{1,2, \dots, k\}$. Since $C$ is the only positive component, there exists a $n\in \mathbb{N}^{k}$ such that $\Lambda^{n}=\Lambda^{n}C$. Setting $N:=|D\Lambda^{n}|$ and letting $l \in \mathbb{N}$ be arbitrary, it follows for each $v,w \in D$ and fixed $\gamma \in w\Lambda^{n}$ that the map: 
$$
v\Lambda^{le_{i}} w \ni \lambda \xrightarrow{\varphi} (\lambda \gamma)(n, n+le_{i}) \in C\Lambda^{le_{i}}C
$$
has at most $N$ points in $\varphi^{-1}(\{\nu\})$ for each $\nu \in C\Lambda^{le_{i}}C$, so $|v\Lambda^{le_{i}} w| \leq N |\varphi \left( v\Lambda^{le_{i}} w  \right)| \leq N |C|^{2} \cdot \lVert (A_{i}^{C})^{l} \rVert_{\max}$. It follows that:
$$
\lVert (A_{i}^{D})^{l}\rVert_{F} \leq |\Lambda^{0}| \lVert (A_{i}^{D})^{l}\rVert_{\max} \leq  |\Lambda^{0}| N |C|^{2} \cdot \lVert (A_{i}^{C})^{l} \rVert_{F}
$$
By Gelfand's formula $\rho(A_{i}^{D}) \leq \rho(A_{i}^{C})$,  so since $D$ is not positive we conclude that $C$ is harmonic, and Theorem \ref{t82} then implies that there is exactly one $\beta$-KMS state for $\alpha^{r}$ if $r=\frac{1}{\beta} (\ln(A^{C}_{1}), \dots , \ln(A^{C}_{k}))$ and no $\beta$-KMS states for $\alpha^{r}$ for any other choices of $\beta$.
\end{remark}

\section{Acknowledgement}
The bulk of this work was done while visiting Astrid an Huef and Iain Raeburn at the University of Otago for a longer period of time, and the author is immensely grateful for the enlightening discussions and for the great hospitality shown to him by the entire O.A. group. This stay was primarily financed by the grant \emph{6161-00012B Eliteforsk legat} from the Danish Ministry of Higher Education and Science. The author also thanks Jean Renault for sharing insight that led to an improvement of Definition \ref{d2}, and lastly the author thanks Klaus Thomsen for supervision.

\end{document}